\newtheorem{introductiontheorem}{Theorem}
\newtheorem{theorem}{Theorem}[section]
\newtheorem{corollary}[theorem]{Corollary}
\newtheorem{lemma}[theorem]{Lemma}
\newtheorem{proposition}[theorem]{Proposition}
\newtheorem{remark}[theorem]{Remark}
\newtheorem{definition}[theorem]{Definition}
\newtheorem{example}[theorem]{Example}
\newtheorem{setup}[theorem]{Setup}
\def\red{\operatorname{red}}
\def\dim{\operatorname{dim}}
\def\codim{\operatorname{codim}}
\def\deg{\operatorname{deg}}
\def\det{\operatorname{det}}
\def\NE{\operatorname{NE}}
\def\NEb{\overline{\operatorname{NE}}}
\def\Pic{\operatorname{Pic}}
\def\Eff{\operatorname{Eff}}
\def\Mov{\operatorname{Mov}}
\def\Movb{\overline{\operatorname{Mov}}}
\def\Nef{\operatorname{Nef}}
\def\PGL{\operatorname{PGL}}
\def\det{\operatorname{det}}
\def\disc{\operatorname{disc}}
\def\res{\operatorname{res}}
\def\mod{\operatorname{mod}}
\newcommand{\x}{\chi}
\newcommand{\OO}{\mathcal{O}}
\newcommand{\II}{\mathcal{I}}
\renewcommand{\AA}{\mathbb{A}}
\newcommand{\NN}{\mathbb{N}}
\newcommand{\Ll}{\mathcal{L}}
\newcommand{\ZZ}{\mathbb{Z}}
\newcommand{\QQ}{\mathbb{Q}}
\newcommand{\RR}{\mathbb{R}}
\newcommand{\CC}{\mathbb{C}}
\newcommand{\PP}{\mathbb{P}}
\newcommand{\isom}{\cong}
\newcommand{\defeq}{\vcentcolon=}
\newcommand{\map}{\smash{\xymatrix@C=0.5cm@M=1.5pt{ \ar[r]& }}}
\newcommand{\rmap}{\smash{\xymatrix@C=0.5cm@M=1.5pt{ \ar@{-->}[r]& }}}
\newcommand{\psmap}{\smash{\xymatrix@C=0.5cm@M=1.5pt{ \ar@{..>}[r]& }}}
\newcommand{\overbar}[1]{\mkern 17mu\overline{\mkern-20mu #1 \mkern-20mu}\mkern 20mu}
\newcommand{\link}[1]{\hspace{-.04cm}\xleftrightarrow{\,\scalebox{0.5}{\emph{($#1$)}}\,}\hspace{-.04cm}}
\newcommand{\myitem}[1]{%
	\item[#1]\protected@edef\@currentlabel{\emph{#1}}%
}
\author{Tiago Duarte Guerreiro, Sokratis Zikas}
\title{On Mori dreamness of blowups along space curves}
\date{\today}
\begin{document}
	
	\maketitle

	\begin{abstract}
		We study the problem of determining when the blowup $X \to \mathbb{P}^3$ along a smooth space curve $C$ is a Mori Dream Space. We obtain sufficient conditions, as well obstructions to the Mori dreamness of $X$ based on the external geometry of $C$. We furthermore find infinitely many pairs $(g,d)$ such that the corresponding Hilbert schemes $H_{g,d}^S$ admit components whose general element has these obstructions. As a consequence we show that Mori dreamness is not an open property in flat families and exhibit various degenerational pathologies.
	\end{abstract}

	
	\section{Introduction}	
	Mori Dream Spaces are a special kind of varieties introduced by Hu and Keel \cite{HuKeel} that behave optimally with respect to the Minimal Model Program. 
	From a birational point of view, they are special in the sense that one can run an MMP for any divisor, not just the canonical divisor, and the steps boil down to combinatorial data provided by the Mori chamber decomposition of its several cones.
	Typical examples of Mori Dream Spaces include toric and, more generally, Fano-type varieties.

	Despite their optimal behaviour with respect to the MMP, the property of being a Mori Dream Space, or \emph{Mori dreamness}, is not preserved under common operations such as blowups or taking hyperplane sections (see \cite{Mukai,castravettevelev} and \cite{jow2011lefschetz,ottem2015birational, denisi2024birational});
	\cite{CastravetMDSblowups} provides and excellent survey to the topic.
	Moreover the behaviour in families is not optimal either, as it is neither an open nor a closed property:
	for example Mukai \cite{Mukai}  shows that the blowup $X$ of $\PP^3$ along $9$ very general points is not a Mori Dream Space;
	however, by results of Castravet and Tevelev \cite{castravettevelev}, specializing these points to lie on a twisted cubic curve, $X$ becomes a Mori Dream Space, that is Mori dreamness is not an open property;
	we can then further degenerate to the example of Hasset and Tschinkel \cite{HassetTschinkelMDS}---$9$ points on the intersection of two plane cubic curves---so that $X$ is again not a Mori Dream Space, showing that Mori dreamness is not a closed property either.

	Since a normal $\QQ$-factorial variety with finitely generated Picard group of rank $1$ is trivially a Mori Dream Space, the problem becomes interesting starting from Picard rank $2$.  
	Already there the picture is wildly open though, as there is no clear classification even for smooth rational surfaces (see \cite{testa,zhouMDSWPS}). 
	In this paper we focus on the problem of determining Mori dreamness of the blowup of $\PP^3$ along a smooth curve $C$, based on the external geometry of $C$.
	
	The first results of this kind were those of Blanc and Lamy \cite{WeakFanos}: 
	motivated by the problem of constructing Sarkisov links, they give a complete list of pairs $(g,d)$ so that the blowup along a general element $C \in H_{g,d}^S$---the Hilbert scheme of smooth connected curves of genus $g$ and degree $d$ in $\PP^3$---is weak Fano, hence a Mori Dream Space.
	In this spirit, we show that the blowup along complete and almost complete intersection curves or curves in low degree surfaces is always a Mori Dream Space (see Propositions \ref{prop:aciMDS} and  \ref{prop:lowDegreeMDS}).
	In both cases we exhibit infinitely many pairs $(g,d)$ so that such curves span irreducible components of $H_{g,d}^S$, sometimes even several.
	
	Curiously the Hilbert schemes considered in \cite{WeakFanos} are all irreducible, with the exception of $(g,d)=(14,11)$.
	Even in this exceptional case, their result holds true for all components of $H_{14,11}^S$.
	This seems to be a coincidence due to the low numerics of these curves as, in Example \ref{ex:largeDegQuartic}, we show that $H_{141,35}^S$ admits two different components:
	general elements in one give rise to a Mori Dream Space, while in the other not.
	
	On the negative side of the story, to the best of our knowledge, prior to this work the only example of a curve whose blowup is \emph{not} a Mori Dream Space is that of \cite{Kuronya}:
	there  K{\"u}ronya provides an example of a curve of genus $159$ and degree $36$ contained in a quartic surface with a round cone of effective divisors;
	given the large numerics of the curve, the roundness of the cone reflects to the non-rational generation of the effective cone $\Eff(X)$ of $X$ (see Figure \ref{fig:NefQuartic} for a schematic description).
	Here we systematize this behaviour: 
	\begin{introductiontheorem}[Theorem \ref{thm:nonMDSquartics}]\label{mainthm:1}
		Let $(g,d)$ be integers satisfying $8g<d^2$ and let $C$ be a general element in $H_{g,d}^S$ contained in a quartic surface. 
		Define $r = d^2-8(g-1)$ and suppose that the generalized Pell equations 
		\[
		x^2 - r y^2 = -8 \quad \text{ and } \quad x^2 - r y^2 = 0
		\]
		do not admit any integer solutions, and that either $d \geq 16$ or $64 - 8d + 2g -2 \leq 0$.
		Denote by $X \to \PP^3$ the blowup along $C$.
		
		Then $\overline{\Mov}(X)$ has an irrationally generated extremal ray;
		in particular, $X$ is not a Mori Dream Space. 
	\end{introductiontheorem}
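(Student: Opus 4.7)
The plan is to transport the question to the strict transform $\tilde S\subset X$ of a general quartic surface $S\supset C$, which will be a smooth K3 surface, and to exploit the theory of nef cones on K3 surfaces of small Picard rank.

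First, I would check that for $C$ general in the relevant component of $H_{g,d}^S$, a general quartic $S\supset C$ is smooth with $\Pic(S)=\ZZ H|_S\oplus\ZZ C$; this is a standard Noether--Lefschetz-type argument, based on the fact that a general member of a positive-dimensional family of quartics picks up no further Picard classes. In the basis $(H|_S,C)$ the intersection form has matrix $\bigl(\begin{smallmatrix} 4 & d \\ d & 2g-2\end{smallmatrix}\bigr)$, of determinant $-r$, and $\Pic(\tilde S)$ inherits this form via the isomorphism $\tilde S\to S$. The hypothesis $8g<d^2$ makes it hyperbolic.

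Next, I would show that $\Nef(\tilde S)$ has an irrationally generated extremal ray. Writing a class as $aH|_S+bC$ and substituting $x=4a+bd$, $y=b$ produces the identity
\[
4\bigl(4a^2+2abd+(2g-2)b^2\bigr)=x^2-ry^2,
\]
so $(-2)$-classes are in bijection with integer solutions of $x^2-ry^2=-8$ and isotropic classes with those of $x^2-ry^2=0$. The hypotheses exclude both: $\tilde S$ therefore carries no smooth rational curves, and by standard K3 theory $\Nef(\tilde S)$ agrees with the closure of the positive cone $\{D:D^2>0,\,D\cdot H>0\}$. Its extremal rays are isotropic, and since $r$ is not a perfect square they contain no rational points, hence are irrationally generated.

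Finally, I would transfer this to $X$ through the restriction $\rho\colon N^1(X)_\RR\to N^1(\tilde S)_\RR$, $\pi^*H\mapsto H|_S$, $E\mapsto C$, which is an isomorphism between two-dimensional spaces. The key claim is that under the numerical hypotheses the extremal rays of $\overline{\Mov}(X)$ correspond under $\rho$ to those of $\Nef(\tilde S)$. One direction is straightforward: a movable class on $X$ whose stable base locus does not contain $\tilde S$ restricts to a movable, hence nef, class on the surface $\tilde S$. The delicate converse---lifting extremal nef rays on $\tilde S$ to extremal rays of $\overline{\Mov}(X)$---is where the hypothesis $d\geq 16$ or $\tilde S^3=62-8d+2g\le 0$ enters, controlling the bigness of $\tilde S$ and ruling out a spurious wall of $\overline{\Mov}(X)$ strictly interior to $\rho^{-1}(\Nef(\tilde S))$. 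Granting this correspondence, an irrational extremal ray of $\Nef(\tilde S)$ pulls back to one of $\overline{\Mov}(X)$, so the latter is not rationally polyhedral and $X$ is not a Mori Dream Space. The main obstacle is exactly this lifting step, with the dichotomy $d\geq 16$ versus $\tilde S^3\leq 0$ corresponding to two regimes to be treated separately; once settled, the irrationality is immediate from the Pell-equation hypotheses via the lattice computation above.
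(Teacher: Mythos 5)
Your surface-side analysis is correct and coincides with the paper's: the identity $4(aH|_S+bC)^2=x^2-ry^2$ with $x=4a+bd$, $y=b$ is exactly Lemma \ref{lem:discriminant}, the exclusion of $(-2)$-classes and isotropic classes via the two Pell equations is Corollary \ref{cor:Pell}, and your reading of the numerical dichotomy ($d\geq 16$, i.e.\ $H|_S\cdot(4H|_S-C)\leq 0$, versus $\tilde S^3=(4H|_S-C)^2\leq 0$) as placing $\tilde S|_{\tilde S}$ outside the interior of $\NEb(\tilde S)$ is exactly Lemma \ref{lem:notInterior}. However, the proposal has a genuine gap at precisely the point you flag yourself: you write ``granting this correspondence'' for the claim that the extremal rays of $\Movb(X)$ correspond under restriction to those of $\Nef(\tilde S)$, and that correspondence \emph{is} the content of the theorem --- it is the paper's Proposition \ref{prop:conesExtremal}. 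The inclusion you do sketch, $\rho(\Movb(X))\subseteq\Nef(\tilde S)$, only bounds the movable cone from above: a priori $\Movb(X)$ could be a strictly smaller rational polyhedral cone ending at a ``spurious wall,'' in which case $X$ could perfectly well be a Mori Dream Space and the Pell-equation hypotheses would say nothing. So as written the argument does not prove the statement.

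For the record, the paper closes this gap in two steps. First it pins down the effective cone: if $\Eff(X)=\langle E,D\rangle$ with $D\equiv S-\epsilon H$, $\epsilon>0$, then since such an effective $D$ cannot contain $S$ in its stable base locus (Lemma \ref{lem:nefBs}), the restriction $D|_S$ is effective, and $S|_S=D|_S+\epsilon H|_S$ would force the ample class $H|_S$ onto the boundary of $\NEb(S)$, which is absurd; hence $\Eff(X)=\langle E,S\rangle$. Second comes the lifting you are missing: for $D\equiv\alpha H+\beta S$ whose restriction $D|_S$ lies in the interior of $\NEb(S)$, every curve $\gamma$ in the stable base locus of $D$ satisfies $D\cdot\gamma\leq 0$ and $H\cdot\gamma>0$, hence $S\cdot\gamma<0$ and $\gamma\subset S$; an interior class $D|_S$ is non-positive against only finitely many curves of $S$, so the stable base locus of $D$ contains no divisor and $D$ is movable. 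This shows $\Movb(X)$ is the full slice $\NEb(S)\cap H_\vdash$, whence the irrational ray. Two further remarks: the hypothesis ($d\geq16$ or $\tilde S^3\leq 0$) enters only once, through Lemma \ref{lem:notInterior}, so there are no ``two regimes to be treated separately'' in the lifting; and the restriction of a movable class to $\tilde S$ need not be movable on $\tilde S$ --- the paper only uses that it is pseudoeffective, which suffices here because $\NEb(\tilde S)=\Nef(\tilde S)$ in the absence of curves of non-positive self-intersection.
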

	Furthermore, in Examples \ref{ex:lowDegQuartic} and \ref{ex:largeDegQuartic} we provide infinitely many pairs $(g,d)$ so that a general element of some component of $H_{g,d}^S$ satisfies the hypotheses of Theorem \ref{mainthm:1}.
	This recasts the example of \cite{Kuronya} in a general setting, as the curves satistying the conditions imposed there span a divisor in our component constructed in Example \ref{ex:largeDegQuartic}.
	
	Our second obstruction to Mori dreamness utilizes heavily the, by now classical, theory of \emph{liaison} or \emph{linkage}.
	Starting from a curve $C'$ and performing a sufficiently general linkage $C' \link{S_1,S_2} C$, which we call \emph{super rigid linkage}, and blowing up $C$, we show that the strict transform of $C'$ spans an extremal ray of $\NE(X)$.
	If then $C'$ is \emph{very} general, i.e.\ not $\QQ$-canonical (see Definition \ref{def:Qcanonical}), then this ray is not contractible.
	More specifically we have:
	\begin{introductiontheorem}[Theorem \ref{thm:mainThmRigid} \& Corollary \ref{cor:degenToSemiample}]\label{mainthm:2}
		Let $(g',d')$ be a pair of integers different from $(3,4), (4,6)$, such that either $d' \geq 2g'-2$ or $\frac{3g'}{2} \leq d' \leq 2g'-2$.
		Let $n_1, n_2$ be sufficiently large integers and define $d = n_1n_2 -d'$ and $g = \frac{1}{2}(n_1 + n_2 - 4)(d - d') +g'$.
		
		Then there exists a subset $V_{g,d} \subset H_{g,d}^S$ that is a countable union of closed subsets, such that the blowup $X \to \PP^3$ along $C \in H_{g,d}^S$ is a Mori Dream Space if and only if $C \in V_{g,d}$;
		more specifically, if $C \not\in V_{g,d}$ then $X$ admits a nef but not semiample divisor.
	\end{introductiontheorem}
	\noindent This allows us to recover the non-openness of Mori dreamness in families.
	
	At this point it is worth comparing our results, with some similar flavoured results in the $K$-trivial world.
	In \cite[Corollary 1.3]{Matsushita} Matsushita proves that semiampleness is an open property for irreducible holomorphic symplectic manifolds (see also \cite[Theorem 3.8]{AM} for a global version). 
	On the other hand Corollary \ref{cor:degenToSemiample} shows that this is not true in our cases.
	Another obstruction that we did not manage to obtain is that of a variety having infinitely many Mori chambers.
	Given that the Picard rank of $X$ is $2$, such an example would have to include an infinite sequence of anti-flips from $X$, each decreasing the discrepancies and, in a sense, worsening the singularities.
	This would imply that no two of the corresponding models are isomorphic as abstract varieties.
	On the other hand, Hasset and Tschinkel \cite{HT10} provide an example of an irreducible holomorphic symplectic fourfold of Picard rank $2$, with infinitely many Mori chambers, where, however, every second model is abstractly isomorphic.
	
	The outline of the paper is as follows:
	in Section \ref{sec:preliminaries} we recall some basic facts about Mori Dream Spaces, as well as the theory of space curves and quartic surfaces;
	in Section \ref{sec:sufficient} we prove that the blowup of complete and almost complete intersection curves, as well as curves in low degree surfaces, give rise to Mori Dream Spaces;
	in Section \ref{sec:obstruction} we introduce and study curves with extremal surfaces and curves obtained by super-rigid linkage, and show that they actually are obstructions to Mori dreamness;
	in both Sections \ref{sec:sufficient} and \ref{sec:obstruction} we exhibit pairs $(g,d)$ where curves satisfying the corresponding conditions span components of their Hilbert Scheme;
	finally, in Section \ref{sec:skewLinkage} we define and develop the notion of \emph{super-rigid} and \emph{rigid skew linkage} which allows us to prove the \emph{if} part of Theorem \ref{mainthm:2} as well as exhibit some interesting examples;
	we chose to delay this section until the end as it is technical but quite independent from the previous material.

	\subsubsection*{Acknowledgements:} The authors would like to extend their heartfelt gratitude to Prof. St\'ephane Lamy for initial conversions on the topic of this paper as well as for the warm hospitality he provided at Institut de Math{\'e}matiques de Toulouse.
	The authors would also like to thank Francesco Denisi for very helpful comments on an earlier version.
	
	The first author would like to thank IMPA for the warm hospitality he received during the elaboration of this work. 
	He was supported by ERC StG Saphidir No. 101076412. 
	The sencond author would like to thank the laboratory IRL2924 Jean-Christophe Yoccoz for its hospitality.
	
	\subsection*{Notations and Conventions}
	We work over the field of complex numbers $\CC$.
	We denote by $H_{g,d}$ and $H_{g,d}^S$ the Hilbert scheme 
	of locally Cohen-Macaulay and smooth connected curves respectively, of arithmetic genus $g$ and degree $d$.
	Unless otherwise stated, a curve means a smooth connected closed subscheme of dimension $1$.
	Finally, for an $\RR$-vector space $V$ and $v,v_1,v_2 \in V$, we will write $v_1 \prec v \prec v_2$ if there exist positive real numbers $r_1,r_2$ so that $v = r_1v_1 + r_2v_2$.

	\section{Preliminaries}\label{sec:preliminaries}
	
	\subsection{Mori Dream Spaces}
	
	\begin{definition}\label{def:SQM}
		A \emph{small $\QQ$-factorial modification} ({SQM} for short) of a normal $\QQ$-factorial projective variety $X$ is a  birational map $f\colon X \psmap X'$, with $X'$ normal projective and $\QQ$-factorial, such that $f$ is an isomorphism in codimension $1$.
		
		An SQM $f\colon X \psmap X'$ is called a \emph{flip} if it sits in a diagram of the form
		\[
		\xymatrix@R=.4cm@C=.3cm{
			X \ar@{..>}[rr]^f \ar[rd]_p && X' \ar[ld]^q\\
			& Z
		}
		\]
		where $p$ and $q$ are small morphisms of relative Picard rank $1$.
		For a divisor $D \in \Pic(X)$ we say that $f$ is a \emph{$D$-positive/trivial/negative flip} if $D$ is ample/trivial/anti-ample over $Z$.
		If $p$ contracts the extremal ray $R = \RR_+[\gamma]$ of $\NE(X)$ we will say that $f$ \emph{flips} the ray $R$, or simply $\gamma$.	
	\end{definition}
	
	\begin{definition}\label{def:MDS}
		A normal projective variety $X$ will be called a \emph{Mori Dream Space} (MDS for short) if the following hold:
		\begin{enumerate}
			\item\label{it:MDS1} $X$ is $\QQ$-factorial and $\Pic(X)_{\QQ} = N^1(X)$;
			\item\label{it:MDS2} $\Nef(X)$ is the affine hull of finitely many semi-ample divisors;
			\item\label{it:MDS3} there is a finite collection of SQMs $f_i\colon X \psmap X_i$, such that each $X_i$ satisfies  \eqref{it:MDS2} and $\Mov(X)$ is the union of the $f_i^*(\Nef(X_i))$.
		\end{enumerate}
	\end{definition}
	
	The subcones $f_i^*(\Nef(X_i)) \subset \Mov(X)$ are called the \emph{Mori chambers} of $X$.
	
	\begin{remark}\label{rem:MDS}\leavevmode
		\begin{enumerate}
			\item\label{it:MDSrem1} By \cite[Remark 2.4]{Okawa} condition \eqref{it:MDS1} is equivalent to $\Pic(X)$ being finitely generated which in turn is equivalent to $\dim(\Pic^0(X)) = h^1(X,\OO_X) = 0$.
			In particular this is always satisfied when $X$ is a smooth rational variety.
			\item\label{it:MDSrem2} The SQMs of Definition \ref{def:MDS}\eqref{it:MDS3} are the only SQMs of $X$ (see \cite[Proposition 1.11(2)]{HuKeel}).
		\end{enumerate}
	\end{remark}
	
	\subsection{Linkage and the Hilbert-flag scheme}\label{subsec:linkageAndHilberFlag}

	\begin{definition}
		Let $S_1$ and $S_2$ be two smooth surfaces in $\mathbb P^3$ of degrees $n_1$ and $n_2$, respectively. 
		We say that the curves $C$ and $C'$ are \emph{$(S_1,S_2)$-linked} if 
		\[
		S_1 \cap S_2 = C \cup C'.
		\]
		We will write a linkage as $C \link{S_1,S_2} C'$.
		When the specific surfaces are irrelevant we will say that $C$ and $C'$ are \emph{$(n_1,n_2)$-linked} and denote it by $C \link{n_1,n_2} C'$.
	\end{definition}

	\begin{proposition}[{\cite[Subsection~8.4]{HarDef}}]\label{prop:gdLinked}
		Suppose that $C$ and $C'$ are $(n_1,n_2)$-linked.
		Then 
		\[
		d + d' = n_1n_2 \quad \text{ and } \quad g - g' = \frac{1}{2}(n_1 + n_2 - 4)(d - d'),
		\]
		where $(g,d)$ and $(g',d')$ the genera and degrees of $C$ and $C'$ respectively.
	\end{proposition}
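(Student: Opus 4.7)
The plan is to prove the two identities separately using standard techniques from liaison theory, working throughout with the complete intersection curve $Y := S_1 \cap S_2$, of which $C$ and $C'$ are the two links. First, the Koszul resolution of $Y$ combined with additivity of the Euler characteristic yields $\deg(Y) = n_1 n_2$ and (via adjunction on $Y$, so $\omega_Y \cong \mathcal{O}_Y(n_1+n_2-4)$) the formula $2p_a(Y)-2 = n_1 n_2(n_1+n_2-4)$. Since $C$ and $C'$ are smooth and share no common component, their scheme-theoretic union $C \cup C' = Y$ sits in a Mayer-Vietoris sequence
\[
0 \to \mathcal{O}_Y \to \mathcal{O}_C \oplus \mathcal{O}_{C'} \to \mathcal{O}_Z \to 0,
\]
where $Z := C \cap C'$ is zero-dimensional. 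Comparing leading coefficients of Hilbert polynomials gives $d + d' = n_1 n_2$, and comparing the constant terms gives $p_a(Y) = g + g' + \deg(Z) - 1$. Thus the genus formula reduces to computing $\deg(Z)$.

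For this I would use the fundamental isomorphism of liaison, namely $\mathcal{I}_{C/Y} \cong \omega_{C'} \otimes \mathcal{O}_{\mathbb{P}^3}(4-n_1-n_2)$. To establish it, apply $\mathcal{H}om_{\mathcal{O}_Y}(-, \omega_Y)$ to $0 \to \mathcal{I}_{C/Y} \to \mathcal{O}_Y \to \mathcal{O}_C \to 0$ and use that $Y$ is arithmetically Gorenstein while $C, C'$ are Cohen-Macaulay: the ideal $\mathcal{I}_{C/Y}$ is then identified with the annihilator $\mathcal{H}om_{\mathcal{O}_Y}(\mathcal{O}_{C'}, \omega_Y)$, which equals $\omega_{C'}$ by Grothendieck duality applied to the closed embedding $C' \hookrightarrow Y$. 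Combining with $\omega_Y \cong \mathcal{O}_Y(n_1+n_2-4)$ and untwisting delivers the stated identification.

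Taking Euler characteristics, the defining sequence $0 \to \mathcal{I}_{C/Y} \to \mathcal{O}_Y \to \mathcal{O}_C \to 0$ gives $\chi(\mathcal{I}_{C/Y}) = g - p_a(Y)$. On the other hand Riemann-Roch on $C'$ produces
\[
\chi\bigl(\omega_{C'}(4-n_1-n_2)\bigr) = (4-n_1-n_2)\, d' + g' - 1.
\]
Equating the two expressions, substituting $p_a(Y) = \tfrac{1}{2}n_1 n_2(n_1+n_2-4) + 1$, and using $n_1 n_2 = d + d'$ to factor, one recovers
\[
g - g' = \tfrac{1}{2}(n_1+n_2-4)(d-d').
\]

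The main obstacle is the liaison isomorphism $\mathcal{I}_{C/Y} \cong \omega_{C'}(4-n_1-n_2)$: its proof requires carefully setting up the duality between the two residual subschemes inside the Gorenstein complete intersection $Y$, using that for a pair of Cohen-Macaulay links the ideal of one in $Y$ is precisely the $\mathcal{O}_Y$-dual of the structure sheaf of the other with respect to $\omega_Y$. Everything after that reduces to bookkeeping with Euler characteristics, and since the statement is classical, one could alternatively simply defer to the reference \cite[Subsection 8.4]{HarDef} for the identification of $\mathcal{I}_{C/Y}$ and proceed directly to the numerical computation above.
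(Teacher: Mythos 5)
Your proof is correct; note, however, that the paper offers no proof of this proposition at all---it is stated as a quotation of \cite[Subsection~8.4]{HarDef}---and your argument is essentially a reconstruction of the standard liaison-theoretic proof from that reference: the degree formula from the scheme-theoretic decomposition of the complete intersection $Y=S_1\cap S_2$, and the genus formula from the isomorphism $\mathcal{I}_{C/Y}\cong\omega_{C'}(4-n_1-n_2)$ combined with Euler characteristics. Two small blemishes, neither a gap: the sentence asserting that the genus formula ``reduces to computing $\deg(Z)$'' is vestigial, since your actual computation via $\chi(\mathcal{I}_{C/Y})=g-p_a(Y)$ never uses $\deg(Z)$; and the duality step should be stated as $\mathcal{I}_{C/Y}\cong\mathcal{H}om_{\mathcal{O}_Y}(\mathcal{O}_{C'},\mathcal{O}_Y)$, equivalently $\mathcal{I}_{C/Y}\otimes\omega_Y\cong\mathcal{H}om_{\mathcal{O}_Y}(\mathcal{O}_{C'},\omega_Y)\cong\omega_{C'}$, rather than identifying $\mathcal{I}_{C/Y}$ itself with $\mathcal{H}om_{\mathcal{O}_Y}(\mathcal{O}_{C'},\omega_Y)$; your subsequent ``untwisting'' silently corrects for this.
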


	We recall the definition of a Hilbert-flag scheme originally introduced and studied by Kleppe in \cite{Kleppe, KleppeFlag}, in the case of space curves.
	
	\begin{definition}
		The \emph{Hilbert-flag scheme} $D_{g,d}(n_1,n_2)$ is the scheme parametrizing sequences $(C \subset V \subset \PP^3)$ with $C \in H_{g,d}$ and $V$ a complete intersection of type $(n_1,n_2)$ containing $C$.
	\end{definition}
	
	Given a point $(C,V) \in D_{g,d}(n_1,n_2)$ we always have a linkage $C \link{n_1,n_2} C'$;
	we can then naturally associate to it the point $(C',V) \in D_{g',d'}(n_1,n_2)$, where $g',d'$ are as in Proposition \ref{prop:gdLinked}.
	By \cite[Theorem 2.6]{KleppeFlag} this association gives us a natural isomorphism $\ell$ between the two Hilbert-flag schemes.
	
	Consider now the diagram
	\begin{equation}\label{eq:linkedFam}\tag{{\scriptsize$\leftrightarrow$}}
		\begin{tikzcd}[row sep=1.6em]
			D_{g,d}(n_1,n_2) \ar[r, "\ell"]  \ar[d, "p_1"']& D_{g',d'}(n_1,n_2) \ar[d, "p_1'"]\\
			H_{g,d} & H_{g',d'},
		\end{tikzcd}
	\end{equation}
	where $p_1$ and $p_1'$ denote the projections to the first factor.
	For a subscheme $U \subset H_{g,d}$ define $U' \defeq p_1'(\ell(p_1^{-1}(U)))$.
	We will call $U'$ the \emph{$(n_1,n_2)$-linked family} to $U$.
	
	The following statements show that, under certain assumptions, the process of \emph{linkage in families} of \eqref{eq:linkedFam} preserves openness and respects specializations.

	\begin{proposition}[{\cite[Proposition 3.8]{KleppeFlag}}]\label{prop:linkedOpen}
		Let $U, U'$ be as above and suppose that for all $C \in U$ we have
		\[
		h^1(\II_C(n_1-4)) = h^1(\II_C(n_2-4)) =0.
		\]	
		Then, if $U$ is open in $H_{g,d}$, so is $U'$ in $H_{g',d'}$.
	\end{proposition}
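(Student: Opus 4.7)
The plan is to show openness by establishing that $p_1'$ is an open map on a neighborhood of $\ell(p_1^{-1}(U))$. Since $\ell$ is an isomorphism of Hilbert-flag schemes and $p_1$ is continuous, $\ell(p_1^{-1}(U))$ is an open subset of $D_{g',d'}(n_1,n_2)$, so it suffices to prove that $p_1'$ is smooth, and hence open, at each point of this subset.

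First I would recall Kleppe's tangent--obstruction theory for the Hilbert-flag scheme: the fiber of $p_1'$ over $C'$ parametrizes complete intersections $V \supset C'$ of type $(n_1,n_2)$, and smoothness of $p_1'$ at $(C',V)$ is governed by vanishings of $H^1$ groups measuring whether first-order deformations of $C'$ lift to flag deformations. Using $V = S_1 \cap S_2$, the splitting $N_{V/\PP^3} = \OO_V(n_1) \oplus \OO_V(n_2)$, and the Koszul resolution of $\OO_V$, these obstruction groups can be rewritten in terms of $h^1(\II_{C'}(n_i - 4))$ for $i = 1,2$ via the standard dictionary between normal-bundle cohomology and ideal-sheaf cohomology on $\PP^3$.

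Next I would translate the hypothesis from $C$ to $C'$ by invoking the duality of Hartshorne--Rao modules under linkage: for a linkage $C \link{n_1,n_2} C'$, Rao's theorem gives a canonical isomorphism $M(C)_m \cong M(C')_{n_1+n_2-4-m}^{\vee}$, which in particular interchanges the vanishings $h^1(\II_C(n_i-4)) = 0$ with vanishings on $C'$ in the complementary degrees. Combined with the Koszul computation of the previous step, this yields exactly the cohomological input required to conclude that $p_1'$ is smooth at every point of $\ell(p_1^{-1}(U))$, so that $U' = p_1'(\ell(p_1^{-1}(U)))$ is open in $H_{g',d'}$.

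The main obstacle I anticipate is the precise bookkeeping of which obstruction classes live in which $H^1$ groups, and verifying that the shift by $n_1 + n_2 - 4$ arising from Rao duality lands exactly on the groups controlling smoothness of $p_1'$ rather than on a shifted cousin. Once this identification is pinned down, the rest of the argument is formal: smoothness implies openness, and we are done.
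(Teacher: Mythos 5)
First, a point of comparison: the paper does not prove this proposition at all — it is imported verbatim from Kleppe \cite{KleppeFlag}, so the only meaningful baseline is the standard argument behind that citation, which your proposal is essentially reconstructing. Your architecture is the right one: $p_1^{-1}(U)$ is open, $\ell$ is an isomorphism of Hilbert-flag schemes, so it suffices to show that $p_1'$ is smooth (hence open) along $\ell(p_1^{-1}(U))$, and that smoothness of the flag-to-Hilbert projection is a cohomological condition to be matched against the hypothesis via Rao's duality of Hartshorne--Rao modules.

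However, there is a concrete error in the middle step which, as written, prevents the two halves of your argument from splicing together. The deformation-theoretic criterion for smoothness of $p_1'$ at a point $(C',V)$ is the vanishing $h^1(\II_{C'}(n_1)) = h^1(\II_{C'}(n_2)) = 0$, with \emph{no} twist by $\omega_{\PP^3} = \OO(-4)$: a deformation of $C'$ lifts to a deformation of the flag $(C' \subset V)$ exactly when the two equations cutting out $V = S_1 \cap S_2$, viewed as elements of $H^0(\II_{C'}(n_i))$, lift along the deformation, and the obstruction to lifting these sections lives in $H^1(\II_{C'}(n_i))$. You instead assert that the relevant obstruction groups "can be rewritten in terms of $h^1(\II_{C'}(n_i-4))$". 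With the correct groups the proof closes: Rao duality gives $h^1(\II_{C'}(v)) = h^1(\II_C(n_1+n_2-4-v))$, so the hypothesis $h^1(\II_C(n_1-4)) = h^1(\II_C(n_2-4)) = 0$ translates into exactly $h^1(\II_{C'}(n_2)) = h^1(\II_{C'}(n_1)) = 0$. With your groups, the same duality would instead demand $h^1(\II_C(n_1)) = h^1(\II_C(n_2)) = 0$, which is not assumed. In other words, the $-4$ twist belongs on the $C$-side (the hypothesis), not on the $C'$-side (the smoothness criterion); as written your two steps are off by this twist and the conclusion "this yields exactly the cohomological input required" does not follow — this is precisely the "shifted cousin" worry you flagged in your final paragraph but left unresolved. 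Once the bookkeeping is corrected, your argument is the standard one and goes through.
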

	
	\begin{proposition}[{\cite[Proposition 3.7]{KleppeFlag}}]\label{prop:specialization}
		Let $C_0 \in H_{g,d}$ and $C_0 \link{n_1,n_2} C_0'$ be a linkage.
		Suppose that $C_0$ is a specialization of $C \in  H_{g,d}$ so that
		\[
		h^0(\II_{C_0}(n_i)) = h^0(\II_{C}(n_i)) \quad \text{ and } \quad h^0(\OO_{C_0}(n_i-4)) = h^0(\OO_{C}(n_i-4)), 
		\]
		for $i = 1,2$.
		
		Then, there exists a linkage $C \link{n_1,n_2} C'$ so that $C_0'$ is a specialization of $C'$.
	\end{proposition}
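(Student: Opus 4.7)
The plan is to construct a family of linkages over a one-parameter base, by first extending the linking surfaces and then forming the residual. Since $C_0$ is a specialization of $C$, we may choose a smooth pointed curve $(T,0)$ (equivalently the spectrum of a DVR) and a flat family $\mathcal{C} \to T$ whose closed fiber is $C_0$ and whose generic fiber is $C$. Throughout, let $\pi\colon \PP^3_T \to T$ denote the projection.

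Step 1: extending the surfaces. The first cohomological hypothesis $h^0(\II_{C_0}(n_i)) = h^0(\II_C(n_i))$ forces equality in the semicontinuity inequality applied to $\pi_*\II_{\mathcal{C}/T}(n_i)$; by the cohomology-and-base-change theorem, this sheaf is then locally free on $T$ and its formation commutes with base change. Shrinking $T$ around $0$ if necessary, the section of $\pi_*\II_{\mathcal{C}/T}(n_i)|_0 = H^0(\II_{C_0}(n_i))$ cutting out $S_i^0$ lifts to a global section $f_i$, and the vanishing loci $\mathcal{S}_i := V(f_i) \subset \PP^3_T$ form flat families of degree-$n_i$ surfaces containing $\mathcal{C}$, with $\mathcal{S}_i|_0 = S_i^0$.

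Step 2: the complete intersection family. Because $S_1^0 \cap S_2^0$ is a proper complete intersection of pure dimension $1$, an openness argument shows that $\mathcal{W} := \mathcal{S}_1 \cap \mathcal{S}_2 \to T$ is a family of proper complete intersections of type $(n_1,n_2)$ near $0$, and hence flat over $T$ with constant Hilbert polynomial. One then defines the residual family as the subscheme $\mathcal{C}' \subset \mathcal{W}$ cut out by the ideal quotient $\II_{\mathcal{C}'/\PP^3_T} = (\II_{\mathcal{W}/\PP^3_T} : \II_{\mathcal{C}/\PP^3_T})$, equivalently via the identification $\II_{\mathcal{C}/\mathcal{W}} \cong \omega_{\mathcal{C}'/T}(4-n_1-n_2)$ coming from relative Serre duality on complete intersections.

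Step 3: the main obstacle, flatness and identification of the special fiber. One must show that $\mathcal{C}'/T$ is flat and that $\mathcal{C}'|_0 = C_0'$ scheme-theoretically, rather than acquiring embedded components in the limit. Both curves $C_0'$ and $\mathcal{C}'|_0$ are supported on $(S_1^0 \cap S_2^0) \setminus C_0$ and, by Proposition \ref{prop:gdLinked}, the generic fiber $\mathcal{C}'|_{\mathrm{gen}}$ has the prescribed degree $d' = n_1n_2 - d$ and genus $g' = g - \tfrac{1}{2}(n_1+n_2-4)(d-d')$; flatness over $T$ then forces $\mathcal{C}'|_0$ to have the same numerical invariants as $C_0'$, so the inclusion $C_0' \subseteq \mathcal{C}'|_0$ coming from the construction will be an equality once flatness is established. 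This is precisely where the second hypothesis enters: via the linkage sequence
\[
0 \to \II_{\mathcal{W}/\PP^3_T}(n_i) \to \II_{\mathcal{C}/\PP^3_T}(n_i) \to \omega_{\mathcal{C}'/T}(n_i - n_1 - n_2 + 4) \to 0,
\]
the equality $h^0(\OO_{C_0}(n_i-4)) = h^0(\OO_C(n_i-4))$, interpreted via Serre duality on $\mathcal{C}$ and $\mathcal{C}'$, ensures that the pushforwards of the relevant dualizing twists stay locally free on $T$, which in turn yields flatness of $\omega_{\mathcal{C}'/T}$ and thereby of $\mathcal{C}'/T$. Granting this, $\mathcal{C}'$ defines a morphism $T \to H_{g',d'}$ sending $0 \mapsto C_0'$, and the generic point provides the desired linkage $C \link{n_1,n_2} C'$ with $C_0'$ as a specialization of $C'$.
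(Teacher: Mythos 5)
First, a remark on the comparison itself: the paper offers no proof of this proposition --- it is imported from \cite[Proposition 3.7]{KleppeFlag}, and the argument natural to the paper's setup would run through diagram \eqref{eq:linkedFam}, lifting $(C_0,W_0)$ to a specialization inside the Hilbert-flag scheme $D_{g,d}(n_1,n_2)$ and pushing it through $\ell$ and $p_1'$. Your Steps 1 and 2 are a sound direct substitute for that lifting: flatness of $\II_{\mathcal{C}/\PP^3_T}(n_i)$ over $T$, Grauert/cohomology-and-base-change over the reduced base $T$, lifting of the equations $f_i$, and openness of properness of the intersection (plus the regular-sequence argument giving flatness of $\mathcal{W}$) are all correct, and this is exactly where the hypothesis $h^0(\II_{C_0}(n_i))=h^0(\II_{C}(n_i))$ is genuinely used.

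The gap is in Step 3, which you correctly flag as the main obstacle but then do not actually carry out. Two concrete problems. (a) Circularity: writing $\II_{\mathcal{C}/\mathcal{W}}\isom\omega_{\mathcal{C}'/T}(4-n_1-n_2)$ presupposes that $\mathcal{C}'$ is a flat family of Cohen--Macaulay curves whose relative dualizing sheaf is well behaved --- which is precisely what is to be proven. (b) The asserted mechanism is not a valid implication: local freeness of pushforwards of certain twists does not yield flatness of a sheaf, much less of the family $\mathcal{C}'/T$; moreover, by Serre duality the second hypothesis controls $h^0(\omega_{C_t}(4-n_i))$, i.e.\ sections of the dualizing sheaves of the fibers of $\mathcal{C}$, not of $\mathcal{C}'$, so it cannot feed into your sequence as claimed. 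The correct repair is to turn the duality around: set $\II_{\mathcal{C}'/\mathcal{W}}\defeq\mathcal{H}om_{\OO_{\mathcal{W}}}(\OO_{\mathcal{C}},\OO_{\mathcal{W}})\isom\omega_{\mathcal{C}/T}\otimes\omega_{\mathcal{W}/T}^{-1}$ --- the dualizing sheaf of the family you already know to be flat; prove that $\omega_{\mathcal{C}/T}$ is $T$-flat and commutes with base change (use local length-two free resolutions of $\OO_{\mathcal{C}}$, which exist because the fibers are CM of codimension $2$); then conclude flatness of $\OO_{\mathcal{C}'}$ and the equality $\mathcal{C}'|_0=C_0'$ from the sequence
\[
0\to\II_{\mathcal{C}'/\mathcal{W}}\to\OO_{\mathcal{W}}\to\OO_{\mathcal{C}'}\to 0,
\]
since the left-hand map is injective on the special fiber (there it is the ideal inclusion $\II_{C_0'/W_0}\subset\OO_{W_0}$), which kills the relevant $\mathrm{Tor}_1$ and makes the sequence restrict exactly to fibers. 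Note that this step requires no cohomological hypothesis at all; the second hypothesis of the proposition is not the engine of flatness but rather, via the liaison sequence $0\to\II_{W}(n_i)\to\II_{C'}(n_i)\to\omega_{C}(4-n_j)\to 0$ and constancy of Euler characteristics, is equivalent to constancy of $h^0(\II_{C_t'}(n_i))$ --- the mirror of the first hypothesis for the residual family, which is what Kleppe's flag-scheme argument exploits.
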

	
	We now recall that standard notion of arithmetically Cohen-Macaulay curves.
	
	\begin{definition}\label{def:ACM}
		Let $C$ be a curve in $\mathbb P^3$. 
		We say that $C$ is \emph{arithmetically Cohen-Macaulay}, or ACM for short, if its \emph{Hartshorne-Rao module}
		\[
		HR(C) = \bigoplus_{m \in \mathbb Z} H^1\big(\mathbb P^3,\mathcal I_C(m)\big)
		\]
		is trivial.
	\end{definition}
	
	\begin{theorem}[{\cite[Theorem~2]{Ellingsrud}}]\label{thm:ACMareOpen}
		For any $(g,d)$ the locus of ACM curves in $H_{g,d}^S$ is open and smooth.
	\end{theorem}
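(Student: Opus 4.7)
The plan is to address the openness and smoothness assertions separately, each via a reduction to finite cohomological data.

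For openness, the condition $HR(C) = 0$ is a priori the vanishing of $h^1(\mathcal I_C(m))$ for every $m \in \ZZ$, but this collapses to finitely many $m$ that depend only on $(g,d)$. Twisting the short exact sequence $0 \to \mathcal I_C \to \OO_{\PP^3} \to \OO_C \to 0$ by $\OO(m)$ and taking cohomology yields $h^1(\mathcal I_C(m)) = h^0(\OO_C(m))$ for every $m \leq -1$, and the latter vanishes since $\OO_C(m)$ has negative degree on the connected smooth curve $C$. On the positive end, a uniform Castelnuovo-Mumford regularity bound on $H_{g,d}^S$ (e.g.\ from Gruson-Lazarsfeld-Peskine) produces an integer $M = M(g,d)$ such that $h^1(\mathcal I_C(m)) = 0$ for all $m \geq M$ and all $C \in H_{g,d}^S$. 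The ACM locus is then the finite intersection of the open loci $\{h^1(\mathcal I_C(m)) = 0\}$ for $0 \leq m < M$, and semicontinuity of cohomology in flat families closes this step.

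For smoothness, a smooth curve in $\PP^3$ is lci, so the Hilbert scheme is smooth at $[C]$ whenever $H^1(N_C) = 0$. The key input is the Hilbert-Burch theorem: since $C$ is ACM, its homogeneous coordinate ring has depth $2$, and Auslander-Buchsbaum gives $I_C$ a minimal resolution of length two,
\[
0 \to F_1 \xrightarrow{\varphi} F_0 \to \mathcal I_C \to 0, \qquad F_0 = \bigoplus \OO_{\PP^3}(-a_i),\ F_1 = \bigoplus \OO_{\PP^3}(-b_j),
\]
with $I_C$ cut out by the maximal minors of $\varphi$; conversely, a sufficiently generic matrix of this shape has maximal minors defining an ACM curve of the same numerical type. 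This identifies an open neighborhood of $[C]$ in the ACM locus with the image of the smooth parameter space of such matrices $\varphi$, modulo the reparametrizing $\mathrm{GL}(F_0) \times \mathrm{GL}(F_1)$-action, yielding smoothness of the ACM locus.

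The main obstacle is the final step, namely verifying that every first-order deformation of $C$ arises from a first-order deformation of $\varphi$, i.e.\ that the Betti numbers of the resolution do not jump under deformation. This reduces to the vanishing of certain $\operatorname{Ext}^1$ groups between the $F_i$, or equivalently, to computing $H^1(N_C)$ directly from the dualized Hilbert-Burch resolution via a spectral sequence argument; both approaches pin the required vanishing on the ACM vanishings $h^1(\mathcal I_C(n)) = 0$ applied to the twists appearing in $F_0 \oplus F_1$, which hold precisely because $C$ is ACM.
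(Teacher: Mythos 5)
The paper does not actually prove this statement: it is quoted directly from Ellingsrud (\cite[Theorem 2]{Ellingsrud}), so there is no internal proof to compare against and your proposal has to stand on its own. Its first half does stand: the reduction of the ACM condition to finitely many twists (negative twists killed via the structure sequence and connectedness of $C$, large twists via a uniform regularity bound, and the remaining finitely many handled by semicontinuity of $h^1(\II_C(m))$ in the flat family of ideal sheaves over $H_{g,d}^S$) is a complete and correct proof of openness.

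The smoothness half has a genuine gap, and the reduction you offer to close it is aimed at a false statement. You propose to conclude either by lifting deformations of $C$ to deformations of the Hilbert--Burch matrix $\varphi$, or, ``equivalently'', by proving $H^1(N_C)=0$ from the ACM vanishings at the twists of $F_0\oplus F_1$. But $H^1(N_C)=0$ is simply false for ACM curves in general: take $C$ the smooth complete intersection of a quadric and a quartic, so $(g,d)=(9,8)$ and $C$ is ACM; then $N_C \isom \OO_C(2)\oplus\OO_C(4)$ and $\omega_C \isom \OO_C(2)$, hence $h^1(N_C) \geq h^1(\OO_C(2)) = h^0(\OO_C) = 1$, even though the Hilbert scheme \emph{is} smooth at $[C]$, of dimension $33 = h^0(N_C)$. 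So no spectral-sequence computation can produce that vanishing, and unobstructedness of ACM curves is not of the naive $H^1(N_C)=0$ kind; this is precisely why Ellingsrud's proof goes through the resolution rather than through $N_C$. That forces you back onto your first route, which as written is also insufficient on two counts: (i) being the image of a smooth parameter space of matrices modulo the $\GL(F_0)\times\GL(F_1)$ action does not make a locus smooth --- images of smooth varieties are routinely singular; what is needed is that the classifying morphism from the open locus of Hilbert--Burch matrices to the Hilbert scheme is a \emph{smooth morphism}, after which regularity of the source descends to the target along the faithfully flat local homomorphisms; and (ii) for that, first-order lifting is not enough --- one must lift deformations of $C$ over arbitrary Artin local rings to deformations of $\varphi$. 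This full-strength lifting is the actual content of the theorem: the ACM hypothesis $h^1(\II_C(n))=0$ for all $n$, via cohomology and base change, makes the graded ideal of a flat family flat and compatible with specialization, Nakayama lifts the minimal generators, projective dimension one lifts the syzygy matrix, and Hilbert--Burch together with the Buchsbaum--Eisenbud exactness criterion shows conversely that a deformed matrix still cuts out a flat family of ACM curves. Your proposal names several of these ingredients but explicitly defers this step --- and the ``equivalent'' target it points to cannot be reached, because it is false.
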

	
	\begin{remark}\label{rem:linkedToACM}\leavevmode
		\begin{enumerate}
			\item\label{it:linkedToACM1} The locus of ACM curves in $H_{g,d}^S$ is not necessarily irreducible;
			its irreducible components are parametrised by an extra piece of data called the \emph{$h$-vector} (see \cite[8.11 and 8.12]{HarDef}).
			\item\label{it:linkedToACM2} Theorem \ref{thm:ACMareOpen} holds more generally for ACM subschemes of codimension $2$ in $\PP^n$.
			It is no longer true in codimension $3$ or more (see \cite[Remark 8.10.2]{HarDef}).
			\item\label{it:linkedToACM3} Linkage preserves the Hartshorne-Rao module up to grading shifts (cf.\ \cite[Subsection 8.6]{HarDef});
			in particular, if two curves are linked, then one is ACM if and only if the other is.		
			More generally Rao \cite{Rao} proves that two curves are connected by a series of linkages and deformations \emph{if and only if} their Rao modules differ by a grading shift.
		\end{enumerate}
	\end{remark}

	\subsection{$\QQ$-canonical curves}
	
	We now introduce the notion of a $\QQ$-canonical curve that will play a fundamental role in Section \ref{subsec:SRigidLinkage}. 
	
	\begin{definition}\label{def:Qcanonical}
		A curve $C \subset \PP^3$ is called \emph{$\QQ$-canonical} if a rational multiple of $H|_C$ is linearly equivalent to $K_C$, that is $\mu K_C \sim \nu H|_C$,  for some integers $\mu, \nu$.
		
		If $\mu = 1$ and $\nu \geq 1$ we say that $C$ is \emph{$\nu$-subcanonical}, or simply \emph{subcanonical}.
		A $1$-subcanonical curve is called \emph{canonical}.
	\end{definition}
	
	\begin{example}\label{ex:Qcan}\leavevmode
		\begin{enumerate}
			\item Any rational curve and, quite trivially, any elliptic curve is $\QQ$-canonical.
			\item\label{it:QcanCI} If $C$ is a smooth complete intersection of surfaces of degrees $n_1, n_2$ then, by adjunction we have
			\[
			K_C \sim (n_1 + n_2 - 4)H|_C,
			\]
			i.e.\ it is $(n_1 + n_2 - 4)$-subcanonical.
			In fact a classic theorem of Gherardelli \cite{Gherardelli} states a partial inverse:
			if $C$ is a subcanonical ACM curve, then it is a complete intersection.
		\end{enumerate}
	\end{example}
	
	Before we prove that $\QQ$-canonicity is a speciality condition, we recall the following classic theorem.
	
	\begin{theorem}[Clifford's Theorem, {\cite[Chapter III, \S 1]{ACGH}}]\label{thm:Clifford}
		Let $C$ be a curve of genus $g$ and $D$ an effective divisor of degree $d \leq 2g-1$.
		Then
		\[
		h^0(C,D) \leq \frac{d}{2} +1,
		\]
		with equality if and only if $D=0, K_C$ or $D$ is a multiple of a hyperelliptic divisor.
	\end{theorem}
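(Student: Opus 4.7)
The plan is to derive the inequality from Riemann--Roch combined with a multiplication-map argument, and to handle the equality case separately via a pencil analysis.

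First I would dispose of the degenerate cases: if $h^0(C,D) = 0$ the bound is trivial, while if $h^0(K_C - D) = 0$ then Riemann--Roch gives $h^0(D) = d - g + 1 \leq d/2 + 1$, since $d \leq 2g-1 < 2g$, and the inequality is strict. So I may assume $h^0(D) \geq 1$ and $h^0(K_C - D) \geq 1$, and in particular both $D$ and $K_C - D$ are effective up to linear equivalence.

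The core step is to prove the intermediate bound $h^0(D) + h^0(K_C - D) \leq g + 1$. For this, consider the multiplication map
\[
\mu \colon H^0(C,D) \otimes H^0(C,K_C - D) \longrightarrow H^0(C,K_C),
\]
and fix nonzero $s_0 \in H^0(D)$ and $t_0 \in H^0(K_C - D)$. The subspaces $U_1 := s_0 \cdot H^0(K_C - D)$ and $U_2 := H^0(D) \cdot t_0$ of $H^0(K_C)$ have dimensions $h^0(K_C - D)$ and $h^0(D)$ respectively, since multiplication by a nonzero section is injective on the integral curve $C$. A relation $s_0 t = s t_0$ forces $s/s_0 = t/t_0$ to be a global regular function on $C$, hence constant, so $U_1 \cap U_2 = \CC \cdot s_0 t_0$. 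This yields
\[
h^0(D) + h^0(K_C - D) - 1 = \dim(U_1 + U_2) \leq \dim H^0(K_C) = g.
\]
Combined with Riemann--Roch $h^0(D) - h^0(K_C - D) = d - g + 1$, addition gives $2 h^0(D) \leq d + 2$, as required.

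The main obstacle will be the equality characterization. The cases $D \sim 0$ and $D \sim K_C$ are handled by inspection. Otherwise one has $h^0(D), h^0(K_C - D) \geq 2$, and equality in the Clifford bound forces $U_1 + U_2 = H^0(K_C)$ for every choice of $s_0, t_0$. I would then invoke the base-point-free pencil trick: after stripping off the fixed part, the moving part of $|D|$ furnishes a base-point-free pencil, giving a finite morphism $C \to \PP^1$ of some degree $k$. An induction on $h^0(D)$ applied to general members of this pencil then forces $k = 2$, so that $C$ must be hyperelliptic and $D$ linearly equivalent to a multiple of the hyperelliptic $g^1_2$.
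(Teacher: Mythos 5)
The paper does not actually prove this statement—it is quoted verbatim from ACGH—so your proposal can only be compared with the standard reference proof, whose outline (reduce to $h^0(D)+h^0(K_C-D)\leq g+1$, then add Riemann--Roch) you correctly follow. However, your key linear-algebra step contains a genuine error. The claim that a relation $s_0t=st_0$ forces $s/s_0=t/t_0$ to be a \emph{global} regular function is false: that rational function is regular only away from the common zeros of $s_0$ and $t_0$, and it may have poles there. In general one has $U_1\cap U_2\isom H^0\big(C,\OO_C(G)\big)$, where $G=\gcd\big(\divv(s_0),\divv(t_0)\big)$, and this can have dimension greater than $1$. Concretely, let $C$ be hyperelliptic of genus $g\geq 5$ with hyperelliptic pencil spanned by $h_1,h_2\in H^0(H)$, and take $D=2H$ (so $K_C-D\sim(g-3)H$), $s_0=h_1^2$ and $t_0=h_1h_2^{g-4}$. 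Then $h_1^3h_2^{g-4}$ and $h_1^2h_2^{g-3}$ are two linearly independent elements of $U_1\cap U_2$, so your dimension count $\dim(U_1+U_2)=h^0(D)+h^0(K_C-D)-1$ fails for this choice of sections. Note also that the obvious repair of ``choose $s_0,t_0$ with disjoint zero sets'' is not always available: for $D=H+p$ with $p$ a Weierstrass point, $p$ is a base point of both $|D|$ and $|K_C-D|$.

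The argument can be salvaged, but not for free: one must take $s_0,t_0$ generic, so that $G$ equals the common part of the fixed divisors of $|D|$ and $|K_C-D|$, and then prove that this fixed divisor $F$ satisfies $h^0(\OO_C(F))=1$; that last fact is itself established by the very mechanism you are trying to bypass, namely the finiteness of the addition map $|D_1|\times|D_2|\to|D_1+D_2|$, which gives $\dim|D_1+D_2|\geq\dim|D_1|+\dim|D_2|$ directly and, applied to $|D|\times|K_C-D|\to|K_C|$, yields the bound with no section-level bookkeeping. Separately, your treatment of the equality case is a sketch rather than a proof: ``an induction on $h^0(D)$ applied to general members of this pencil then forces $k=2$'' asserts the conclusion without an argument, and the equality characterization is precisely the delicate part of Clifford's theorem. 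I would recommend replacing the multiplication-map step by the divisor-addition argument and writing out the equality case in full.
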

	
	\begin{proposition}\label{prop:QcanonicalNonGen}
		Let $(g,d)$ be a pair of integers with $g\geq 2$ and $d \geq 1$.
		Suppose that either $d \geq 2g-2$ or $\frac{3g}{2} \leq d < 2g-2$ and $(g,d) \neq (3,4), (4,6)$.
		
		Then the locus of $\QQ$-canonical curves of genus $g$ and degree $d$ is a countable union of closed strict subsets of every component of $H_{g,d}^S$.	
	\end{proposition}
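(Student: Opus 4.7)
The plan is to split the proof into two parts: first, showing that the $\QQ$-canonical locus is a countable union of closed subschemes of $H_{g,d}^S$; second, showing that no such subscheme contains an entire component. For the first part, I would let $e = \gcd(d, 2g-2)$ and set $a = (2g-2)/e$, $b = d/e$, so that a curve $C$ is $\QQ$-canonical precisely when the degree-zero line bundle $L_C := aK_C - bH|_C$ is torsion in $\Pic^0(C)$. The relative Picard scheme $\operatorname{Pic}^0 \to H_{g,d}^S$ of the universal curve carries a regular section $\sigma \colon C \mapsto L_C$, and for every positive integer $n$ the preimage $V_n := \sigma^{-1}([n]^{-1}(0))$ of the zero section under the multiplication-by-$n$ endomorphism of $\operatorname{Pic}^0$ is closed. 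The $\QQ$-canonical locus is then $\bigcup_n V_n$, a countable union of closed subschemes.

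For the second part, I must show that for every component $W$ of $H_{g,d}^S$ and every $n$, the subscheme $V_n \cap W$ is strict, equivalently that some $C \in W$ satisfies $nL_C \not\sim 0$. My approach is to consider the moduli-with-line-bundle morphism $\Phi \colon W \to \mathcal{P}_{g,d}$, where $\mathcal{P}_{g,d} \to \mathcal{M}_g$ is the universal Picard scheme of degree $d$. The $\QQ$-canonical locus pulls back from a countable union of translates of torsion multisections of the universal Jacobian inside $\mathcal{P}_{g,d}$, each of codimension $g \geq 2$. The core claim is that the image $\Phi(W)$ is never contained in any one of these translates; once this is established, the strictness of every $V_n \cap W$ follows.

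The heart of the argument is to show that $H|_C$ genuinely varies as $C$ moves in $W$. For $d > 2g-2$, $H|_C$ is non-special and deforms freely over $\Pic^d(C)$, which has dimension $g \geq 2$, so $\Phi(W)$ dominates an open subset of every fiber of $\mathcal{P}_{g,d} \to \mathcal{M}_g$. For $d = 2g-2$, the exclusion $(g,d) = (3,4)$ is vacuous since $H_{3,4}^S = \emptyset$ (a smooth genus $3$ degree $4$ curve in $\PP^3$ would force $h^1(H|_C) \geq 2$, contradicting $h^0(K_C - H|_C) \leq 1$); $(g,d)=(4,6)$ is the genuine obstruction of canonical complete intersections of a quadric and cubic; and for $g \geq 5$, $h^0(K_C) = g > 4$, so $H|_C$ comes from a proper $4$-dimensional subspace of $H^0(K_C)$ which one can deform in the Grassmannian to break the identity $H|_C \sim K_C$. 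For $\tfrac{3g}{2} \leq d < 2g-2$, $H|_C$ is special with $h^0(H|_C) \geq 4$; here I would invoke Brill--Noether theory together with Clifford's theorem (Theorem \ref{thm:Clifford}) to control the dimension of $W^3_d(C) \subset \Pic^d(C)$ and argue positive-dimensional variation.

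I expect the main obstacle to lie in the middle range $\tfrac{3g}{2} \leq d < 2g-2$, where $H|_C$ is forced to be special and the geometry of the components of $H_{g,d}^S$ is subtler. The hypothesis $d \geq \tfrac{3g}{2}$ is exactly what makes the relevant Brill--Noether loci behave well, and the excluded pairs correspond to the genuine degeneracies where either no smooth space curve of those numerics exists or every such curve is canonical.
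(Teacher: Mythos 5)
Your first half---realizing the $\QQ$-canonical locus as a countable union of closed subschemes via torsion in the relative Picard scheme---is essentially the paper's argument and is sound, modulo an indexing slip: with $a=(2g-2)/e$ and $b=d/e$ the bundle $aK_C-bH|_C$ has degree $\big((2g-2)^2-d^2\big)/e\neq 0$ in general; the degree-zero bundle whose torsion characterizes $\QQ$-canonicity is $bK_C-aH|_C$. The strictness half, however, contains two outright errors and one unproved case. First, $H_{3,4}^S$ is \emph{not} empty: by Castelnuovo's bound a smooth curve of genus $3$ and degree $4$ in $\PP^3$ must be planar, and smooth plane quartics are exactly such curves; for them $H|_C\sim K_C$, so $h^0(H|_C)=3$ and $h^1(H|_C)=1$, consistent with Riemann--Roch---your claimed cohomological contradiction evaporates because the embedding is degenerate. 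This is precisely why $(3,4)$ must be excluded: \emph{every} curve in $H_{3,4}^S$ is canonical, as the paper's remark following the proposition states. Second, for $d=2g-2$ and $g\geq 5$, deforming the $4$-plane $V$ inside $\operatorname{Gr}\big(4,H^0(K_C)\big)$ changes the embedding but not the hyperplane bundle, which remains $K_C$; such a deformation can never ``break the identity $H|_C\sim K_C$.'' To leave the $\QQ$-canonical locus you must move the line bundle itself off the countable set of torsion translates of $K_C$, and then verify that the moved bundle still has $h^0\geq 4$, is very ample, and that the resulting embedded curves stay in the given component $W$.

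The remaining range $\frac{3g}{2}\leq d<2g-2$, which you yourself flag as the main obstacle, is left entirely to an appeal to ``Brill--Noether theory'' with no argument, and this is where the paper does its real work: it parametrizes the $\QQ$-canonical locus by tuples (curve, line bundle, $4$-plane, basis up to scaling), bounds its dimension by $(3g-3)+4\big(h^0(C,D)-4\big)+15$, applies Clifford's theorem $h^0(C,D)\leq \frac{d}{2}+1$, and---because equality in Clifford forces hyperellipticity---splits into a non-hyperelliptic part (strict Clifford inequality) and a hyperelliptic part, the latter lying over the $(2g-1)$-dimensional hyperelliptic locus of $\mathcal{M}_g$. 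Each count is then compared with the lower bound $\dim W\geq 4d$, valid for \emph{every} component $W$ of $H_{g,d}^S$ (\cite[Theorem 12.1]{HarDef}). Your proposal never invokes this bound, and without it (or your fiber-domination claim, which you only sketch and which is plausible only in the nonspecial range $d>2g-2$, where it would indeed give a genuine deformation-theoretic alternative) nothing rules out a component $W$ sitting entirely inside a single torsion multisection---which is exactly what happens for $(3,4)$ and $(4,6)$. So the nonspecial case of your plan can be repaired, but the cases $d=2g-2$ and $\frac{3g}{2}\leq d<2g-2$ need the paper's dimension count or a genuine substitute for it.
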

	
	\begin{proof}	
		We first prove that, for a fixed curve $C\in \mathcal{M}_g$ and an integer $d$, the locus of divisors $D \in \Pic^d(C)$ satisfying $\mu K_C \sim \nu D$,  for some integers $\mu, \nu$ is discrete.	
		Let $\frac{2g-2}{d} = \frac{n}{m}$, where $m,n$ are coprime and fix a divisor $D_0 \in \Pic^d(C)$ so that $mK_C \sim nD_0$.
		Then $\mu = km$ and $\nu = kn$ for some $k \geq 1$ and so we have
		\[
		\nu D \sim \mu K_C = km K_C \sim kn D_0 = \nu D_0
		\]
		i.e.\ $D_0$ and $D$ differ by a $\nu$-torsion divisor of degree $0$; the latter being discrete in $\Pic^0(C)$ proves our claim.

		We may now describe the locus of $\QQ$-canonical curves as
		\begin{equation}\tag{$\dagger$}\label{eq:Qcan}
			\mathcal{QK}_{g,d} \defeq \bigcup_{C\in \mathcal{M}_g}
			\left\{
			\begin{array}{c}
				\text{$B$ a basis of $V$}\\
				\text{up to scaling}
			\end{array}
			\middle|
			\begin{array}{c}
				V \in \operatorname{Gr}\big(4,H^0(C,D)\big), \, rD \sim_{\QQ}K_C \text{ for $r \in \QQ$}\\
				D \,\text{ is a very ample divisor of degree $d$} 
			\end{array}
			\right\}.
		\end{equation}
		This description shows immediately that the locus of $\QQ$-canonical curves is a countable union of closed subschemes.
		To complete the proof it thus suffices to show that $\mathcal{QK}_{g,d}$ is a strict subset of every component of  $H_{g,d}^S$.
		The result will follow from a dimension count: the dimension of every component of $H_{g,d}^S$ is at least $4d$ (see \cite[Theorem 12.1]{HarDef}), so it suffices to show that $\dim\mathcal{QK}_{g,d}<4d$.

		Since the subset of $\Pic^d(C)$ of divisors $D$ with $rD \sim_{\QQ}K_C$ for some $r \in \QQ$ is discrete, for the purposes of dimension counting, it suffices to choose any such $D$ such that $h^0(C,D)$ is maximal.
		We then get
		\[
		\dim\mathcal{QK}_{g,d} = (3g-3) + 4\big(h^0(C,D) - 4\big) + 15,
		\]
		where the numbers from left to right are: the dimension of $\mathcal{M}_g$, the dimension of the Grassmanian $\operatorname{Gr}\big(4,H^0(C,D)\big)$ and the dimension of the space of bases of a $4$-dimensional vector space adjusted for scaling.
		
		We now take cases based on $d$:
		If $d > 2g-2$ then any divisor $D \in \Pic^d(C)$ is non-special and so $h^0(C,D) = d -g +1$ which gives
		\[
		\dim\mathcal{QK}_{g,d} = (3g-3) + 4(d-g-3) + 15 = 4d - g < 4d.
		\]
		If $d = 2g-2$ then the divisor with maximal sections is $K_C$ for which we have
		\[
		\dim\mathcal{QK}_{g,d} = (3g-3) + 4(g-4) + 15 = 7g - 4 = 4(2g-2) - g + 4 = 4d - g +4,
		\]
		which again, if $g > 4$, is strictly less than $4d$.
		We are left with the case $g\leq 4$ corresponding to the pairs $(g,d) = (2,2), (3,4)$ and $(4,6)$: the latter two we have purposely excluded and the former leading to an empty $H_{g,d}^S$.
		
		We finally treat the case $d < 2g-2$.
		We will apply Theorem \ref{thm:Clifford} and, to do so optimally, we will further decompose $\mathcal{QK}_{g,d}$ into $\mathcal{QK}_{g,d}^{\operatorname{gen}} \sqcup \mathcal{QK}_{g,d}^{\operatorname{hyp}}$, where the latter two are defined as in \eqref{eq:Qcan} with the union ranging over the non-hyperelliptic curves and hyperelliptic curves respectively instead. 
		Before we proceed we remark that the locus of hyperelliptic curves in $\mathcal{M}_g$ has dimension $2g-1$:
		this can be easily computed by counting ramification points, using the Riemann-Hurwitz formula, up to the action of $\PGL_2(\CC)$.
		
		For the general case we have
		\[
		\dim\mathcal{QK}_{g,d}^{\operatorname{gen}} < 3g-3 + 4\left(\frac{d}{2} - 3\right) +15 = 2d + 3g,
		\]
		which is less than or equal to $4d$ when $d \geq \frac{3g}{2}$.
		As for the hyperelliptic case may take $D$ to be a multiple of a hyperelliptic divisor and obtain
		\[
		\dim\mathcal{QK}_{g,d}^{\operatorname{hyp}} = 2g-1 + 4\left(\frac{d}{2} - 3\right) +15 = 2d + 2g + 2
		\]
		which is strictly less than $4d$ when $d > g+1$.
		However, this is always satisfied when $d \geq \frac{3g}{2}$ and $g \geq 3$.
		When $g = 2$, $d < 2g-2 = 2$ in which case $H_{g,d}^S$ is empty.
	\end{proof}
	
	\begin{remark}
		The exclusion of $(g,d) = (3,4)$ and $(4,6)$ in Proposition \ref{prop:QcanonicalNonGen} is necessary:
		every curve of genus and degree $(3,4)$ and $(4,6)$ is a complete intersection of surfaces of degrees $1,4$ and $2,3$ respectively;
		they are thus $\QQ$-canonical by Example \ref{ex:Qcan} (both cases are actually canonical).	
	\end{remark}

	\subsection{Curves on quartic surfaces}\label{subsec:quartics}

	\begin{definition}\label{def:disc}
		Let $S \subset \PP^3$ be a smooth quartic surface with intersection matrix $Q_S$.
		We define the \emph{discriminant of $S$} to be the integer
		\[
		\disc(S) \defeq (-1)^{\rho-1}\det(Q),
		\]
		where $\rho$ is the Picard rank of $S$.	
		Note that, by the Hodge index theorem, $\disc(S)$ is a positive integer. 
	\end{definition}
	
	\begin{lemma}\label{lem:discriminant}
		Let $S \subset \PP^3$ be a smooth quartic surface of Picard rank $2$ and discriminant $r$.
		For any $C \in \Pic(S)$ we have
		\[
		4 C^2 = d^2 - rn^2
		\]
		for some integer $n$ and $d = H\cdot C$.
	\end{lemma}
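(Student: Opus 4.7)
The plan is to exploit the fact that the sublattice $\ZZ H + \ZZ C \subset \Pic(S)$ has index a nonnegative integer $n$, and compare the determinant of its intersection form to that of a basis of $\Pic(S)$.

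First, I would distinguish two cases. Suppose that $H$ and $C$ are linearly independent in $\Pic(S) \otimes \QQ$. Since $\Pic(S)$ has rank $2$, the sublattice $\Lambda \defeq \ZZ H + \ZZ C$ then has finite index $n \in \ZZ_{>0}$ inside $\Pic(S)$. Writing the intersection form on $\Lambda$ in the basis $\{H,C\}$ and using $H^2 = 4$ and $H\cdot C = d$, we get
\[
\det\begin{pmatrix} 4 & d \\ d & C^2 \end{pmatrix} = 4C^2 - d^2.
\]
On the other hand, by the standard relation between a sublattice and its ambient lattice, this determinant equals $n^2 \det(Q_S)$. Since $\rho = 2$, the Hodge index theorem gives signature $(1,1)$ on $\Pic(S)$, so $\det(Q_S) < 0$ and by Definition \ref{def:disc} we have $\det(Q_S) = -r$. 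Combining yields $4C^2 - d^2 = -rn^2$, i.e.\ $4C^2 = d^2 - rn^2$, as desired.

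In the remaining case $H$ and $C$ are linearly dependent over $\QQ$, so $C = tH$ in $\Pic(S)\otimes \QQ$ for some $t \in \QQ$. Then $d = H\cdot C = 4t$ forces $t = d/4$, and consequently $C^2 = 4t^2 = d^2/4$, so $4C^2 = d^2$. This is exactly the statement of the lemma with $n = 0$. No step here is genuinely delicate; the only thing one has to be careful about is the sign convention coming from $(-1)^{\rho-1}$ in Definition \ref{def:disc}, which is precisely what ensures the positivity of $r$ and hence the minus sign in $d^2 - rn^2$.
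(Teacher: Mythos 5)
Your proof is correct, but it takes a genuinely different route from the paper's. The paper invokes the fact that $\Pic(S)/\ZZ H$ is torsion-free (a K3-specific fact cited from Hartshorne's deformation theory book), extends $H$ to a basis $\langle H, D\rangle$ of $\Pic(S)$, writes $C = mH + nD$, and completes the square by brute force, arriving at $4C^2 = (4m + n(H\cdot D))^2 - rn^2 = d^2 - rn^2$; there $n$ is the second coordinate of $C$ and all cases (including $C$ proportional to $H$, i.e.\ $n=0$) are handled uniformly by the single computation. You instead use the lattice-theoretic relation $\det(Q_\Lambda) = [\Pic(S):\Lambda]^2 \det(Q_S)$ for the finite-index sublattice $\Lambda = \ZZ H + \ZZ C$, together with the Hodge index theorem to pin down the sign $\det(Q_S) = -r$, and you treat the degenerate case $C \sim_\QQ tH$ separately as the $n=0$ case. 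The two choices of $n$ agree up to sign (the index of $\ZZ H + \ZZ C$ in $\ZZ H \oplus \ZZ D$ is $|n|$ when $C = mH+nD$), so both prove the same statement. What your approach buys: it avoids needing $H$ to be part of a basis---only torsion-freeness of $\Pic(S)$ itself is used, which holds since $\Pic(S)$ embeds in $H^2(S,\ZZ)$---and it makes transparent the paper's later remark that the lemma extends verbatim to arbitrary smooth surfaces of Picard rank $2$. What the paper's computation buys: it is elementary and self-contained, identifies $n$ explicitly as a coordinate, and needs no case distinction. One small point you should make explicit: the index formula you cite requires $\Pic(S)$ to be a free $\ZZ$-module of rank $2$, which is automatic here because $S$ is a K3 surface, but is worth saying since the lemma's statement only assumes Picard rank $2$.
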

	
	\begin{proof}
		By \cite[ex.\ 20.7, pg.\ 144]{HarDef} $\Pic(S)/\ZZ H$ has no torsion, and so we may choose a basis of $\Pic(S)$ of the form $\langle H,D \rangle$.
		Note that then
		\[
		r = (H. D)^2 - H^2 D^2 = (H. D)^2 - 4 D^2
		\]
		If $C = mH +nD$ we have
		\[
		\begin{aligned}
			4 C^2 &
			= 16m^2 + 8mn (H.D) + n^2(4 D^2)
			= 16m^2 + 8mn (H.D) + n^2((H.D)^2 - r)\\
			&= 16m^2 + 8mn (H.D) + n^2(H.D)^2 - rn^2
			= (4m + n(H.D))^2 - rn^2 = d^2 - rn^2.
		\end{aligned}
		\]
	\end{proof}
	
	Lemma \ref{lem:discriminant} gives us a direct computational way of checking whether a quartic surface of Picard rank $2$ and given discriminant admits curves of prescribed self intersection.
	
	\begin{corollary}\label{cor:Pell}
		Let $S \subset \PP^3$ be a smooth quartic surface of Picard rank $2$ and discriminant $r$ and consider the generalized Pell equations
		\[
		P_{r,k} : x^2 - r y^2 = -4k.
		\]
		If $P_{r,2}$ and $P_{r,0}$ admit no integer solutions then there are no rational or elliptic curves on $S$ respectively.
		
		In particular, if neither $P_{r,2}$ nor $P_{r,0}$ admit integer solutions, then $\NE(S)$ is not a closed cone and the extremal rays of $\NEb(S)$ are irrationally spanned.
	\end{corollary}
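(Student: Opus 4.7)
The first two assertions reduce to a direct computation using Lemma \ref{lem:discriminant}. Since $S \subset \PP^3$ is a smooth quartic, it is a K3 surface, so $K_S \sim 0$ and adjunction gives $C^2 = 2p_a(C) - 2$ for every integral curve $C$ on $S$. In particular, a smooth rational curve satisfies $C^2 = -2$, and a smooth elliptic curve satisfies $C^2 = 0$. Substituting these into the formula $4C^2 = d^2 - rn^2$ from Lemma \ref{lem:discriminant} produces an integer solution to $P_{r,2}$ or $P_{r,0}$ respectively; contrapositively, the non-solvability of these equations rules out the existence of such curves.

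For the cone statement, the plan is to invoke the standard structure of the effective cone of a K3 surface of Picard rank $2$. Because the intersection form on $N^1(S)_\RR$ has signature $(1,1)$, the positive cone $\mathcal{C}^+ = \{x : x^2 > 0,\, x \cdot H > 0\}$ is a strict open convex cone, and its boundary consists of the two null rays $\{x^2 = 0,\, x\cdot H \geq 0\}$. By the first part of the corollary, there are no smooth rational curves on $S$, hence no $(-2)$-classes; then adjunction forces every irreducible curve to have non-negative self-intersection, giving $\NE(S) \subset \overline{\mathcal{C}^+}$. For the reverse containment, I would use Riemann-Roch on the K3 surface in the form $h^0(L) + h^0(-L) \geq \chi(L) = 2 + L^2/2$: for any integral class $L \in \mathcal{C}^+$ the right-hand side is positive, and the sign of $L \cdot H > 0$ excludes $-L$ from being effective, so $L$ itself is effective. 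Taking $\RR$-closures then yields $\NEb(S) = \overline{\mathcal{C}^+}$.

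It remains to analyse the two extremal rays of $\overline{\mathcal{C}^+}$, which lie on $\{x^2 = 0\}$. By Lemma \ref{lem:discriminant} an integral class on this locus corresponds precisely to an integer solution of $d^2 - rn^2 = 0$, i.e., of $P_{r,0}$; since by hypothesis $P_{r,0}$ has no integer solutions, no integral class spans these rays, and they are therefore irrationally spanned. In particular, no effective integral class can lie on the boundary of $\NEb(S)$, so $\NE(S) \subsetneq \NEb(S)$ and $\NE(S)$ is not closed. The main subtlety of this strategy is the identification $\NEb(S) = \overline{\mathcal{C}^+}$, which crucially combines the absence of $(-2)$-curves coming from the first part with the K3-specific Riemann-Roch effectivity argument; once this is in place the Pell-equation condition immediately translates into the geometric conclusion.
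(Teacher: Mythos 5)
Your proof is correct and follows essentially the same route as the paper: the first part is read off from Lemma \ref{lem:discriminant} via the self-intersections $-2$ and $0$, and the cone statement comes from identifying $\NEb(S)$ with the closed positive cone and observing that a rationally spanned null ray would produce an integral solution of $P_{r,0}$. The only difference is that you spell out, via Riemann--Roch on the K3, why integral classes in the positive cone are effective (hence $\NEb(S) = \overline{P}(S)$), a standard fact the paper's proof asserts without justification when it states that the cone of curves ``coincides with the positive cone.''
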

	
	\begin{proof}
		A rational curve $C$ on $S$ satisfies $C^2 = -2$;
		similarly for elliptic curves we have $C^2 = 0$.
		Therefore, the first part follows from readily from Lemma \ref{lem:discriminant}.
		
		As for the second part, if the two equations have no solutions, then every curve on $S$ has positive self-intersection.
		In particular the cone of curves coincides with the positive cone
		\[
		P(S) \defeq \big\{ z \in \Pic(S) \,\big|\, z^2>0,\, H\cdot z >0 \big\} \cup \{0\},
		\]
		which is clearly not a closed cone.
		
		Finally let $R$ be an extremal ray of $\NEb(S) = \overline{P}(S)$ and suppose by contraposition that $R$ is rational.
		Let $z$ be an integral class on $R$.
		Then $z^2 = 0$ which, by Lemma \ref{lem:discriminant}, would imply that there exists an integer solution to $P_{r,0}$, a contradiction. 
	\end{proof}

	\section{Sufficient Conditions for Mori dreamness}
	\label{sec:sufficient}
	
	\subsection{Complete and almost complete intersections}
	
	\begin{definition}
		A subscheme $Y \subset \PP^n$ is called an \emph{almost complete intersection} if its ideal $I_Y \subset \CC[x_0, \ldots, x_n]$ is generated by $\codim(Y) + 1$ polynomials.
	\end{definition}
	
	\begin{proposition}\label{prop:aciMDS}
		Let $C \subset \mathbb P^3$ be a smooth complete or almost complete intersection curve and $X$ be the blowup of $\mathbb P^3$ along $C$.  Then $X$ is a Mori Dream Space.
	\end{proposition}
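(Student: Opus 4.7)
The plan is to verify Definition \ref{def:MDS} directly for $X$, exploiting that $\rho(X) = 2$. Since $X$ is smooth rational, condition \eqref{it:MDS1} is automatic by Remark \ref{rem:MDS}\eqref{it:MDSrem1}, and $N^1(X)_{\RR}$ is spanned by $H \defeq \pi^*\OO_{\PP^3}(1)$ and the exceptional divisor $E$ of $\pi\colon X \to \PP^3$. The strategy is to describe $\Eff(X)$, $\Mov(X)$ and $\Nef(X)$ explicitly, and then exhibit a Mori chamber decomposition with semiample boundary divisors.

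\textbf{Complete intersection case.} Let $C = S_1 \cap S_2$ with $\deg S_i = n_i$ and $n_1 \leq n_2$. Using the Koszul resolution of $\II_C$, I would check that for every $n_1 \leq m < n_2$ every element of $H^0(\II_C(m))$ is divisible by the defining equation of $S_1$, so on $X$ the strict transform $\tilde S_1 \sim n_1 H - E$ is a fixed divisorial component of $|mH - E|$. This identifies $\Eff(X) = \langle E, n_1 H - E\rangle$ with $\tilde S_1$ spanning the extremal ray opposite $E$ and shows $n_1 H - E$ is rigid. For $m = n_2$, the system $|\II_C(n_2)|$ is spanned by the defining equation of $S_2$ and by products $f_1\cdot g$ with $g$ of degree $n_2 - n_1$, whose joint zero locus in $\PP^3$ is exactly $C$; hence $|n_2 H - E|$ is base-point-free on $X$. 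Consequently $n_2 H - E$ is semiample, $\Nef(X) = \Mov(X) = \langle H, n_2 H - E\rangle$ is a single Mori chamber, and $X$ is a Mori Dream Space.

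\textbf{Almost complete intersection case.} Let $I_C = (f_1, f_2, f_3)$ with $\deg f_i = n_i$ and $n_1 \leq n_2 \leq n_3$; the Hilbert--Burch theorem furnishes a resolution
\[
0 \to R(-b_1) \oplus R(-b_2) \to R(-n_1) \oplus R(-n_2) \oplus R(-n_3) \to I_C \to 0
\]
with $b_1 + b_2 = n_1 + n_2 + n_3$, from which the Hilbert function of $\II_C$ can be read off. One again identifies $\Eff(X) = \langle E, n_1 H - E\rangle$ and then determines the smallest integer $m_0$ such that the degree-$m_0$ piece of $I_C$ cuts out $C$ scheme-theoretically, so that $|m_0 H - E|$ is base-point-free on $X$. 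If $|mH - E|$ has no divisorial fixed component for every $n_1 < m \leq m_0$, one concludes as in the CI case. Otherwise, any gap between the boundaries of $\Mov(X)$ and $\Nef(X)$ must correspond to an SQM $X \dashrightarrow X'$ obtained by flipping the strict transform of the residual curve $C'$ from a linkage $C \link{f_i, f_j} C'$ between two generators of $I_C$; the birational model $X'$ can be identified with $\Bl_{C'}\PP^3$ (or an analogous MDS obtained by unprojection), and one concludes by induction on the complexity of the Hilbert--Burch resolution, the base case being the CI argument.

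\textbf{Main obstacle.} The delicate step is the ACI case when the gap between $\Mov(X)$ and $\Nef(X)$ is non-trivial: one must exhibit the SQM explicitly and verify that its target is again an MDS, which rests on the linkage correspondence of Subsection \ref{subsec:linkageAndHilberFlag} and on a careful control of how the Hilbert--Burch data propagate along the flip. A more uniform alternative would be to establish finite generation of the Cox ring $\operatorname{Cox}(X) = \bigoplus_{a,b \geq 0} H^0(\PP^3, \II_C^b(a))$ directly: this ring is essentially the Rees algebra of $\II_C$, which is a polynomial ring in the CI case (Koszul) and known to be finitely generated in the ACI case (Hilbert--Burch); such an approach would bypass the explicit Mori chamber decomposition at the cost of losing the geometric picture.
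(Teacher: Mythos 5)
Your complete intersection argument is essentially correct (it reproduces what the paper records separately in Example \ref{ex:ci}), but the almost complete intersection case---which is the substantive one, and the one the paper actually writes out---has genuine gaps. First, the Hilbert--Burch resolution you invoke exists only when $R/I_C$ has projective dimension $2$, i.e.\ when $C$ is ACM; being an almost complete intersection does not guarantee this, so your starting point already restricts the class of curves. Second, and more seriously, the step ``any gap between the boundaries of $\Mov(X)$ and $\Nef(X)$ must correspond to an SQM obtained by flipping the strict transform of the residual curve $C'$, and $X'$ can be identified with $\Bl_{C'}\PP^3$'' is not an argument. You never prove that the relevant extremal ray is contractible and flippable---this is precisely the point that fails for non-MDS blowups, cf.\ Theorems \ref{thm:mainThmRigid} and \ref{thm:nonMDSquartics}, where the flipping curve also arises from a linkage. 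Moreover the identification of the target with $\Bl_{C'}\PP^3$ is false in general: an SQM of $X$ is isomorphic to $X$ in codimension one, so this identification amounts to producing a Cremona self-link of $\PP^3$ centered at $C$ and resolved by $C'$, which only happens in very special situations. Finally, the proposed ``induction on the complexity of the Hilbert--Burch resolution'' has no precise inductive structure: it is not explained which invariant decreases.

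The fallback via the Cox ring is also flawed, in a way worth understanding: it proves too much. The algebra $\bigoplus_{a,b} H^0\big(\PP^3,\II_C^b(a)\big)$ is the \emph{saturated} (equivalently, since $C$ is a smooth local complete intersection, symbolic) Rees algebra $\bigoplus_b \operatorname{sat}(I_C^b)$, not the Rees algebra $\bigoplus_b I_C^b$. The latter is finitely generated for \emph{any} ideal whatsoever---it is generated in degree one---so if ``Cox ring $=$ Rees algebra'' were correct, every blowup of $\PP^3$ along a curve would be a Mori Dream Space, contradicting Mukai's example and the two main theorems of this paper. The actual content hidden in your reduction is the statement that every power $I_C^b$ is saturated: this holds for complete intersections because $R/I_C^b$ is Cohen--Macaulay, but for almost complete intersections it is a nontrivial assertion that you do not address, and its failure for general curves is exactly where non-Mori-dreamness lives. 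By contrast, the paper's proof sidesteps all cone computations: writing $I_C=(f_1,f_2,f_3)$, it takes $E$, the pullbacks $H_1,H_2$ of two general hyperplanes, and the strict transforms $S_1,S_2,S_3$ of the surfaces $\{f_i=0\}$, and verifies the two hypotheses of Ito's criterion \cite[Theorem~1.3]{Ito}: the cones $\langle E,H_1,H_2\rangle$ and $\langle S_1,S_2,S_3\rangle$ meet only at $0$, and $E\cap H_1\cap H_2=S_1\cap S_2\cap S_3=\emptyset$ (a general line misses $C$, and since the $f_i$ generate $I_C$ their strict transforms have no common point even over $C$). This handles the CI and ACI cases uniformly, with no need to know the Mori chamber decomposition, which the paper only describes afterwards, in the CI case, as an illustration.
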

	
	\begin{proof}
		We only treat the almost complete intersection case, the complete intersection being similar.
		Denote by $E$ be the exceptional divisor of the blowup 
		and let $H_1$ and $H_2$ be the pullback of general hyperplanes of $\PP^3$.
		Since $C$ is an almost complete intersection there exists homogenous polynomials $f_i$ so that $I_C = (f_1,f_2,f_3)$.
		Denote $S_i$ the strict transforms of the surfaces $\{f_i = 0\}$.
		Then
		\begin{enumerate}
			\item $\langle E,H_1,H_2 \rangle \cap \langle S_1,S_2,S_3 \rangle=\{ 0\}$  and
			\item $E\cap H_1 \cap H_2 = S_1 \cap S_2 \cap S_3 = \emptyset$.
		\end{enumerate}
		It follows from \cite[Theorem~1.3]{Ito} that $X$ is a Mori Dream Space.
	\end{proof}
	
	\begin{example}[Complete intersections]\label{ex:ci}
		Let $C \subset \mathbb P^3$ be a smooth complete intersection $\{f_1 = f_2 = 0\}$, where $f_i$ are homogeneous polynomials of degrees $n_1$ and $n_2$ respectively, where $n_1\leq n_2$.
		Then 
		\[
		d = n_1n_2 \quad \text{ and } \quad g = \frac{1}{2}n_1n_2(n_1 + n_2 - 4) + 1.
		\]
		By \cite[Exercise 1.3, pg.\ 7]{HarDef} complete intersection curves span an irreducible component of $H_{g,d}^S$.	
		
		The Mori chamber decomposition is particularly easy in this case. Indeed every movable divisor on $X$ is also nef and so the only SQM of $X$ is the identity.
		We have
		\[
		\Eff(X) = \langle E, S_1 \rangle \quad \text{ and } \quad \Mov(X) = \Nef(X) = \langle H , S_2 \rangle.
		\]
		the contraction given given by the linear system $|S_2|$ is a fibration to $\PP^1$ if $n_1 = n_2$;
		if $n_1 < n_2$ then it is divisorial contracting $S_1$; it is given by
		\[
		\begin{array}{ccc}
			\PP^3 & \rmap &  \PP(1,1,1,1,n_2-n_1)\\
			(x_0:\ldots:x_3) & \mapsto & \left(x_0f_1:x_1f_1:x_2f_1:x_3f_1:f_2\right),
		\end{array}
		\]
		whose image is the hypersurface 
		\[
		(zf_1-f_2 = 0) 
		\]
		of degree $n_2$, where $z$ is the variable of weight $n_2-n_1$.
	\end{example}
	
	\begin{example}[Almost complete intersections]\label{ex:almostCI}
		A systematic way of producing almost complete intersection curves is by performing a linkage starting from a complete intersection curve.
		In fact, \emph{every ACM} almost complete intersection is obtained like that (see \cite[Section 3]{liaison}).
		
		Let $C_0$ be a smooth complete intersection $(f_1 = f_2 =0)$ of degrees $n_1 \leq n_2$.
		Choose surfaces $S_i$ containing $C_0$ of degrees $m_1$ and $m_2$ with $m_1\leq m_2$, so that $S_i = (g_i = 0)$, with
		\[
		g_1 = \lambda_2 f_1 - \lambda_1 f_2 \quad \text{ and } \quad g_2 = \mu_2 f_1 - \mu_1 f_2.
		\]
		The curve $C$ obtained by the linkage $C \link{S_1,S_2} C_0$ is cut out by the $2\times 2$ minors of the matrix
		\[
		\begin{pmatrix}
			f_1 & \lambda_1 & \mu_1\\
			f_2 & \lambda_2 & \mu_2
		\end{pmatrix},
		\]
		that is $I_C = (g_1,g_2,\lambda_1\mu_2 - \lambda_2\mu_1)$.
		Furthermore we have
		\[
		d + d_0 = m_1m_2 \quad \text{ and } \quad g - g_0 = \frac{1}{2}(m_1 + m_2 - 4)(d - d_0),
		\]
		where $(g_0,d_0)$ denote the genus of degree of $C_0$.
		By Theorem \ref{thm:ACMareOpen} such curves form an open (and smooth) subset of the corresponding Hilbert scheme.	
	\end{example}
	
	\subsection{Curves in low degree surfaces}\label{subsec:curvesInLowDegreeSurf}
	
	We begin with an auxiliary lemma:
	
	\begin{lemma}\label{lem:crepantDoublePts}
		Let $S \subset \AA^3$ be a surface with a double point $p$
		and let $\pi\colon (U,S_U) \to (\AA^3,S)$ be the blowup at $p$.
		Then $\pi$ is $(\AA^3,S)$-crepant and $S_U$ has again at worst double points.
	\end{lemma}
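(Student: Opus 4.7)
The plan is to verify the two assertions separately, both by standard local computations.

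For the crepancy, the standing formulas for blowing up a smooth point in a smooth threefold give $K_U = \pi^* K_{\AA^3} + 2E$, where $E$ is the exceptional $\PP^2$. Because $p$ is a double point of $S$, its multiplicity satisfies $\mult_p(S) = 2$, so by definition of the strict transform $\pi^* S = S_U + 2E$. Subtracting,
\[
K_U + S_U = \pi^* K_{\AA^3} + 2E + \pi^* S - 2E = \pi^*(K_{\AA^3} + S),
\]
which is exactly the log-crepancy of $\pi$ for the pair $(\AA^3, S)$.

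For the multiplicity bound on $S_U$, away from $E$ the map $\pi$ is an isomorphism, so $S_U$ is smooth there. I would then choose coordinates centered at $p$ so that $S = \{f = 0\}$ with $f = f_2 + f_3 + f_4 + \cdots$, where $f_k$ is a homogeneous polynomial of degree $k$ and $f_2 \neq 0$ by the double point hypothesis. In a standard affine chart of the blowup, obtained via $(x,y,z) \mapsto (x, xy, xz)$, the total transform is $x^2 \tilde f$ with
\[
\tilde f(x,y,z) = f_2(1,y,z) + x f_3(1,y,z) + x^2 f_4(1,y,z) + \cdots,
\]
and $S_U = \{\tilde f = 0\}$. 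The other two charts are completely analogous by symmetry of the construction.

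The heart of the argument is then the elementary multiplicity estimate at an arbitrary point $q = (0, y_0, z_0) \in S_U \cap E$. Setting $g_k(y,z) \defeq f_k(1, y+y_0, z+z_0)$, we have $\tilde f = \sum_{i \geq 0} x^i\, g_{i+2}$ after translation. The multiplicity of $\tilde f$ at the origin is the minimum over $i$ of $i + \mult_{(0,0)} g_{i+2}$, and already the $i=0$ summand gives
\[
\mult_q(S_U) \leq \mult_{(0,0)} g_2.
\]
Since $g_2$ is a nonzero polynomial of degree at most $2$ in $(y,z)$ (it is nonzero because $f_2 \neq 0$ determines its restriction to $\{x = 1\}$), its multiplicity at any point is at most $2$. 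Hence $\mult_q(S_U) \leq 2$ for every $q \in S_U \cap E$, which together with the smoothness of $S_U$ away from $E$ gives the desired statement. The only subtle step is checking that $g_2 \not\equiv 0$ in every chart, which is immediate from the nonvanishing of $f_2$.
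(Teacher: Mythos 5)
Your proof is correct and follows essentially the same route as the paper's: the crepancy comes from the cancellation of the discrepancy $2E$ against the multiplicity-two correction in the total transform, and the multiplicity bound comes from the same chart computation $\pi^*f = x^2\tilde f$ with $\tilde f = f_2(1,y,z) + x f_3(1,y,z) + \cdots$. If anything, your translation argument at an arbitrary point of $S_U \cap E$ makes explicit a step the paper leaves implicit when it concludes that $\{\tilde f = 0\}$ has at worst double points.
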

	
	\begin{proof}
		This is a local calculation.
		We may assume that $p = (0,0,0)$ and $S = \{F=0\}$, with
		\[
		F = f_2(x_1,x_2,x_3) + \sum_{i=3}^n f_i(x_1,x_2,x_3),
		\]
		and $f_i \in \CC[x_1,x_2,x_3]$ are homogeneous of degree $i$;
		moreover there exists a neighbourhood $\AA^3 \isom U_0 \subset U$ so that $\pi$ is locally given by
		\[
		(y_1,y_2,y_3) \mapsto (y_1,y_1y_2,y_1y_3).
		\]
		Then $\pi^*F = y_1^2 F_U$, where
		\[
		F_U = f_2(1,y_2,y_3) + \sum_i^n y_1^if_i(1,y_2,y_3)
		\]
		and so $S_U = \{F_U=0\}$ has again at worst double points.
		Furthermore 
		\[
		\pi^*(K_{\AA^3} + S) = \pi^*(S) = S_U + 2E = K_U + S_U,
		\]
		i.e.\ $\pi$ is $(\AA^3,S)$-crepant.
	\end{proof}
	
	\begin{proposition}\label{prop:lowDegreeMDS}
		Let $C\subset\PP^3$ be a smooth curve contained in a surface $S$ of degree $n \leq 3$, and denote by $f\colon X \to \PP^3$ the blowup along $C$.
		
		Then, for any $0 \ll \delta <1$, $(X,\delta S)$ is a dlt log-Fano pair;
		in particular $X$ is a Mori Dream Space.	
	\end{proposition}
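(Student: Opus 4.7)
The plan is to exhibit $(X, \delta\tilde{S})$, with $\tilde{S}$ the strict transform of $S$ on $X$, as a dlt log-Fano pair for $\delta<1$ sufficiently close to $1$; by \cite[Corollary~1.3.2]{BCHM} this forces $X$ to be a Mori Dream Space. Writing $H' = f^*H$ for $H$ a hyperplane of $\PP^3$ and $E$ for the exceptional divisor, the reducedness of $S$ along $C$ yields $\tilde{S} \sim nH' - E$ in $\Pic(X)$, whence
\[
-(K_X + \delta\tilde{S}) \;=\; (4-n\delta)H' - (1-\delta)E \;=\; (4-n)H' + (1-\delta)\tilde{S}.
\]
The first form is convenient for numerical positivity tests, while the second exhibits $-(K_X + \delta\tilde{S})$ as the sum of a pulled-back ample class (since $n \leq 3$) and an effective $\QQ$-divisor.

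For the ampleness of $-(K_X + \delta\tilde{S})$ I apply Kleiman's criterion, splitting irreducible curves $\Gamma \subset X$ into three types. On a fiber $\ell$ of the $\PP^1$-bundle $E \to C$ one computes $-(K_X + \delta\tilde{S})\cdot\ell = 1 - \delta > 0$. If $\Gamma$ is the strict transform of a curve $\Gamma_0\subset\PP^3$ of degree $d_0$ not lying in $S$, then B\'ezout gives $E \cdot \Gamma \leq \Gamma_0\cdot S = nd_0$, so $-(K_X + \delta\tilde{S})\cdot\Gamma \geq (4-n)d_0 > 0$. The delicate case is $\Gamma_0 \subset S$, where B\'ezout fails: the identity $\tilde{S} \sim nH'-E$ gives $E\cdot\Gamma = nd_0 - \tilde{S}\cdot\Gamma$, and computing $\tilde{S}\cdot\Gamma$ via the map $\tilde{S}\to S$ (passing to a resolution if $S$ is singular) rewrites this as $E\cdot\Gamma = C\cdot_S\Gamma_0$. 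Since $H|_S$ is ample, for $K \gg 0$ the divisor $KH|_S - C$ is ample on $S$, and therefore $C\cdot_S\Gamma_0 \leq K d_0$ uniformly in $\Gamma_0$. The resulting intersection is at least $d_0\bigl[(4-n\delta) - (1-\delta)K\bigr]$, which is positive once $\delta$ is close enough to $1$, with the threshold uniform in $\Gamma$.

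The dlt property follows from Lemma \ref{lem:crepantDoublePts}. After replacing $S$ by the irreducible component containing $C$, we may assume $S$ is irreducible of degree at most $3$, hence has at worst isolated double points (this is where $n \leq 3$ is crucial); the same holds for $\tilde{S}$ since the generic multiplicity of $S$ along $C$ is $1$. Iteratively blowing up these double points of $\tilde{S}$ in $X$ produces, by the lemma, a $(X,\tilde{S})$-crepant proper birational morphism $\pi\colon\bar{X}\to X$ with $\pi_*^{-1}\tilde{S}$ smooth; each blowup contributes a discrepancy of $2(1-\delta)$ for $(X,\delta\tilde{S})$, so aggregating yields
\[
K_{\bar{X}} + \delta\pi_*^{-1}\tilde{S} \;=\; \pi^*(K_X + \delta\tilde{S}) + 2(1-\delta)\sum_i E_i.
\]
Every $\pi$-exceptional divisor therefore has log discrepancy at least $2(1-\delta)+1 > 1$ with respect to $(X,\delta\tilde{S})$, and so $(X,\delta\tilde{S})$ is klt, in particular dlt.

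The principal technical difficulty is the case $\Gamma_0\subset S$ in the ampleness step: the usual B\'ezout bound on $E \cdot \Gamma$ is unavailable, and one must instead invoke the ampleness of $H|_S$ to obtain a uniform linear bound on $C\cdot_S\Gamma_0$ in terms of $d_0$. Once this bound is in place, the rest of the argument is a routine combination of numerical positivity and the crepancy formula for the double points of $\tilde{S}$ provided by Lemma \ref{lem:crepantDoublePts}.
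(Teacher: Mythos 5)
Your overall strategy (exhibit $(X,\delta\tilde{S})$ as dlt log-Fano and invoke \cite[Corollary~1.3.2]{BCHM}) is the same as the paper's, but your proof has a genuine gap in the dlt step. You assert that an irreducible surface of degree $n\leq 3$ ``has at worst isolated double points,'' and this is false: an irreducible cubic can have an isolated \emph{triple} point (the cone over a smooth plane cubic), and it can be non-normal, in which case it is singular along a line. Both situations genuinely occur for surfaces of minimal degree containing $C$ --- for instance, a smooth complete intersection of a cubic cone with a quartic lies on no quadric, so the cone cannot be traded for a better surface. In these cases Lemma \ref{lem:crepantDoublePts} simply does not apply, and your crepant resolution by iterated point blowups collapses. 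This is precisely why the paper's proof splits into three cases: for the triple point it blows up the vertex and must enlarge the boundary to $\Delta_1 = S_1 + E_1$ (the point blowup is \emph{not} crepant for the reduced pair there), and for the cubic double along a line it blows up the line and moreover needs a $(K+\Delta)$-trivial flop before it can compare with $X$. Neither of these arguments is a routine repetition of the double-point case, so your proof as written only establishes the proposition for $S$ with isolated double points.

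On the ampleness step your argument is essentially correct but the ``principal technical difficulty'' you identify is self-inflicted, and your treatment of it is itself shaky when $S$ is singular. The paper's second rewriting, $-(K_X+\delta\tilde{S}) = (4-n)H' + (1-\delta)(nH'-E)$, already finishes the job: $(4-n)H'$ is the pullback of an ample divisor (here $n\leq 3$ enters) and $nH'-E$ is $f$-ample, so the sum is ample once $1-\delta$ is small --- no case analysis over curves is needed. By contrast, your uniform bound $C\cdot_S\Gamma_0 \leq K d_0$ rests on ``$KH|_S - C$ is ample on $S$,'' which presupposes that $C$ is $\QQ$-Cartier on $S$; this fails, e.g., for a ruling line on the cone over an elliptic curve. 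The parenthetical ``passing to a resolution'' does not repair this: on a resolution the pullback of $H|_S$ is only nef and big, so ``ample for $K\gg 0$'' is no longer available, and one must separately control the curves on which that pullback is trivial (exactly the fibers of $E\to C$ contained in $\tilde{S}$, which exist whenever $S$ is singular at a point of $C$). Finally, your trichotomy omits irreducible curves $\Gamma\subset E$ dominating $C$: these have $f(\Gamma)=C\subset S$, yet are neither fibers nor contained in $\tilde{S}$ in general, so neither your case 2 nor your case 3 computation applies to them (they are easily handled via $\tilde{S}\cdot\Gamma\geq 0$, but this must be said). I would recommend replacing the whole Kleiman analysis by the paper's one-line decomposition and concentrating the effort on the missing singular cases of the dlt step.
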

	
	\begin{proof}
		Assuming that $(X,\delta S)$ is dlt and log-Fano for some $0<\delta <1$, the result follows from \cite[Corollary 1.3.2]{BCHM}.
		Note that 
		\[
		-(K_X + \delta S) = (4-n\delta)H - (1-\delta)E = (4-n)H + (1-\delta)(n H - E);
		\]
		with $(4-n)H$ being the pullback of an ample divisor and $nH-E$ being $f$-ample, the sum is ample for a sufficiently large $\delta<1$.
		Thus we are only left with showing that $(X,\delta S)$ is dlt.
		Without loss of generality we may assume that $S$ is irreducible.
		We distinguish cases based on the singularities of $S$.
		
		First assume that $S$ has only isolated double points.
		Let $(X_1,S_1) \to (\PP^3,S)$ be an embedded log resolution obtained  by repeatedly blowing up along the singular points of $S$.
		Then, by Lemma \ref{lem:crepantDoublePts}, $(X_1,S_1) \to (\PP^3,S)$ is crepant.
		Denote by $p\colon (W,S_W) \to (X_1,S_1)$ the blowup of $X_1$ along the strict transform of $C$ with exceptional divisor $E_W$.
		We then have
		\[
		K_W = p^*K_{X_1} + E_W \quad \text{ and } \quad S_W = p^*S_1 - E_W,
		\]
		where $S_W$ denotes the strict transform of $S_1$.
		Combining we get that $p\colon (W,S_W) \to (X_1,S_1)$ is crepant too.
		Thus from the commutative diagram
		\[
		\xymatrix@R=.2cm@C=.2cm{
			& (W, S_W) \ar[ld]_p \ar[rd]^q\\
			(X_1, S_1)	\ar[rd] && (X, S) \ar[ld]\\
			& (\PP^3, S)
		}
		\]
		we deduce that $q$ is crepant.
		Finally $(W,S_W)$ being log smooth implies that $(X,S)$ is dlt and so is $(X,\delta S)$ for any $0\leq \delta \leq 1$.
		
		Assume now that $S$ has an isolated triple point, in which case $n=3$ and $S$ is a cone over a smooth plane cubic curve.
		Let $X_1 \to \PP^3$ be the blowup along the vertex of the cone with exceptional divisor $E_1$.
		Similarly to the previous case we get the commutative diagram 
		\[
		\xymatrix@R=.2cm@C=.2cm{
			& (W, \Delta_W) \ar[ld] \ar[rd]^q\\
			(X_1, \Delta_1)	\ar[rd] && (X, S) \ar[ld]\\
			& (\PP^3, S)
		}
		\] 
		where this time $\Delta_1 = S_1 + E_1$ and $\Delta_W$ is the strict transform of $\Delta_1$.
		Again we deduce that $q$ is crepant and $(W,\Delta_W)$ is log smooth, and we conclude similarly.
		
		Finally assume that $S$ does not have isolated singularities.
		Then $n=3$ and $S$ is double along a line $l$.
		Let $p\colon X_1 \to \PP^3$ be the blowup along $l$ with exceptional divisor $E_1$.
		This time we get a diagram
		\[
		\xymatrix@R=.4cm@C=.2cm{
			(W_1, \Delta_{W_1}) \ar[d]_p \ar@{..>}[rr]^{\chi}&&(W,\Delta_W) \ar[d]^q\\
			(X_1, \Delta_1)	\ar[rd] && (X, S) \ar[ld]\\
			& (\PP^3, S)
		}
		\] 
		where $\Delta_1 = S_1 + E_1$ and $\Delta_{W_1}$ is the strict transform of $\Delta_1$ under $p$, $\Delta_W = \chi_*(\Delta_{W_1})$ and $\chi$ is a $(K_{W_1}+\Delta_{W_1})$-trivial flop.
		Once again we deduce that $q$ is crepant and conclude similarly.
	\end{proof}

	\begin{example}[Curves on quadrics]\label{ex:curvesOnQuadrics}
		By \cite[IV, Theorem 6.4]{Hartshorne} if $C$ is a space curve of genus and degree $(g,d)$ with
		\begin{equation}\label{eq:gdQuadric}\tag{$\star$}
			\def\arraystretch{1.7}
			g = 
			\left\{
			\begin{array}{ll}
				\frac{1}{4}d^2  - d +1	& \text{ if $d$ is even}\\
				\frac{1}{4}(d^2-1)  - d +1& \text{ if $d$ is odd}
			\end{array}
			\right.
		\end{equation}
		then $C$ is contained in a quadric;
		the former case is realized by curves of bidegree $(a,a)$ while the latter of type $(a,a+1)$.
		Note that the latter ones are never complete intersections, they are however ACM curves (c.f.\ \cite[Exercise 8.1]{HarDef}).
		Combining with Proposition \ref{prop:lowDegreeMDS} we get that, for any $d$ and $g$ as in \eqref{eq:gdQuadric}, and $C \in H_{g,d}^S$ then $X$ is an MDS;
		if moreover $d$ is odd, then we are not in the setting of Example \ref{ex:ci}.
		
		More generally we can define the sets
		\[
		W_{(a,b)} \defeq 
		\overbar{
			\left\{ 
			[C] \in H^S_{g,d} \, \middle| \, 
			\begin{array}{c}
				C \subset Q \text{ smooth quadric}\\
				\text{with } \OO_Q(C) \isom \OO_Q(a,b)
			\end{array}
			\right\}
		}.
		\]
		By \cite[Proposition 4.11]{Nasu06}, if 
		\[
		d = a + b > 4 \quad \text{ and } \quad g = (a-1)(b-1) > 2d-8
		\]
		then $W_{(a,b)}$ is an irreducible component of $H^S_{g,d}$,
		and for any $[C] \in W_{(a,b)}$, $X_C$ is an MDS.
		Note that the two conditions are not necessary:
		for $(a,b) = (3,5)$ we have $(g,d) = (8,8)$;
		while $g \leq 2d-8$, $W_{(3,5)}$ coincides  with $H_{8,8}^S$.		
	\end{example}

	\begin{example}[Curves on cubics]\label{ex:curvesOnCubics}
		Similarly to the previous example, given a septuple of numbers $t = (k; m_1,\dots,m_6)$ satisfying
		\[\def\arraystretch{1.4}
		\left\{
		\begin{array}{l}
			k > m_1 \geq m_2 \geq \ldots \geq m_6 \geq 0, \quad k \geq m_1 + m_2 + m_3\\
			d = 3k - \sum_{i=1}^{6}m_i, \quad \text{ and } \quad g = \binom{k-1}{2} - \sum_{i=1}^{6}\binom{m_i}{2}
		\end{array}
		\right.
		\] 
		we may define the sets
		\[
		W_{t} \defeq 
		\overbar{
			\left\{ 
			[C] \in H^S_{g,d} \, \middle| \, 
			\begin{array}{c}
				C \subset T \text{ smooth cubic}\\
				\text{with } \OO_T(C) \isom \OO_T(t)
			\end{array}
			\right\},
		}
		\]
		which, for $d>9$, have dimension $\dim W_t = d +g +18$.
		
		By \cite[Lemma 4.3]{Nasu06}, if $d<12$ then $h^1(\II_C(3)) = 0$ while, if $d \geq 12$,
		\[
		h^1(\II_C(3)) = \#\{i\,|\, b_i = 2\} + 3\#\{i\,|\, b_i = 1\} + 6\#\{i\,|\, b_i = 0\}.
		\]
		If $t$ is so that $h^1(\II_C(3)) = 0$, then $H_{g,d}^S$ is smooth at every point of $W_t$.
		If $\dim W_t \geq 4d$, or equivalently $g > 3d-19$, then $W_t$ in an irreducible component of $H_{g,d}^S$ (see \cite[Proposition 4.4]{Nasu06}).
		On the other hand, if $h^1(\II_C(3)) = 1$ then $H_{g,d}^S$ is non-reduced at every point of $W_t$;
		still if $d>9$ and $g \geq 3d-18$ then $W_t$ is an irreducible component of $\big(H_{g,d}^S\big)_{\red}$ (see \cite[Theorem 1.2]{Nasu06}).
		
		Note that we can produce infinitely many types $t$, corresponding to different pairs $(g,d)$, satisfying any of the two conditions above:
		for example choosing
		\[
		t = (k; 3,3,3,3,3,3) \quad \text{ or } \quad t = (k; 3,3,3,3,3,2)
		\] 
		we land in the first and second cases respectively;
		then for $k$ sufficiently large, the conditions $d>9$ and $g > 3d-19$ are always satisfied.
	\end{example}

	\begin{remark}
		In both Examples \ref{ex:curvesOnQuadrics} and \ref{ex:curvesOnCubics} the genus of the curve grows quadratically with $(a,b)$ and $k$ respectively, while the degree grows linearly.
		Therefore the conditions $g> 2d-8$ and $g > 3d-19$ are always satisfied for sufficiently large pairs $(a,b)$ and $k$ respectively.
		In particular we have components of $H_{g,d}^S$ with $g$ and $d$ arbitrarily large, so that the blowup of $\PP^3$ along a general element in the component is an MDS. 
	\end{remark}

	\section{Components of $H_{g,d}^S$ with obstructions to Mori dreamness}
	\label{sec:obstruction}
	
	In this section we study two classes of space curves that allow us to retain some control on the various cones of $X$, their blowup $X\to \PP^3$: curves obtained by \emph{super-rigid linkage} and curves with \emph{extremal surfaces}.
	They both share a common feature: all external curves relevant to the birational geometry of $X$ lie on one surface.
	
	We begin this section with two auxiliary lemmas.
	
	\begin{lemma}\label{lem:nefBs}
		Let $X$ be a projective $\QQ$-factorial variety, $H$ an ample divisor and $S$ a hypersurface on $X$.
		Suppose $D_1, D_2$ are divisors so that $H \prec D_1 \prec S \prec D_2$.
		Then
		\begin{enumerate}
			\item\label{it:nef} for any curve $\gamma\subset X$, 
			\[
			D_1 \cdot \gamma \leq 0 \implies S \cdot \gamma < 0;
			\]
			in particular $\gamma \subset S$.
			\item\label{it:Bs} if $D_2$ is effective then it cannot contain $S$ in its base locus.
		\end{enumerate}
	\end{lemma}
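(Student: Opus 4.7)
For part~(1), I would compute directly. Since $H \prec D_1 \prec S$, write $D_1 = r_1 H + r_2 S$ with $r_1, r_2 > 0$. For any curve $\gamma$,
\[
D_1 \cdot \gamma = r_1 (H \cdot \gamma) + r_2 (S \cdot \gamma),
\]
and $H \cdot \gamma > 0$ by ampleness. Thus $D_1 \cdot \gamma \leq 0$ forces $S \cdot \gamma < 0$; since a curve intersects an effective divisor non-negatively unless it is contained in its support, this yields $\gamma \subset S$.

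For part~(2), I would argue by contradiction. Assume $D_2$ is effective and $S \subset \Bs(|D_2|)$; then every effective representative of $|D_2|$ contains $S$ as a fixed component, so the class $D_2 - S$ is effective as well. From $D_1 \prec S \prec D_2$ I write $S = s_1 D_1 + s_2 D_2$ with $s_1, s_2 > 0$, which rearranges to
\[
s_2 (D_2 - S) \equiv (1 - s_2) S - s_1 D_1.
\]
The key observation is that $D_1 = a H + b S$ with $a, b > 0$ is the sum of an ample and an effective divisor, hence lies in the interior of the pseudo-effective cone (i.e.\ $D_1$ is big). When $s_2 \geq 1$, the right hand side equals $-[(s_2 - 1) S + s_1 D_1]$, the negative of a class sitting in the interior of the pseudo-effective cone (since $s_1 D_1$ is big), and therefore lies strictly outside this salient cone; this contradicts the pseudo-effectivity of the left hand side.

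When $s_2 < 1$, substituting $D_1 = aH + bS$ yields $s_2(D_2 - S) \equiv (1 - s_2 - s_1 b) S - s_1 a H$; intersecting with $H^{n-1}$, where $n = \dim X$, and using effectivity of the left hand side forces $1 - s_2 - s_1 b > 0$. I would then iterate the subtraction of $S$ until I reach a maximal integer $k \geq 1$ for which $D_2 - kS$ admits an effective representative $D'$ not containing $S$ as a component; restricting $D'$ to $S$ produces an effective divisor on $S$ whose intersection with the ample class $H|_S$ is forced to be negative through the positivity estimates already extracted, yielding the final contradiction. The main obstacle is precisely this sub-case $s_2 < 1$, where a purely numerical argument on $X$ is not decisive and one must descend to the geometry on $S$ to conclude.
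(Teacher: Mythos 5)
Your part~(1) is correct and is exactly the paper's argument: write $D_1 = r_1H + r_2S$ with $r_1,r_2>0$, use $H\cdot\gamma>0$ to force $S\cdot\gamma<0$, and conclude $\gamma\subset S$.

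For part~(2), your case $s_2\geq 1$ is fine and is in fact a clean shortcut the paper does not use: there the class $D_2-S$ is minus a big class, hence not pseudo-effective, and the assumption $S\subseteq\Bs(D_2)$ dies immediately. The genuine gap is in the case $s_2<1$, which is the main case, and it occurs exactly at the step you flag as the "main obstacle". After reaching the maximal $k$ with $D'=D_2-kS$ effective and not containing $S$, restricting to $S$ cannot produce a contradiction: numerically $D'\equiv cS-eH$ with $c,e>0$ (your own $H^{n-1}$-intersection argument gives $c>0$), so
\[
D'\cdot S\cdot H^{n-2} \;=\; c\,S^2H^{n-2}-e\,S\cdot H^{n-1},
\]
whose sign is indeterminate unless one knows $S^2H^{n-2}\leq 0$ --- and nothing in the hypotheses gives this (note that $S=s_1D_1+s_2D_2$ is big-plus-effective, hence big, so positivity of $S^2H^{n-2}$ is entirely possible). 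Indeed, no contradiction is available on $S$ at all: $D'$ being effective and not containing $S$ makes $D'|_S$ effective, which is perfectly consistent. The contradiction must instead come from recombining upward, which is what the paper does: with $D_3:=D_2-kS$ as above ($k\geq 1$ since $S\subseteq\Bs(D_2)$ by assumption, and $k$ maximal so that $S\not\subseteq\Bs(D_3)$), one checks from effectivity of $D_2$ and $D_3$ that in the expression $D_2\equiv\alpha H+\beta S$ one has $\alpha<0$ and $\beta>k>0$, whence $D_2 = a_2H+b_2D_3$ with $a_2,b_2>0$, i.e.\ $H\prec D_2\prec D_3$. Since $H$ is ample and $S\not\subseteq\Bs(D_3)$, the base locus of the positive combination $a_2H+b_2D_3$ cannot contain $S$, contradicting $S\subseteq\Bs(D_2)$. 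This recombination argument is uniform in $s_2$, so once you adopt it your case distinction becomes unnecessary.
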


	\begin{proof}\leavevmode	
		By assumption there exist $a_1, b_1 > 0$ so that $D_1 = a_1H + b_1S$.
		Since $H$ is ample $H\cdot \gamma >0$ for any curve $\gamma$;
		thus, if $D_1\cdot \gamma \leq 0$ for some curve $\gamma$, we have
		\[
		b_1 S\cdot\gamma <  (a_1H + b_1S)\gamma = D_1 \cdot \gamma \leq 0 \implies S\cdot\gamma <0.
		\]
		This is \eqref{it:nef}.
		
		As for \eqref{it:Bs}, if $D_2$ is effective and contains $S$ in its base locus then, for some integer $k$, $D_3 \defeq D_2 - kS$ is effective and does not contain $S$ in its base locus.
		But then $H \prec D_2 \prec D_3$ and so $D_2 = a_2H + b_2D_3$ for some $a_2, b_2 > 0$.
		Finally $H$ being ample and $D_3$ not containing $S$ in its base locus, neither does $D_2$; a contradiction.
	\end{proof}
	
	Lemma \ref{lem:nefBs} implies the following in the $2$-dimensional case:
	
	\begin{lemma}\label{lem:zeroCurvesOnBoundry}
		Let $S$ be a smooth surface and $C \subset S$ a curve with $C^2\leq 0$.
		Then, for any ample divisor $H$ and any $\epsilon >0$, $C-\epsilon H$ is not effective; 
		equivalently $C$ lies on the boundary of $\NEb(S)$.
	\end{lemma}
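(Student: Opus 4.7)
The plan is to prove the contrapositive by a direct intersection-theoretic argument on the surface: assume that $C - \epsilon H$ is effective for some ample $H$ and some $\epsilon > 0$, and derive a contradiction with the hypothesis $C^2 \leq 0$. Concretely, I would write $C - \epsilon H \sim D$ for some effective divisor $D$, and then decompose $D = m C + D'$, where $m \in \ZZ_{\geq 0}$ is the multiplicity of $C$ in $D$ and $D'$ is effective with $C \not\subset \Supp(D')$. This reduces the lemma to a trichotomy on $m$, which is essentially the $2$-dimensional specialization of the argument in Lemma \ref{lem:nefBs}.

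The three cases are then handled by intersecting the rearranged equivalence $(1-m)C \sim D' + \epsilon H$ with carefully chosen classes. If $m = 0$, intersecting with $C$ gives $C^2 = C\cdot D' + \epsilon\, C\cdot H > 0$, since $C$ and $D'$ share no irreducible components (so $C\cdot D' \geq 0$) and $C\cdot H > 0$ by ampleness of $H$. If $m = 1$, the equivalence becomes $0 \sim D' + \epsilon H$, and intersecting with $H$ yields $0 = D' \cdot H + \epsilon H^2 > 0$, using $H^2 > 0$. If $m \geq 2$, then $-C$ is $\QQ$-linearly equivalent to a positive combination of the effective $D'$ and the ample $\epsilon H$, so intersecting with $H$ forces $C\cdot H < 0$, which is impossible for an effective curve and an ample class.

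For the equivalent reformulation, I would invoke the standard fact that on a smooth projective surface the pseudo-effective cone and the closure of the cone of curves coincide via the intersection pairing; under this identification, the non-effectiveness of $C - \epsilon H$ for every ample $H$ and every $\epsilon > 0$ is exactly the statement that $[C]$ admits no small perturbation towards an interior (ample) direction while remaining in $\NEb(S)$, i.e.\ that $[C] \in \partial \NEb(S)$.

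The only mild subtlety is the case in which $C$ itself appears as a component of the effective representative $D$; the trichotomy on $m$ is tailored precisely to isolate and dispatch this case. No step is substantially harder than the others, and the entire argument is a clean surface-level specialization of the "$H \prec D_1 \prec S \prec D_2$" mechanism already developed in Lemma \ref{lem:nefBs}.
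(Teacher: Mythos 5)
Your proof is correct, but it follows a genuinely different (more self-contained) route than the paper's. The paper disposes of the lemma in two lines by citing Lemma \ref{lem:nefBs}\eqref{it:Bs}: since $(C-\epsilon H)\cdot C = C^2 - \epsilon\, H\cdot C < 0$, an effective representative of $C-\epsilon H$ would have to contain the irreducible curve $C$ in its base locus, and since $H \prec C \prec C-\epsilon H$, this contradicts that lemma. Your decomposition $D = mC + D'$ plays exactly the role that the subtraction $D_3 = D_2 - kS$ plays \emph{inside} the paper's proof of Lemma \ref{lem:nefBs}\eqref{it:Bs}, but where the paper then argues with base loci and ampleness, you close each case of the trichotomy by pure intersection-number computations against $C$ and $H$; all three cases check out. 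What each approach buys: the paper's version is shorter and reuses machinery it has already set up, while yours is more elementary and arguably more robust, since intersection numbers depend only on numerical classes, so your computation applies verbatim when ``effective'' is interpreted numerically or with $\QQ$- or $\RR$-coefficients --- which is the setting in which the lemma actually gets applied, e.g.\ in Proposition \ref{prop:conesSRigid} where $\epsilon$ is a real number. The one cosmetic adjustment this generality requires is that the multiplicity $m$ of $C$ in an effective $\RR$-divisor need not be an integer, so your trichotomy should read $m<1$, $m=1$, $m>1$ rather than $m=0$, $m=1$, $m\geq 2$; the computations in each case are unchanged (and the paper's own proof of Lemma \ref{lem:nefBs}\eqref{it:Bs}, which subtracts an \emph{integer} multiple $kS$, has the same implicit restriction, so this is not a gap relative to the paper's standard of rigor). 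Your handling of the ``equivalently'' clause, via the identification of the interior of $\NEb(S)$ with the big cone, is also fine; the paper leaves that part implicit.
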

	
	\begin{proof}
		Note that $H \prec C \prec C-\epsilon H$ and $(C-\epsilon H)\cdot C <0$.
		Assuming that $C-\epsilon H$ is effective, $C$ is contained in the base locus of $C-\epsilon H$, contradicting Lemma \ref{lem:nefBs}\eqref{it:Bs}.
	\end{proof}
	
	\subsection{Super-rigid linkage}\label{subsec:SRigidLinkage}

	\begin{definition}
		Let $C,C' \subset \PP^3$ be smooth space curves and $S_1, S_2$ smooth surfaces.
		A linkage $C\link{S_1,S_2} C'$, with $\deg(S_i) = n_i$ and $n_1\leq n_2$, is called \emph{$C$-super rigid} if 
		\[
		2g' - 2 - (n_i-4)d' < 0,
		\]
		for $i =1,2$, where $(g',d')$ is the genus and degree of $C'$.
	\end{definition}
	
	The significance of the integers $2g' - 2 - (n_i-4)d'$ is explained in the following Lemma:
	
	\begin{lemma}\label{lem:intersectionsSRigid}
		Let $C \link{S_1,S_2} C'$ be a linkage, where $S_1, S_2$ are smooth surfaces of degrees $n_1,n_2$ in $\mathbb P^3$.
		Denote by $X \to \PP^3$ the blowup of $\PP^3$ along $C$. 
		
		Then,on $X$, we have
		\[
		S_i \cdot C' = (C')^2_{S_j} = 2g' - 2 - (n_j-4)d'.
		\]
	\end{lemma}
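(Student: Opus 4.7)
The strategy is to pull everything back to the blowup and apply the projection formula, reducing the computation of $S_i\cdot C'$ on $X$ to an intersection calculation on the surface $S_j$. Write $f\colon X\to\PP^3$ for the blowup, $E$ for its exceptional divisor, and denote by $\tilde V$ the strict transform of $V\subset\PP^3$. Since $S_i$ is smooth and contains the smooth curve $C$ as a Cartier divisor, we have $f^*S_i=\tilde S_i+E$. Applying the projection formula gives $f^*S_i\cdot\tilde C'=S_i\cdot C'=n_id'$, so the entire problem reduces to evaluating $E\cdot\tilde C'$.

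For this, I would use that $C'\subset S_j$ (with $\{i,j\}=\{1,2\}$, $i\neq j$) and therefore $\tilde C'\subset\tilde S_j$. The restriction $f|_{\tilde S_j}\colon\tilde S_j\to S_j$ is the blowup of $S_j$ along the Cartier divisor $C\subset S_j$, hence an isomorphism identifying $E|_{\tilde S_j}$ with $C$. Consequently $E\cdot\tilde C'=(C\cdot C')_{S_j}$, an intrinsic intersection number on $S_j$. Combining the linkage relation $C+C'\sim n_iH|_{S_j}$ with adjunction $K_{S_j}=(n_j-4)H|_{S_j}$ yields $(C')^2_{S_j}=2g'-2-(n_j-4)d'$ directly, while intersecting $C'$ with $n_iH|_{S_j}-C$ gives $(C\cdot C')_{S_j}=n_id'-(C')^2_{S_j}$. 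Assembling everything:
\[
\tilde S_i\cdot\tilde C'=n_id'-E\cdot\tilde C'=n_id'-\bigl(n_id'-(C')^2_{S_j}\bigr)=(C')^2_{S_j}=2g'-2-(n_j-4)d',
\]
which is precisely the desired equality.

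The only genuinely delicate point is the identification $E\cdot\tilde C'=(C\cdot C')_{S_j}$, which rests on $f|_{\tilde S_j}$ being an isomorphism; this is a consequence of the universal property of blowups applied to the Cartier divisor $C\subset S_j$. One should also verify that $\tilde C'\subset\tilde S_j$, which follows from $C'\setminus C\subset S_j\setminus C$ and taking closures. The remaining steps are routine intersection-theoretic bookkeeping, so I do not anticipate any substantial obstacle beyond careful tracking of the indices $i$ and $j$.
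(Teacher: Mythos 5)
Your proof is correct and follows essentially the same route as the paper's: both reduce the intersection on $X$ to the self-intersection $(C')^2_{S_j}$ computed on the surface $S_j$ (via the isomorphism of the strict transform $\tilde{S}_j$ with $S_j$ and the linkage relation $n_iH|_{S_j} \sim C + C'$), and then apply adjunction twice. The paper merely compresses your bookkeeping --- the decomposition $f^*S_i = \tilde{S}_i + E$, the projection formula, and the identification $E\cdot\tilde{C}' = (C\cdot C')_{S_j}$ --- into the single observation that the strict transform of $S_i$ restricts on $S_j$ to the class of $C'$.
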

	
	\begin{proof}
		We have
		\[
		S_i \cdot C' = S_i|_{S_j}\cdot C' = (C')^2_{S_j} = 2g' -2 - K_{S_j}\cdot C' = 2g' -2 - (n_j-4)d',
		\]
		where the last two equalities follow from two applications of the adjunction formula.
	\end{proof}

	\begin{proposition}\label{prop:conesSRigid}
		Let $C\link{S_1,S_2} C'$ be a $C$-super rigid linkage with $\deg(S_i) = n_i$ and $n_1 \leq n_2$, and let $X \to \PP^3$ be the blowup along $C$.
		Then
		\[
		\Eff(X) = \langle E, S_1 \rangle \quad \text{ and } \quad \Mov(X) = \langle H, S_2 \rangle.
		\]	
		Furthermore a divisor $D$ is nef if and only if $D \cdot C' \geq 0$;
		equivalently $C'$ generates an extremal ray of $\NE(X)$.
	\end{proposition}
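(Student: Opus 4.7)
My plan is to work in coordinates on $N^1(X)$ where $E$ and $H$ form a basis and the strict transforms satisfy $S_i \sim n_iH - E$; the assertions will then be established in order using the super-rigid negativity of $S_i\cdot C'$ as the central input.

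First I record intersection numbers: Lemma \ref{lem:intersectionsSRigid} yields $S_1\cdot C' = 2g'-2-(n_2-4)d'$ and $S_2\cdot C' = 2g'-2-(n_1-4)d'$, both strictly negative by super-rigidity, together with $H\cdot C' = d' > 0$ and $S_i\cdot f = 1$, where $f$ denotes a ruling of the exceptional divisor $E$. To see that $C'$ generates an extremal ray of $\NEb(X)$, let $\gamma_0$ span the unique extremal ray different from $[f]$ (which exists since $\rho(X)=2$) and write $C' = af + b\gamma_0$ with $a, b\geq 0$. The case $b = 0$ gives $C'\propto f$, contradicting $H\cdot f = 0 < H\cdot C'$. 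If $a > 0$, then the relations $S_i\cdot C' = a + bS_i\cdot\gamma_0 < 0$ force $S_i\cdot\gamma_0 < 0$ for both $i$, so $\gamma_0$ is contained in the one-dimensional part of $S_1\cap S_2$, which is $C'$; thus $\gamma_0 \propto C'$ and consequently $af\propto C'$, another contradiction. So $a=0$ and $C'$ generates the second extremal ray.

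The technical core is the cone descriptions, which rest on the claim that $S_1$ is the only irreducible surface of degree $<n_2$ in $\PP^3$ containing $C$. Suppose $\Sigma$ is another, of degree $n_0 < n_2$: if $\Sigma\not\supseteq C'$, then $\Sigma\cdot C' \geq 0$ combined with the super-rigid inequality $n_1-4 > (2g'-2)/d'$ yields $n_0 > n_2$, contradicting $n_0 < n_2$; if instead $\Sigma\supseteq C'$, then $\Sigma\cap S_1 \supseteq C\cup C'$ forces $n_0 n_1 \geq d+d' = n_1n_2$ by B\'ezout, again giving $n_0\geq n_2$. It follows that $\Eff(X) = \langle E, S_1\rangle$. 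For $\Mov(X)$: the linear system $|n_2H-E|$ contains both $S_2$ and pencils $S_1 + T$ (for varying surfaces $T$ of degree $n_2-n_1$), hence has no fixed divisor; conversely, any effective representative of a class $aE + bH$ with $a > 0$ must contain $E$ in its support, while any such class with $a\leq 0$ and $n_2a + b < 0$ must decompose using prime surfaces of degree $<n_2$ containing $C$, each equal to $S_1$ by the uniqueness claim, forcing $S_1$ as a fixed component. This gives $\Mov(X) = \langle H, S_2\rangle$. Finally, every $D\in\Mov(X)$ satisfies $D\cdot f \geq 0$ automatically, so within $\Mov(X)$ the remaining nef condition is $D\cdot C' \geq 0$, and the extremality of $C'$ then characterizes the nef cone.

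The hard part will be the uniqueness of $S_1$ among surfaces of degree $<n_2$ containing $C$: super-rigidity enters essentially through two different numerical inequalities, used respectively in the cases $\Sigma\not\supseteq C'$ and $\Sigma\supseteq C'$. Once this uniqueness is secured, the cone descriptions follow by routine bookkeeping with prime-divisor decompositions on $X$.
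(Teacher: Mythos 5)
Your argument for the extremality of $C'$ has a genuine gap at its key step: you treat the generator $\gamma_0$ of the second extremal ray of $\NEb(X)$ as if it were an irreducible curve, concluding from $S_1\cdot\gamma_0<0$ and $S_2\cdot\gamma_0<0$ that ``$\gamma_0$ is contained in the one-dimensional part of $S_1\cap S_2$''. Extremal rays of the \emph{closed} cone of curves need not be spanned by classes of curves, nor even by effective $1$-cycles --- they can be irrational, which is precisely the phenomenon this paper exploits elsewhere (Theorem \ref{thm:nonMDSquartics}). Passing to a limit of effective $1$-cycles $\gamma_n\to\gamma_0$ does not rescue the step: negativity of $S_1\cdot\gamma_n$ and $S_2\cdot\gamma_n$ only produces components lying in $S_1$ and components lying in $S_2$, not a single component in both (and in the unbalanced case $n_1<n_2$ the surface $S_1$ genuinely can contain curves other than $C'$ that are negative against $S_1$). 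Since your nef characterization is then \emph{deduced from} this extremality, the gap propagates to the final assertion. The missing ingredient --- and the route the paper takes --- is a divisor-side verification: restricting to $S_2$ one computes $S_2|_{S_2}=(n_2-n_1)H|_{S_2}+C'$, whence \emph{every} irreducible curve $\gamma\neq C'$, whether inside $S_2$ or not, satisfies $S_2\cdot\gamma\geq 0$; this single computation gives that divisors in $\langle H,S_2\rangle$ can only fail to be nef on $C'$, and extremality of $C'$ then follows by duality rather than preceding it.

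There is a second, more technical gap in both cone computations. Prime divisors on $X$ other than $E$ are strict transforms of irreducible surfaces $\Sigma\subset\PP^3$ of degree $a$ containing $C$ with multiplicity $m\geq 0$, with class $aH-mE$, and your uniqueness claim only controls the case $m=1$: for $\Eff(X)=\langle E,S_1\rangle$ you need $a\geq mn_1$ for every prime divisor, and in your $\Mov(X)$ argument the numerical condition only yields a component with $a_i<m_in_2$, from which $a_i<n_2$ does not follow when $m_i\geq 2$. Nothing in your claim excludes, say, an irreducible surface of degree $2n_1-1$ that is double along $C$ in the balanced case $n_1=n_2$, and such a divisor would lie outside $\langle E,S_1\rangle$. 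This part is repairable: if $\tilde\Sigma=aH-mE$ with $a<mn_2$, then $C'\subset\Sigma$ by your first case, and restricting to $S_1$ gives the effective class $(a-mn_2)H|_{S_1}+mC'$ with $(C')^2_{S_1}<0$, contradicting Lemma \ref{lem:zeroCurvesOnBoundry} --- which is essentially how the paper argues, via sections of the complete intersection ideal $\II_{C\cup C'}$ and restriction to $S_1$. I note that your uniqueness claim itself, with its two-case proof (super-rigid numerics when $\Sigma\not\supseteq C'$, B\'ezout when $\Sigma\supseteq C'$), is correct and is a nice alternative to the paper's ideal-theoretic argument; but as written the inferences ``it follows that $\Eff(X)=\langle E,S_1\rangle$'' and ``forcing $S_1$ as a fixed component'' do not follow from it.
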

	
	\begin{proof}
		The inclusions $\langle E, S_1 \rangle \subseteq \Eff(X)$ and $\langle H, S_2 \rangle \subseteq \Mov(X)$ are clear;
		similarly if a divisor $D$ is nef, then $D\cdot C' \geq 0$.
		We now prove all the reverse inclusions.
		
		Let $D \equiv r(\delta S_2 - \epsilon H)$ be an effective divisor, where $\epsilon,\,\delta>0$.
		Then, by Lemma \ref{lem:intersectionsSRigid}, $D\cdot C' < 0$ and so $C'$ is contained in the support of $D$.
		Consequently $D$ corresponds to a section of $\II_{C\cup C'}$, which is a complete intersection ideal of the surfaces $S_1,S_2$.
		Thus $S_2\prec D \prec S_1$, and in particular $D \in \langle E,S_1\rangle$, which shows the first equality.
		
		As for the second equality assume that $D$ is movable;
		in particular, by Lemma \ref{lem:nefBs}\eqref{it:Bs}, $S_1$ is not contained in the base locus of $D$, thus $D|_{S_1}$ is effective.
		On the other hand 
		\[
		D|_{S_1} = r(\delta S_2-\epsilon H)|_{S_1} = r(C' - \epsilon H|_{S_1}).
		\]
		Since $(C')_{S_1}^2 = 2g' - 2 - (n_1-4)d' < 0$    and $H|_{S_1}$ is ample we obtain a contradiction by Lemma \ref{lem:zeroCurvesOnBoundry}.
		This shows that $\Mov(X) = \langle H, S_2 \rangle$.
		
		For the last part, first note that the nef cone, being the closure of the ample cone, will be contained in the (closure of the) movable cone $\langle H , S_2 \rangle$.
		Let $D \in \langle H , S_2 \rangle$ so that $D \equiv aH + bS_2$ for $a,b \geq 0$.
		However $S_2$ is nef away from $C'$, thus $D$ can only fail to be nef on $C'$.
	\end{proof}
	
	\begin{proposition}\label{prop:semiAmp=>Qcan}
		Let $C\link{S_1,S_2} C'$ be a $C$-super rigid linkage with $\deg(S_i) = n_i$ and $n_1 \leq n_2$, $X \to \PP^3$ be the blowup along $C$, and $D$ be a nef divisor on $X$ that is trivial against $C'$.
		
		Then $\OO_{C'}(kD) \isom \OO_{C'}$ for some $k\geq 1$ if and only if $C'$ is $\QQ$-canonical;
		in particular, if $D$ is semiample, then $C'$ is $\QQ$-canonical.
	\end{proposition}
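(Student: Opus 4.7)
The plan is to combine Proposition \ref{prop:conesSRigid} with adjunction on the surface $S_2$. First, by that proposition any nef divisor $D$ lies in $\langle H, S_2\rangle$, so I would write $D \equiv aH + bS_2$ with $a, b \geq 0$; since $H\cdot C' = d' > 0$ while $S_2\cdot C' < 0$ by super-rigidity and Lemma \ref{lem:intersectionsSRigid}, the hypothesis $D\cdot C' = 0$ together with $D\not\equiv 0$ forces $a, b > 0$ strictly.

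The crux is then to identify the class $S_2|_{C'}$ in $\Pic(C')$ in terms of $H|_{C'}$ and $K_{C'}$. Since $C\subset S_2$ is Cartier, the strict transform $\tilde S_2\to S_2$ in $X$ is an isomorphism, under which $E|_{\tilde S_2}$ corresponds to the divisor $C$ on $S_2$. Using adjunction for smooth surfaces in $\PP^3$ and the relation $C + C' \sim n_1 H|_{S_2}$ on $S_2$ (coming from $S_1\cap S_2 = C\cup C'$), I expect to deduce
\[
K_{C'} = (n_1+n_2-4)H|_{C'} - C|_{C'} \quad \text{and} \quad S_2|_{C'} = n_2 H|_{C'} - C|_{C'},
\]
and hence the pivotal identity $S_2|_{C'} = K_{C'} - (n_1-4)H|_{C'}$. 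Establishing this identity is the main technical step; the care required lies in tracking how the exceptional divisor $E$ restricts under the isomorphism $\tilde S_2\isom S_2$.

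With this formula in hand, $D|_{C'} = \bigl(a-b(n_1-4)\bigr)H|_{C'} + bK_{C'}$ with $b>0$. The vanishing $kD|_{C'}\sim 0$ in $\Pic(C')$ then amounts to a nontrivial integer relation $kb K_{C'}\sim k\bigl(b(n_1-4)-a\bigr)H|_{C'}$, which is precisely the $\QQ$-canonicity of $C'$. For the converse, given $\mu K_{C'}\sim \nu H|_{C'}$, substitution shows that $\mu D|_{C'}$ is an integer multiple of $H|_{C'}$ whose degree equals $\mu\deg D|_{C'}=0$, hence it vanishes in $\Pic(C')$. Finally, the last assertion is immediate: if $D$ is semiample, a base-point-free multiple $kD$ restricts to a globally generated degree-$0$ line bundle on $C'$, hence trivial, and the equivalence just proved forces $C'$ to be $\QQ$-canonical.
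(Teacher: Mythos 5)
Your proposal is correct and takes essentially the same approach as the paper: the decomposition $D = aH + bS_2$ from Proposition \ref{prop:conesSRigid}, followed by adjunction and the linkage relation to arrive at the identity $D|_{C'} = \bigl(a-b(n_1-4)\bigr)H|_{C'} + bK_{C'}$, from which $\QQ$-canonicity is read off in both directions. The only differences are cosmetic — the paper derives the identity by restricting through $S_1$ (where $S_2|_{S_1} = C'$) instead of tracking $E|_{C'} = C|_{C'}$ on $S_2$ — and you spell out two small points the paper leaves implicit, namely the strict positivity of $a,b$ and the degree argument in the converse.
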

	
	\begin{proof}
		By Proposition \ref{prop:conesSRigid} we have $D \sim aH + bS_2$ for some $a,b \geq 0$.
		Restricting $D$, first on $S_1$ and then on $C'$ we obtain
		\[
		D|_{S_1} \sim (aH + bS_2)|_{S_1} = aH|_{S_1} + bC' \implies D|_{C'} \sim aH|_{C'} + bC'|_{C'}.
		\]
		By adjunction on $C' \subset S_1$ we have that
		\[
		C'|_{C'} = K_{C'} - K_{S_1}|_{C'} = K_C - (n_1-4)H|_{C'}.
		\]
		Combining everything we get
		\begin{equation}\label{eq:restriction}\tag{$\perp$}
			D|_{C'} = (a-b(n_1-4))H|_{C'} + bK_C.
		\end{equation}
		Now if $\OO_{C'}(kD) \isom \OO_{C'}$ then, by \eqref{eq:restriction}, $C'$ is $\QQ$-canonical.
		Conversely if $C'$ is $\QQ$-canonical then, up to possibly scaling \eqref{eq:restriction} by some integer $k$, we obtain that $k(a-b(n_1-4))H|_{C'} \sim -kbK_C$, i.e.\ $\OO_{C'}(kD) \isom \OO_{C'}$.
	\end{proof}
	
	\begin{theorem}\label{thm:mainThmRigid}
		Let $(g',d')$ be integers such that either $d' \geq 2g'-2$ or $\frac{3g'}{2} \leq d' \leq 2g'-2$ and $(g',d') \neq (3,4), (4,6)$.
		Let $n_1, n_2$ be integers, sufficiently large so that
		\[
		h^1(\II_{C'}(n_i-4)) = 0 \quad \text{ and } \quad 2g'-2 -(n_i-4)d' <0
		\]
		for all $C' \in H_{g',d'}^S$, and define $d = n_1n_2 -d'$ and $g = \frac{1}{2}(n_1 + n_2 - 4)(d - d') +g'$.
		
		There exists a subset $U_{g,d} \subset H_{g,d}^S$ that is the complement of a countable union of closed subschemes so that, for any $C \in U_{g,d}$, the blowup $X \to \PP^3$ along $C$ admits a nef integral divisor, that is not semiample;
		in particular $X$ is not a Mori Dream Space.
	\end{theorem}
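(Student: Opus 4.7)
The plan is to produce, for each $C \in U_{g,d}$, a super-rigid linkage $C \link{S_1,S_2} C'$ with $C'$ not $\QQ$-canonical, and then combine Propositions \ref{prop:conesSRigid} and \ref{prop:semiAmp=>Qcan} to exhibit an explicit nef divisor on $X$ that cannot be semiample. The set $U_{g,d}$ itself will be constructed by transferring the non-$\QQ$-canonicity genericity from $H^S_{g',d'}$ (Proposition \ref{prop:QcanonicalNonGen}) across the linked-family correspondence of diagram \eqref{eq:linkedFam}.

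First, I would let $V \subset H_{g',d'}^S$ be the open subset where $h^1(\II_{C'}(n_i-4)) = 0$ for $i=1,2$ (automatic for the large $n_1, n_2$ we are given) and where a generic pair of surfaces of degrees $n_1, n_2$ through $C'$ is smooth with smooth residual curve; the latter is a standard Bertini-type open condition. Set $V_0 = V \setminus \mathcal{QK}_{g',d'}$, which by Proposition \ref{prop:QcanonicalNonGen} is the complement of a countable union of proper closed subschemes in every component of $V$. Applying Proposition \ref{prop:linkedOpen}, the linked family $V'$ is open in $H_{g,d}$; since $p_1, p_1'$ are proper and $\ell$ is an isomorphism, the closed pieces of $\mathcal{QK}_{g',d'}$ push to closed subschemes under the linked-family correspondence. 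I define $U_{g,d} \subset H^S_{g,d}$ to be $V'$ minus this countable union of closed subschemes, intersected with the smooth locus of the Hilbert scheme, so that $U_{g,d}$ has the required form.

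Given $C \in U_{g,d}$, the construction recovers a super-rigid linkage $C \link{S_1,S_2} C'$ with $C' \notin \mathcal{QK}_{g',d'}$. By Proposition \ref{prop:conesSRigid}, $\Nef(X) \subset \langle H, S_2 \rangle$, and $[C']$ spans an extremal ray of $\NE(X)$; in particular, a divisor in $\langle H, S_2 \rangle$ is nef if and only if it pairs non-negatively with $C'$. I would therefore exhibit the extremal nef divisor $D = aH + bS_2$ with $a = (n_1-4)d' - (2g'-2) > 0$ (positive by super-rigidity) and $b = d'$; a direct computation using Lemma \ref{lem:intersectionsSRigid} gives $D \cdot C' = 0$, so $D$ is nef and lies on the extremal ray of $\Nef(X)$ perpendicular to $C'$.

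Finally, if $X$ were an MDS, then by Definition \ref{def:MDS} the cone $\Nef(X)$ would be generated by finitely many semiample classes; the extremal ray containing $D$ must then contain a semiample representative, so $kD$ is base-point-free for some $k \geq 1$. Restricting $kD$ to $C'$ yields a globally generated line bundle of degree $k\,D\cdot C' = 0$, hence the trivial bundle, so $\OO_{C'}(kD) \isom \OO_{C'}$. Proposition \ref{prop:semiAmp=>Qcan} then forces $C'$ to be $\QQ$-canonical, contradicting $C \in U_{g,d}$; therefore $D$ is nef but not semiample and $X$ is not an MDS. The main technical obstacle is tracking the $\QQ$-canonical locus carefully through the Hilbert-flag correspondence so that its image in $H_{g,d}^S$ remains a countable union of honestly closed subschemes and does not inflate to a thicker set—this is precisely what the properness of $p_1, p_1'$ together with Proposition \ref{prop:linkedOpen} secure.
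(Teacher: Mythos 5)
Your endgame is correct and coincides with the paper's: by Proposition \ref{prop:conesSRigid} the strict transform of $C'$ spans an extremal ray of $\NE(X)$, your divisor $D=\big((n_1-4)d'-(2g'-2)\big)H+d'S_2$ is indeed nef with $D\cdot C'=0$ by Lemma \ref{lem:intersectionsSRigid} (the paper leaves this representative implicit and simply invokes the propositions), and if some multiple of $D$ were semiample then $\OO_{C'}(kD)\isom\OO_{C'}$ would force $C'$ to be $\QQ$-canonical by Proposition \ref{prop:semiAmp=>Qcan}, contradicting the choice of $C'$; Definition \ref{def:MDS}\eqref{it:MDS2} then rules out Mori dreamness.

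The gap is exactly at the step you single out as the technical crux: $p_1$ and $p_1'$ are \emph{not} proper, so images of closed sets under the linked-family correspondence need not be closed. The flag scheme $D_{g,d}(n_1,n_2)$ parametrizes pairs $(C\subset V)$ with $V$ a complete intersection of type $(n_1,n_2)$, and being such a complete intersection is an \emph{open}, not a closed, condition: in a flat family the two surfaces can acquire a common component, so a flat limit of complete intersections need not be one. Consequently the fibres of $p_1$ are only quasi-projective (strict open subsets of products of projective spaces, since for large $n_i$ there exist pairs of forms through $C$ with a common factor), $p_1$ is not proper, and the image of $p_1'^{-1}(\mathcal{QK}_n)$ under $p_1\circ\ell^{-1}$ is a priori only constructible. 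Indeed, the failure of ``limits of linked curves stay linked'' is precisely why the paper needs the extra cohomological hypotheses of Proposition \ref{prop:specialization} to follow specializations through a linkage. You can patch this by replacing each image with its Zariski closure; the conclusion for points of $U_{g,d}$ survives, but now $U_{g,d}$ excludes every curve admitting \emph{any} linkage into the closure of the $\QQ$-canonical locus, and you owe a separate argument (say, a dimension count on the fibres of $p_1$) that $U_{g,d}$ is non-empty --- your proposal does not supply one. The paper's construction sidesteps both issues simultaneously: it writes the non-$\QQ$-canonical locus as $\bigcap_n V'_n$ with each $V'_n$ open and dense (Proposition \ref{prop:QcanonicalNonGen}), applies the openness theorem, Proposition \ref{prop:linkedOpen} --- which is the correct substitute for the false properness --- to each $V'_n$ separately, and sets $U_{g,d}=\bigcap_n V_n$; this is by construction a complement of countably many closed sets and contains the linked family of $U_{g',d'}$, hence is non-empty.
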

	
	\begin{proof}
		Let $U_{g',d'} \subset H_{g',d'}^S$ be the locus of non $\QQ$-canonical curves.
		By Proposition \ref{prop:QcanonicalNonGen}, $U_{g',d'}$ can be written as the intersection of countably many open an dense subsets $V'_n$, $n\in \NN$.
		Define $V_n$ to be the $(n_1,n_2)$-linked family to $V'_n$ of \eqref{eq:linkedFam} and $U_{g,d}$ to be the intersection of the $V_n$.
		Then, by Proposition \ref{prop:linkedOpen}, every $V_n$ is open in $H_{g,d}^S$ and every element in $C \in U_{g,d}$ is obtained by a $C$-super rigid linkage from an element $C' \in U_{g',d'}$.
		By Proposition \ref{prop:semiAmp=>Qcan} there exists a nef divisor $D$ on $X$ that is not semiample.
		We deduce that $X$ is not a Mori Dream Space by Definition \ref{def:MDS}\eqref{it:MDS2}. 
	\end{proof}
	
	The motto of Theorem \ref{thm:mainThmRigid} is: \emph{being obtained by a general linkage is an obstruction to Mori dreamness.}

	\begin{example}
		While Theorem \ref{thm:mainThmRigid} is stated asymptotically, in practice it is very explicit.
		For example let $(g',d') = (2,5)$ and $C' \in H_{2,5}^S$ be a non-$\QQ$-canonical curve, which is a generality condition by Proposition \ref{prop:QcanonicalNonGen}.
		Any such $C'$ is $(2,3)$-linked to a line, therefore, by Remark \ref{rem:linkedToACM}\eqref{it:linkedToACM3}, $C'$ is an ACM curve.
		Then the conditions
		\[
		h^1(\II_{C'}(n_i - 4)) = 0 \quad \text{ and } \quad 2g'-2 -(n_i-4)d'<0
		\]
		are satisfied for any $n_1,n_2 \geq 5$.
		For instance, for $n_1=n_2=5$, $(g,d) = (47,20)$ and all conditions of Theorem \ref{thm:mainThmRigid} are satisfied.
		
		Certain examples with the starting curve $C'$ not being ACM can also be treated, where $h^1(\II_{C'}(n_i-4))$ can be computed on a case by case basis.
	\end{example}

	\subsection{Curves with extremal surfaces}\label{subsec:extremalSurfaces}

	\begin{proposition}\label{prop:conesExtremal}
		Let $C$ be a smooth space curve contained in a surface $S \in \mathbb P^3$.
		Denote by $X \to \PP^3$ the blowup along $C$.
		Suppose that the class $S|_S$ is not in the interior of $\NEb(S)$.
		
		Then $\Eff(X) = \langle E, S \rangle$ and we have
		\[
		\Movb(X) \isom \NEb(S) \cap H_\vdash \quad \text{ and } \quad \Nef(X) \isom \Nef(S) \cap H_\vdash,
		\]
		where $H_\vdash = \big\{mH|_S + nC \in \Pic(S) \,\big|\, m\geq 0 \big\}$ and the isomorphism is given by restriction.
	\end{proposition}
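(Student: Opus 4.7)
The plan is to establish the three cone descriptions in sequence, with the hypothesis that $S|_S \notin \inter \NEb(S)$ doing essential work throughout.

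For $\Eff(X)$, the inclusion $\langle E, S\rangle \subseteq \Eff(X)$ is immediate. For the converse, take an irreducible effective $D \neq E, S$. Intersecting with the strict transform of a general line disjoint from $C$ and with a general fiber of $E \to C$ yields $D = aH - cE$ with $a, c \geq 0$; the remaining task is to show $a \geq nc$, where $n = \deg S$. If not, let $k$ be maximal with $D_1 := D - kS$ effective. Since $D_1$ does not contain $S$, the restriction $D_1|_S$ is pseudoeffective on $S$, and a direct computation gives $D_1|_S = \alpha h - \beta C$ with $\alpha = a - nk \geq 0$, $\beta = c - k > 0$, and $\alpha/\beta < n$. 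Then the identity
\[
S|_S \;=\; \tfrac{1}{\beta}\,(D_1|_S) + \Bigl(n - \tfrac{\alpha}{\beta}\Bigr)\, h
\]
writes $S|_S$ as the sum of a pseudoeffective class and a positive multiple of the ample $h$, placing $S|_S$ in the interior of $\NEb(S)$---a contradiction.

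For $\Movb(X)$ and $\Nef(X)$, the restriction map $aH + bE \mapsto ah + bC$ gives the candidate isomorphism. The forward inclusions are direct: a movable $D$ does not have $S$ as a fixed divisorial component of $|kD|$, so $D|_S$ is pseudoeffective; and the effective cone step forces $D|_S \in H_\vdash$. The nef case is analogous, with nefness of $D|_S$ tested against curves on $S$. For the converses, I would lift a class in $\NEb(S) \cap H_\vdash$ (resp.\ $\Nef(S) \cap H_\vdash$) back via the restriction identification, and verify movability (resp.\ nefness) on $X$. For the nef reverse, any curve $\gamma \not\subset S$ with $D \cdot \gamma < 0$ is ruled out by Lemma \ref{lem:nefBs}\eqref{it:nef}: the restriction conditions place $D$ between $H$ and $S$, forcing $S \cdot \gamma < 0$ and hence $\gamma \subset S$, a contradiction.

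The effective cone step is the conceptual heart, as the extremality of $S|_S$ is precisely what excludes an additional ray of $\Eff(X)$ beyond $S$. The most technical point I expect is the converse of the movable cone statement---showing that a pseudoeffective class on $S$ lifts to a class on $X$ that is actually movable (not merely pseudoeffective), since $S$ could a priori appear as a fixed divisorial component of the lift.
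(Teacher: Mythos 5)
Your treatment of $\Eff(X)$ and of the nef cone follows the same route as the paper: restrict to $S$ and exploit the hypothesis $S|_S \notin \operatorname{int}\NEb(S)$ via the observation that a pseudoeffective class plus a positive multiple of an ample class is big. Your effective-cone argument (irreducible $D$, subtract the maximal multiple of $S$, restrict, contradict) is a correct and somewhat more explicit version of the paper's, which instead argues by contraposition on a hypothetical extremal ray $D \equiv S - \epsilon H$ and invokes Lemma \ref{lem:nefBs}\eqref{it:Bs}; and your nef converse is essentially the paper's argument. One caveat on that converse: the step ``the restriction conditions place $D$ between $H$ and $S$'' is not automatic from $D|_S \in \Nef(S)\cap H_\vdash$ as $H_\vdash$ is literally defined. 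For instance, in the quartic applications $C$ itself is nef on $S$ and lies in $H_\vdash$ (take $m=0$), yet its lift is $E$, which is neither nef nor in $\langle H, S\rangle$. The positioning must be extracted from the hypothesis by the same pseudoeffective-plus-ample trick as in your effective step, applied to the half-plane on the $S|_S$-side of $\RR H|_S$; the paper sidesteps this by simply writing every relevant divisor as $\alpha H + \beta S$ with $\alpha,\beta>0$ from the outset.

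The genuine gap is the converse inclusion for $\Movb(X)$, which you explicitly defer (``the most technical point I expect'') and never supply; this is the heart of the proposition, since it is what produces the irrational extremal ray of $\Movb(X)$ in Theorem \ref{thm:nonMDSquartics}. The paper's argument runs as follows: take $D \equiv \alpha H + \beta S$ with $\alpha, \beta > 0$ whose restriction $D|_S$ lies in the \emph{interior} of $\NEb(S)$. Any curve $\gamma$ in the stable base locus of $D$ satisfies $D\cdot\gamma \leq 0$; since $\gamma$ cannot be a fiber of $E \to C$, one has $H\cdot\gamma>0$, hence $S\cdot\gamma<0$ (the Lemma \ref{lem:nefBs}\eqref{it:nef} mechanism), hence $\gamma \subset S$. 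Because $D|_S$ is in the interior of $\NEb(S)$, i.e.\ big on $S$, only finitely many curves of $S$ can be non-positive against it. Therefore the stable base locus of $D$ contains at most finitely many curves --- in particular it contains no divisor, so $S$ is not a fixed component --- and $D$ is movable; taking closures over all such $D$ gives the cone equality. This is exactly the mechanism that resolves the worry you state at the end: base curves are trapped inside $S$, and interiority (bigness) of $D|_S$ then bounds them. Without this step, your proposal establishes the effective-cone and nef-cone statements but not the movable-cone statement.
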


	\begin{proof}
		Since $E$ and $S$ are effective divisors the cone they span is clearly a subcone of $\Eff(X)$.
		Suppose by contraposition that $\Eff(X) = \langle E, D \rangle$, with $D \equiv S - \epsilon H$.
		By Lemma \ref{lem:nefBs}\eqref{it:Bs} $S$ is not contained in the stable base locus of $D$ and so $D|_S$ is effective.
		However we have
		\[
		D|_S \sim S|_S - \epsilon H|_S \implies S|_S \sim D|_S + \epsilon H|_S.
		\]
		$S|_S$ lying on the boundary of $\NEb(S)$ and $D|_S, H|_S$ being effective imply that both $D|_S$, but more importantly, $H|_S$ also lie on the boundary of $\NEb(S)$, which is absurd.
		
		Once again, given a movable or nef divisor $D \equiv \alpha H + \beta S$, with $\alpha, \beta >0$, its restriction on $S$ has to be effective/nef and so we get the two inclusions
		\[
		\Movb(X)|_S \subseteq  \NEb(S) \cap H_\vdash
		\quad \text{ and } \quad
		\Nef(X)|_S \subseteq \Nef(S) \cap H_\vdash,
		\]
		respectively.
		So we are left with proving the reverse inclusions.
		
		Let $\gamma$ be a curve in the stable base locus of $D$.
		Then $D\cdot \gamma \leq 0$ and, since $\gamma$ cannot be proportional to a fiber of $X \to \PP^3$, $H\cdot \gamma > 0$ which implies that $S \cdot \gamma < 0$, and consequently $\gamma \subset S$.
		Since the class of $D|_S$ lies in the interior of $\NEb(S)$, it can only be trivial against a finite number of curves.
		Consequently there can only be finitely many curves in the stable base locus of $D$, i.e.\ $D$ is movable.
		Arguing similarly, if $D$ is negative against some curve $\gamma$ then $\gamma \subset S$ and thus $D|_S\cdot \gamma < 0$.
		Thus, if the restriction $D|_S$ of $D$ on $S$ is nef, then $D$ is nef itself.
	\end{proof}
	
	Given a curve $C$ and a surface $S$ as in Proposition \ref{prop:conesExtremal}, we call $S$ the \emph{extremal surface of $C$.}

	\begin{figure}[h!]
		\begin{tikzpicture}
			\node[inner sep=0cm] (Cone) at (0,0)
			{\includegraphics[width=.3\textwidth]{./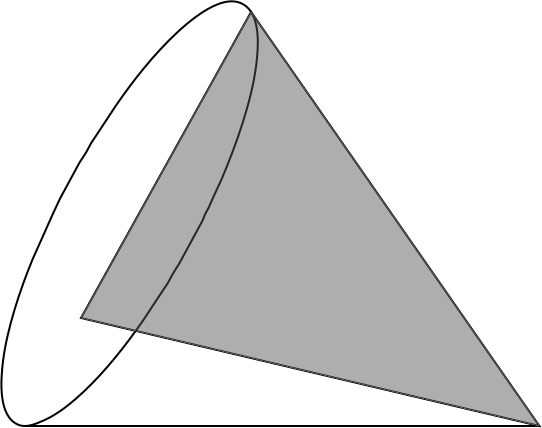}};

			\node[inner sep=1pt] (X) at (.4,-.45)
			{$\Nef(X)$};
			
			\node[inner sep=1pt] (S) at (1,1.4)
			{$\Nef(S)$};
			
			\node[inner sep=1pt, label={[label distance=-8pt]175:$C$}] (Cbul) at (-1.6,-.87)
			{{\small $\bullet$}};
			
			\node[inner sep=1pt, label={[label distance=-8pt]175:$H$}] (Hbul) at (-.58,.95)
			{{\small $\bullet$}};
		\end{tikzpicture}	
		\caption{The nef cone of $X$ as a slice of that of $S$.}
		\label{fig:NefQuartic}
	\end{figure}
	
	\begin{lemma}\label{lem:notInterior}
		Let $C$ be a smooth space curve of genus and degree $(g,d)$ contained in a smooth surface $S$ of degree $n$ and denote by $H$ the hyperplane class
		
		If either $d \geq n^2$ or $S$ does not contain any curves of non-positive self intersection and $n^3 - (3n-4) d + 2g - 2 \leq 0$ then $S|_S \sim nH-C$ is not in the interior of $\NEb(S)$; 
		that is $S$ is the extremal of $C$.
	\end{lemma}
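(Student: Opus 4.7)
The plan is to reduce the statement to two standard facts about pseudo-effective divisors on surfaces. Let $D \defeq nH|_S - C$, viewed as a class in $\Pic(S)$. The first step is a routine intersection-theoretic calculation: using $H|_S^2 = n$, $H|_S \cdot C = d$, and the adjunction formula $C^2 = 2g-2 - K_S\cdot C$ together with $K_S = (n-4)H|_S$, I would compute
\[
D\cdot H|_S = n^2 - d \quad \text{ and } \quad D^2 = n^3 - (3n-4)d + 2g-2.
\]
Thus the two hypotheses in the statement translate respectively to $D\cdot H|_S \leq 0$ and $D^2\leq 0$.

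I would then argue, in both cases, by recalling that the interior of the pseudo-effective cone in $N^1(S)_\RR$ coincides with the big cone, under the natural identification of $\NEb(S) \subset N_1(S)_\RR$ with the pseudo-effective cone via the intersection pairing. So it suffices to show that $D$ is not big.

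For the first case $d \geq n^2$: pseudo-effective divisors pair non-negatively with the ample class $H|_S$. If $D$ were in the interior of $\NEb(S)$, then $D - \epsilon H|_S$ would still be pseudo-effective for sufficiently small $\epsilon >0$, yet $(D-\epsilon H|_S)\cdot H|_S \leq -\epsilon n < 0$, a contradiction. For the second case, I would first observe that the assumption that $S$ contains no curve of non-positive self-intersection forces every pseudo-effective divisor on $S$ to be nef: indeed, the negative part $N$ of the Zariski decomposition is supported on curves whose intersection matrix is negative definite, and in particular each such curve has strictly negative self-intersection, so the hypothesis forces $N=0$. Granted this, any potential pseudo-effective $D$ is nef, and then the standard characterization of big nef divisors on surfaces ($D$ nef and big iff $D^2>0$) together with $D^2 \leq 0$ rules out bigness, finishing the proof.

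The only mildly subtle point is the passage \emph{pseudo-effective $\Rightarrow$ nef} under the curve-positivity assumption, which I expect to be the main conceptual step, though it is essentially immediate from Zariski decomposition; the rest is formal.
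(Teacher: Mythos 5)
Your proposal is correct, and its first half coincides with the paper's proof: the same adjunction computation giving $D\cdot H|_S = n^2-d$ and $D^2 = n^3-(3n-4)d+2g-2$, and the same treatment of the case $d\geq n^2$ by pairing against the ample class (the paper just says ``clearly'' where you spell out the $\epsilon$-argument). Where you genuinely diverge is the second case. The paper argues via the shape of the cone itself: since $S$ contains no curve of non-positive self-intersection, every irreducible curve class has positive square, so $\NEb(S)$ coincides with the closure of the positive cone $P(S) = \big\{ z \,\big|\, z^2>0,\ H|_S\cdot z >0 \big\}$, whose interior is $P(S)$; as $D^2 \leq 0$, the class $nH-C$ is excluded at once. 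You instead stay entirely on the divisor side: Zariski decomposition plus the absence of curves of negative self-intersection gives pseudo-effective $\Rightarrow$ nef, and then the numerical criterion for bigness of nef divisors ($D$ nef is big if and only if $D^2>0$) rules out bigness, hence interiority. Both arguments are complete. The paper's version is more economical and has the side benefit of identifying $\NEb(S)$ explicitly, which is what the paper actually exploits elsewhere (the same positive-cone description underlies Corollary \ref{cor:Pell} and the irrationality of the extremal rays in Theorem \ref{thm:nonMDSquartics}); your version invokes heavier general machinery but needs nothing about the global shape of the cone beyond non-bigness of the single class $nH-C$, so it would survive, for instance, weakening the hypothesis to ``no curves of negative self-intersection'' in the step where you deduce nefness (you still need the bigness criterion afterwards, which is where $D^2\leq 0$ enters).
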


	\begin{proof}
		By adjunction formula we have $K_S = (n-4)H|_S$ and $K_C = (K_S +C)|_C$, which combined give 
		\[
		C^2 = \deg(K_C) - K_S\cdot C  = 2g-2 - (n-4)d.
		\]	
		We then may calculate that
		\[
		H \cdot (nH - C) = n^2 - d \quad \text{ and } \quad (n H - C)^2 = n^3 - (3n-4) d + 2g - 2.
		\]
		
		Since $H$ is an ample divisor, if $n^2-d \leq 0$ then clearly $nH - C$ cannot lie in the interior of $\NEb(S)$.
		Suppose now that $S$ does not contain any curves of non-positive self intersection and $n^3 - (3n-4) d + 2g - 2 \leq 0$.
		Then the interior of $\NEb(S)$ coincides with the closure of the positive cone
		\[
		P(S) \defeq \big\{ z \in \Pic(S) \,|\, z^2>0 \text{ and } H|_S\cdot z >0 \big\};
		\]
		since $(nH - C)^2 \leq 0$, it cannot lie in the interior of $\NEb(S)$.
	\end{proof}

	While the condition of not containing any curves of non-positive self intersection is hard to verify in general, in light of Corollary \ref{cor:Pell}, it is viable to do so for quartic surfaces of Picard rank $2$.
	Therefore for the rest of the section we stick to curves contained in such quartics. 
	
	In $H_{g,d}^S$ we define the locus 
	\[
	Q_{g,d}^{\circ} \defeq \Big\{ [C] \in H_{g,d}^S \, \Big| \, C \text{ is contained in a smooth quartic surface} \Big\}
	\]
	and denote by $Q_{g,d}$ its closure.
	By \cite{MoriK3} $Q_{g,d}$ is non-empty if an only if $8g<d^2$.
	
	\begin{theorem}\label{thm:nonMDSquartics}
		Let $(g,d)$ be integers satisfying $8g<d^2$ and let $C$ be a general element in $Q_{g,d}$. 
		Define $r = d^2-8(g-1)$ and suppose that the equations $P_{r,2}$ and $P_{r,0}$ of Corollary \ref{cor:Pell} do not admit any integer solutions, and that either $d \geq 16$ or $64 - 8d + 2g -2 \leq 0$.
		Denote by $X \to \PP^3$ the blowup along $C$.
		
		Then $\overline{\Mov}(X)$ has an irrationally generated extremal ray;
		in particular, $X$ is not a Mori Dream Space. 
	\end{theorem}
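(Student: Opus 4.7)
The plan is to identify the blowup $X$ as one whose movable cone is controlled by the Mori--Néron--Severi geometry of a K3 surface via Proposition~\ref{prop:conesExtremal}, then import the irrationality from Corollary~\ref{cor:Pell}.

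First I would establish that, for a general $C \in Q_{g,d}$, there is a smooth quartic $S \supset C$ with $\Pic(S) = \ZZ H|_S \oplus \ZZ C$. The existence of some smooth quartic containing $C$ follows from the definition of $Q_{g,d}$, using that $8g<d^2$ guarantees non-emptiness by \cite{MoriK3}. The statement about the Picard lattice is a standard Noether--Lefschetz argument: the locus in $\{(S,C) : C \subset S\}$ where $\Pic(S)$ strictly contains $\langle H|_S, C\rangle$ decomposes into countably many proper closed subvarieties, so a very general pair avoids them. With this in hand, $C^2_S = 2g-2$ by adjunction on the K3 surface $S$, hence by Definition~\ref{def:disc},
\[
\disc(S) = (H\cdot C)^2 - H^2 \cdot C^2 = d^2 - 8(g-1) = r.
\]

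Next I would feed this discriminant into Corollary~\ref{cor:Pell}: since neither $P_{r,2}$ nor $P_{r,0}$ admits integer solutions, $S$ contains no rational or elliptic curves, i.e.\ no curves of non-positive self-intersection, and both extremal rays of $\NEb(S)$ are irrationally generated. Now I would show that the class $S|_S \in \Pic(S)$ is not in the interior of $\NEb(S)$. Since on $X$ the strict transform of $S$ satisfies $S = 4H - E$ and $E|_S \sim C$ on $S$, we have $S|_S = 4H|_S - C$, which is exactly the class $nH - C$ of Lemma~\ref{lem:notInterior} with $n=4$. The two assumed alternatives $d \geq 16$ and $64 - 8d + 2g-2 \leq 0$ correspond precisely to the two hypotheses of that lemma (using the absence of non-positive self-intersection curves in the second case), so $S|_S$ lies on the boundary of, or outside, $\NEb(S)$.

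Having verified the hypothesis, Proposition~\ref{prop:conesExtremal} then yields
\[
\overline{\Mov}(X) \;\cong\; \NEb(S) \cap H_{\vdash},
\]
via restriction to $S$. The final step is to locate the irrational extremal ray inside this intersection. The cone $\NEb(S)$ has two extremal rays $R_1, R_2$; since the ample class $H|_S$ lies in its interior, one of them, say $R_1$, lies on the same side of the line $\RR\cdot C$ as $H|_S$, hence inside $H_{\vdash}$. This ray $R_1$ is therefore an extremal ray of $\NEb(S) \cap H_{\vdash}$, distinct from the rational ray $\RR_+\cdot C$ bounding $H_{\vdash}$. By Corollary~\ref{cor:Pell} it is irrationally generated, so $\overline{\Mov}(X)$ has an irrational extremal ray; since Mori Dream Spaces have rational polyhedral movable cone, $X$ is not an MDS.

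The main obstacle is the Noether--Lefschetz step in the first paragraph: carefully choosing in which moduli space generality is taken so that a general $C \in Q_{g,d}$ indeed lies on some smooth quartic with Picard lattice exactly $\langle H, C\rangle$, and not just on the locus where $(H, C)$ is primitive but the rank of $\Pic(S)$ jumps. Everything afterwards is a linear-algebra computation combined with direct applications of the previously established lemmata.
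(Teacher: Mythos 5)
Your proposal is correct and follows essentially the same route as the paper's own proof: general $C\in Q_{g,d}$ lies on a smooth quartic with $\Pic(S)=\langle H,C\rangle$ of discriminant $r$ (the paper gets this directly from \cite{MoriK3}, which already packages your Noether--Lefschetz step together with existence), then Corollary~\ref{cor:Pell} rules out curves of non-positive self-intersection and makes the rays of $\NEb(S)$ irrational, Lemma~\ref{lem:notInterior} with $n=4$ places $S|_S=4H|_S-C$ outside the interior, and Proposition~\ref{prop:conesExtremal} realizes $\Movb(X)$ as the half-plane slice of $\NEb(S)$. Your final paragraph locating which irrational ray survives the slice is a detail the paper leaves implicit, but it is the same argument, not a different one.
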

	
	\begin{proof}
		By \cite{MoriK3}, since $8g<d^2$, $Q_{g,d}$ is non-empty and for a general $C \in Q_{g,d}$ we may choose a smooth quartic surface $S$ containing $C$ with $\Pic(S) = \langle H, C \rangle$, whose discriminant is $r$.
		By assumption $P_{r,2}$ and $P_{r,0}$ do not admit any integer solutions and so, by Corollary \ref{cor:Pell}, $S$ does not contain any curves of non-positive self intersection and $\NEb(S)$ is spanned by two irrational rays. 
		
		Recall that the class of $S$ in $N^1(X)$ is $4H-E$ whose restriction on itself is equivalent to $4H - C$.
		Then, by Lemma \ref{lem:notInterior}, $S|_S$ is not in the interior of $\NEb(S)$ and, by Proposition \ref{prop:conesExtremal}, $\Movb(X)$ is a slice of $\NEb(S)$ by a half-plane.
		Therefore one of the rays of $\Movb(X)$ is irrationally generated.
	\end{proof}

	We now verify that there exists infinitely many pairs $(g,d)$ satisfying the assumptions of Theorem \ref{thm:nonMDSquartics}, so that the loci $Q_{g,d}$ are actually irreducible components of $H_{g,d}^S$.

	\begin{proposition}\label{prop:componentsQuartics}
		Let $(g,d)$ be any pair of integers with $8g < d^2$ and define $r = d^2 - 8(g-1)$.
		Suppose further that $d > 16$, $64 - 8d + 2g -2  \geq 0$ and $P_{r,2}$ does not admit any integer solutions.
		
		Then $Q_{g,d}$ is an irreducible component of $H_{g,d}^S$ of dimension $33 +g$.
	\end{proposition}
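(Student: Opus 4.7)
The plan is to compute $\dim Q_{g,d}$ via a flag Hilbert scheme argument, and then to bound $h^0(N_{C/\PP^3})$ at a general $[C] \in Q_{g,d}$ so as to force $Q_{g,d}$ to be a full irreducible component.

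Let $F \subset H_{g,d}^S \times |4H|$ parametrize pairs $(C, S)$ with $S$ a smooth quartic and $C \subset S$, with projections $\pi_1 \colon F \to H_{g,d}^S$ and $\pi_2 \colon F \to |4H|$. By \cite{MoriK3} (using $8g < d^2$), the image of $\pi_2$ on the component of $F$ of interest is the Noether--Lefschetz locus $NL_{g,d}$ of quartics admitting a Picard class of discriminant $r$; classical Noether--Lefschetz theory for K3 surfaces identifies $NL_{g,d}$ as an irreducible divisor in $|4H| \cong \PP^{34}$ of dimension $33$, smooth at its general point $S$, where $\Pic(S) = \langle H, C\rangle$ and (by the hypothesis on $P_{r,2}$) $S$ contains no $(-2)$-curves. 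Riemann--Roch on the K3 gives $h^0(\OO_S(C)) = g + 1$, so $\pi_2^{-1}(S) \cong \PP^g$ over a general $S$, whence $\dim F = 33 + g$. For the generic fiber of $\pi_1$, the short exact sequence $0 \to \OO_{\PP^3} \to \II_C(4) \to \OO_S(4H - C) \to 0$ gives $h^0(\II_C(4)) = 1 + h^0(\OO_S(4H - C))$, and since $d > 16$ forces $(4H - C) \cdot H < 0$, $4H - C$ is not effective and $h^0(\II_C(4)) = 1$. Hence $\pi_1$ is generically injective, $Q_{g,d}$ inherits irreducibility from $F$, and $\dim Q_{g,d} = 33 + g$.

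To promote $Q_{g,d}$ to a full component, I bound $h^0(N_{C/\PP^3})$ at a general $[C]$. The incidence sequence $0 \to N_{C/S} \to N_{C/\PP^3} \to N_{S/\PP^3}|_C \to 0$ has $N_{C/S} \cong K_C$ (by K3 adjunction) and $N_{S/\PP^3}|_C = \OO_C(4H)$. The companion sequence $0 \to \OO_S(4H - C) \to \OO_S(4H) \to \OO_C(4H) \to 0$, together with $h^0(\OO_S(4H - C)) = 0$ and $h^1(\OO_S(4H - C)) = 0$ (the latter by Kawamata--Viehweg applied to the big and nef class $C - 4H$, using $64 - 8d + 2g - 2 \geq 0$ and the no-$(-2)$-curves hypothesis), shows that the restriction $\phi \colon H^0(N_{S/\PP^3}) \to H^0(N_{S/\PP^3}|_C)$ is an isomorphism of $34$-dimensional spaces. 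The tangent space $T_{(C, S)} F$ is the kernel of the incidence map $(v, w) \mapsto \psi(v) - \phi(w)$, where $\psi$ is the map from the incidence sequence; since $\phi$ is an isomorphism, this kernel projects isomorphically onto $H^0(N_{C/\PP^3})$, giving $\dim T_{(C, S)} F = h^0(N_{C/\PP^3})$. Since $F$ is a $\PP^g$-bundle over $NL_{g,d}$, it is smooth of dimension $33 + g$ at $(C, S)$ whenever $NL_{g,d}$ is smooth at $S$; thus $h^0(N_{C/\PP^3}) = 33 + g$, bounding the local dimension of $H_{g,d}^S$ at $[C]$ from above by $33 + g = \dim Q_{g,d}$ and forcing $Q_{g,d}$ to be an irreducible component.

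The main technical obstacle is the smoothness of the Noether--Lefschetz locus $NL_{g,d}$ at its general point, which requires Hodge-theoretic input (essentially infinitesimal Torelli for K3 surfaces) beyond the excerpt. A more self-contained alternative would be to show directly that the connecting homomorphism $\partial \colon H^0(\OO_C(4H)) \to H^1(K_C) \cong \CC$ from the incidence sequence is surjective for a general $C$---equivalently, that the extension $0 \to K_C \to N_{C/\PP^3} \to \OO_C(4H) \to 0$ does not split---by exhibiting a non-vanishing extension class in $H^0(\OO_C(4H))^*$.
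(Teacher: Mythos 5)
Your proof follows, in essence, the same skeleton as the paper's: your incidence variety $F$ is the Hilbert-flag scheme $D_{g,d}(4)$ of Subsection \ref{subsec:linkageAndHilberFlag}, your dimension count $33+g$ is the paper's count ($18$ for the moduli of lattice-polarized K3 surfaces, plus $15$ for the choice of basis of $H^0(S,H)$ up to scaling, plus $g$ for the linear system), and your tangent-space argument built on the isomorphism $H^0(\OO_S(4H)) \to H^0(\OO_C(4H))$ is an unpacking of the citation \cite[Theorem 1]{Kleppe} that the paper uses to convert the vanishing $h^1(\II_C(4))=0$ (which equals $h^1(\OO_S(4H-C))$ by the sequence $0 \to \OO_{\PP^3} \to \II_C(4) \to \OO_S(4H-C) \to 0$) into component-hood. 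But the proof as written has genuine gaps. The first is your vanishing argument: Kawamata--Viehweg requires $C-4H$ to be big \emph{and} nef, while the hypotheses explicitly allow the boundary case $64-8d+2g-2=0$, where $(C-4H)^2=0$ and the class is not big. The paper's argument covers this case differently: since $S$ contains no rational curves (Corollary \ref{cor:Pell}), no effective class on $S$ has a fixed part, and $C-4H$ is \emph{primitive} (its coefficient on $C$ is $1$), so in the isotropic case it is the class of an elliptic pencil, for which $h^1$ vanishes. Note that primitivity is essential here---for a non-primitive isotropic class $kE$ with no fixed part one has $h^1(kE)=k-1\neq 0$---so your bigness hypothesis cannot simply be dropped without this extra observation.

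Second, the input you yourself flag as missing---smoothness (and irreducibility) of $NL_{g,d}$ at its general point---is not peripheral: it is the only thing giving you $\dim T_{(C,S)}F = 33+g$, hence the bound $h^0(N_{C/\PP^3}) \leq 33+g$, hence component-hood, so without it the proof does not close; the paper outsources exactly this deformation-theoretic content to \cite[Theorem 1]{Kleppe}. Third, the claim $\pi_2^{-1}(S)\cong \PP^g$ fails whenever $d$ is even: writing $C'=xH+yC$ with $H\cdot C'=d$ and $(C')^2=2g-2$ on a general $S$ with $\Pic(S)=\ZZ H\oplus \ZZ C$, one finds $(1-y^2)\left(\tfrac{d^2}{4}-(2g-2)\right)=0$, and $d^2>8g$ forces $y=\pm 1$; for even $d$ the second solution $(d/2)H-C$ is an effective class with the same degree and genus, so the fiber is two disjoint copies of $\PP^g$. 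Consequently the step ``$Q_{g,d}$ inherits irreducibility from $F$'' needs repair: either a monodromy argument interchanging the two classes, or---as the paper's dimension count implicitly does---working over the irreducible moduli of lattice-polarized K3 surfaces, where both linear systems occur over different points of a single irreducible family.
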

	
	\begin{proof}
		Since $8g < d^2$ by \cite[Theorem 1]{MoriK3} $Q_{g,d}$ is non-empty;
		furthermore, for $C \in Q_{g,d}^{\circ}$ and a general quartic $S$ containing $C$, $\Pic(S)$ is spanned by the classes $H|_S$ and $C$.
		Then the discriminant of $S$ is $r$ and, by Corollary \ref{cor:Pell} and $P_{r,2}$ not admitting integer solutions, $S$ does not contain any rational curves.
		By \cite[Theorem 1]{Kleppe}, to show that $Q_{g,d}$ is an irreducible component of $H_{g,d}^S$, it suffices to show that $h^1(\II_C(4)) = 0$.
		However, since $d > 16$, there exists a unique quartic containing it and the exact sequence
		\[
		0 \to H^0(\II_C(4)) \to H^0(\OO_{\PP^3}(4)) \to H^0(\OO_C(4)) \to H^1(\II_C(4)) \to 0
		\]
		yields
		\[
		h^1(\II_C(4)) = h^0(\II_C(4)) - h^0(\OO_{\PP^3}(4)) + h^0(\OO_C(4)) =  h^0(\OO_C(4)) -34,
		\]
		reducing to showing that $h^0(\OO_C(4)) = 34$.
		
		Let $S$ be the unique quartic containing $C$.
		We then have
		\begin{equation}\label{eq:sesK3}
			0 \to H^0(S,\OO_S(4H-C)) \to H^0(S,\OO_S(4)) \to H^0(C, \OO_C(4)) \to H^1(S,\OO_S(4H-C)), 
		\end{equation}
		the latter being isomorphic to $H^1(S,\OO_S(C-4H))$ by Serre duality.
		We may calculate that 
		\[
		(C-4H)^2 = 64 - 8d + 2g -2  \geq 0 \quad \text{ and } \quad H\cdot (C-4H) = d- 16 > 0;
		\]
		since $S$ contains no rational curves, $\OO_S(C-4H)$ has no fixed locus and consequently $h^1(S,\OO_S(C-4H)) = 0$.
		Since $H\cdot (4H-C) = 16-d < 0$, $h^0(S,\OO_S(4H-C)) = 0$ and, by Riemann-Roch $h^0(S,\OO_S(4)) = 34$.
		Finally \eqref{eq:sesK3} yields $h^0(C, \OO_C(4)) =34$ completing the proof of the first part.
		
		As for the dimension of $Q_{g,d}$ we use the facts that: 
		we have an embedding $L \hookrightarrow \Pic(S)$ where $L$ is the sublattice spanned by $H|_S$ and $C$;
		$S$ is the unique quartic containing $C$;
		on $S$, $C$ moves in a $g$-dimensional linear system.
		Combining everything we obtain
		\[
		\dim Q_{g,d} = (20 - 2) + 15 + g = 33+g,
		\]
		where the numbers from left to right are: the dimension of $L$-polarized K3 surfaces, the choice of a basis of $H^0(S,H|_S)$ up to scaling and finally the dimension of the linear system $\PP(H^0(S,C))$.	
	\end{proof}
	
	In \cite{KleppeOttem} the authors use similar techniques to construct non-reduced components of $H_{g,d}^S$ starting from curves on quartics or quintics.
	Their chosen surfaces though always contain a rational curve and therefore does not fit our requirements of Theorem \ref{thm:nonMDSquartics}.
	
	\begin{example}[Components of low degree]\label{ex:lowDegQuartic}
		The pairs $(g,d) = (3,9), (7,10), (15,12)$ and $(23,14)$ constitute all pairs with $d < 16$ satisfying the assumptions of Theorem \ref{thm:nonMDSquartics} and $Q_{g,d}$, the locus of curves contained in a smooth quartic surface, is a component of $H_{g,d}^S$.
		
		For $(g,d) = (3,9)$ or $(7,10)$, by \cite{Ein} since $g +3 \leq d$, $H_{g,d}^S$ is irreducible.
		Standard computations using the Riemann-Roch formula (cf.\ \cite[Lemma 2.3]{WeakFanos}) reveal that every $C \in H_{g,d}^S$ is contained in a quartic surface.
		Therefore $Q_{g,d} = H_{g,d}^S$ and for a general $C \in Q_{g,d}$, the blowup $X \to \PP^3$ along $C$ is not a Mori Dream Space.
		However there exist special elements in $Q_{g,d}$ contained in cubics:
		e.g.\ the loci $W_t$ of Example \ref{ex:curvesOnCubics} for $t = (4; 1,1, 1, 0, 0, 0)$ and $(6;2, 2, 2, 1, 1, 0)$ respectively.
		The blowup along the latter producing Mori Dream Spaces by Proposition \ref{prop:lowDegreeMDS}. 
		
		For $(g,d) = (23,14)$ and $C = C_{23,14}$ a general element in $H_{g,d}^S$ we have the sequence of linkages
		\[
		C_{23,14} \link{4,5} C_{3,6} \link{3,3} C_{0,3} \link{2,2} C_{0,1},
		\]
		the last curve being a line and hence ACM, so is $C$ by Remark \ref{rem:linkedToACM}.
		By Theorem \ref{thm:ACMareOpen}, $Q_{g,d}$ is a component of $H_{g,d}^S$.
		On the other hand, by Example \ref{ex:curvesOnCubics}, the locus $W_t$ for $t = (11;4, 4, 3, 3, 3, 2)$, is a codimension $1$ subset of $Q_{g,d}$.
	\end{example}

	\begin{example}[Components of large degree]\label{ex:largeDegQuartic}
		For any $n \geq 7$ the pairs $(g,d) = (20n +1 ,5n)$ satisfy the assumptions of both Proposition \ref{prop:componentsQuartics} and Theorem \ref{thm:nonMDSquartics}.
		Note that $g = 4d+1$ and so we have 
		\[
		d \geq 35 \quad \text{ and } \quad 64 - 8d + 2g -2 = 64.
		\]
		Thus we only need to verify that the Pell equations $P_{r,2}$ and $P_{r,0}$ do not admit integer solutions.	
		We have $r = d^2 - 8(g-1) = d(d-32)$.
		Since $d$ is a multiple of $5$, reducing $P_{r,2}$ modulo $5$ we obtain
		$x^2 = 2(\mod 5)$, which has no integer solutions as $2$ is not a square modulo $5$.
		A solution to $P_{r,0}$ is equivalent to $r=  d(d-32)$ being a square, which we readily see is not, by considering a factorization of $d$ into primes.
		
		Thus, for such pairs, $H_{g,d}^S$ has a component $Q_{g,d}$ so that the blowup $X$ of $\PP^3$ along a general $C \in Q_{g,d}$ is not a Mori Dream Space.
		For all small values of $n$ we were able to find curves on $H_{g,d}$ lying on smooth cubic surfaces (for example one may use the method \texttt{typeOnCubic} on Macaulay2 \cite{M2} from the second author's website \cite{code}); 
		such curves, by Proposition \ref{prop:lowDegreeMDS}, give rise to Mori Dream Spaces.
		However we were unable to find a closed formula for a type $(k;m_1,\dots,m_6)$ corresponding to $(g,d) = (20n +1 ,5n)$.
		
		Curiously, for $n = 7$, we have $(g,d) = (141,35)$.
		For $t = (22; 6, 6, 6, 6, 4, 3)$ the locus $W_t \subset H_{141,35}^S$ of Example \ref{ex:curvesOnCubics} is an irreducible component.
		Thus $H_{141,35}^S$ admits two components: a general element in one giving rise to Mori Dream Space, while in the other not. 
	\end{example}

	\begin{remark}
		The results of Subsections \ref{subsec:curvesInLowDegreeSurf} and \ref{subsec:extremalSurfaces} seem to suggest that properties of the surface $S$ of minimal degree containing a curve $C$ reflect properties of the blowup $X$ of $\PP^3$ along $C$.
		While true under further assumptions (e.g.\ $\deg(S)\leq 3$), it is not true in a vacuum.
		Indeed, choosing $S$ arbitrary and slicing it with any general hypersurface of degree strictly greater than $\deg(S)$ we obtain a complete intersection curve whose surface of minimal degree containing it is $S$.
		No matter the geometry of $S$, $X$ is a Mori Dream Space by Proposition \ref{prop:aciMDS}.
	\end{remark}

	In this section we have mostly worked with curves on quartic surfaces of Picard rank $2$.
	The reasoning is twofold: on the one hand we want to provide statements for open subsets of the various Hilbert schemes and, for a general quartic surface $S$ containing a given general curve, the Picard rank of $S$ is $2$, the minimum possible when the curve is not a hypersurface section of $S$;
	on the other hand, the statements of subsection \ref{subsec:quartics} give us an easy way of checking when the various cones of $S$, and consequently of $X$, are non rationally generated.
	Regarding the latter, even when $\rho(S) \geq 3$, the behaviour of the various 
	cones still reduces to lattice theoretic arguments.
	
	In fact, to our knowledge, the first example of a space curve $C_0$ whose blowup is not a Mori Dream Space was obtained in \cite{Kuronya} and it relied on the fact that $C_0$ is contained in quartic surface of Picard rank $3$ with open cone of curves.
	The genus and degree of $C_0$ are $(159,36)$.
	For a general curve with these numerics contained in a quartic surface $S$, then $\rho(S)=2$ and $r = \disc(S) = 32$.
	We may verify that the Pell equations $P_{r,2}$ and $P_{r,0}$ do not admit any solutions and so, by Proposition \ref{prop:componentsQuartics}, $Q_{159,36}$ is an irreducible component of $H_{159,36}^S$, that is, $C_0$ is simply a special element in $Q_{159,36}$.

	We should finally remark that the statements of subsection \ref{subsec:quartics} still hold true for smooth surfaces of higher degree.
	Start from any curve $C$ and take $S$ to be a smooth surface of minimal degree containing $C$ then, depending on a case by case study, one can use linkage and Lefschetz-type theorems (cf.\ \cite[Theorem II.3.1]{Lopez}) to control the Picard rank of $S$.
	Moreover, the notion of discriminant of a surface of Definition \ref{def:disc} and Lemma \ref{lem:discriminant} still hold true for arbitrary smooth surfaces of Picard rank $2$.
	The main obstacle is the lack of a uniform lower bound for the self intersection of curves on a surface $S \subset \PP^3$ of fixed degree;
	in turn this reflects to a lack of finiteness of the number of Pell equations to be checked.

	\section{Rigid and super-rigid skew linkage} 
	\label{sec:skewLinkage}
	
	In this last part we prove a a partial inverse to Proposition \ref{prop:semiAmp=>Qcan}.
	We also relax the notion super-rigid linkage of subsection \ref{subsec:SRigidLinkage}, by allowing the curve $C'$ to to be a union of pairwise skew curves, which also allows us to exhibit some interesting behaviours.
	
	\subsection{Definition and basic properties}
	
	\begin{definition}
		A linkage $C\link{S_1,S_2} \Gamma$ is called \emph{skew} if $\Gamma$ is a configuration  of smooth, pairwise skew curves $\gamma_i$, $i \in I = \{1,\dots,m\}$.
		
		A skew linkage $C\link{S_1,S_2} \Gamma$ is called \emph{$C$-super rigid} (resp.\ \emph{rigid}) if 
		\[
		e_i \defeq 2g_i - 2 - (n_1-4)d_i < 0 \, \text{(resp.\ $\leq 0$)},
		\]
		for all $i \in I$, where $(g_i,d_i)$ denotes the genus and degree of $\gamma_i$ and $n_1 \leq n_2$ the degrees of $S_1,S_2$ respectively.
		If moreover  $n_1 = n_2 = n$ we say that $C\link{S_1,S_2} \Gamma$ 
		is \emph{balanced}.
	\end{definition}
	
	Our choice of naming is explained in Remark \ref{rem:naming}.
	
	\begin{setup}\label{set:rigidLinkage}
		Up to reordering the curves $\gamma_i$ we will always assume that $\frac{e_i}{d_i}$ form an increasing sequence.
		Furthermore we partition $I$ into a subsets $I_1, \dots, I_k$ so that for $i \in I_a$, $j \in I_b$
		\[
		\frac{e_i}{d_i} < \frac{e_j}{d_j} \iff a < b.
		\]
	\end{setup}
	
	The significance of the numbers $e_i$ and the partition $I = I_1 \sqcup \ldots \sqcup I_k$ are explained in the following lemma.
	
	\begin{lemma}\label{lem:skewLinkageInters}
		Let $C \link{S_1,S_2 } \Gamma$ be a skew linkage and let $X \to \PP^3$ be the blowup along $C$.
		
		Then 
		\[
		S_1 \cdot \gamma_i \leq S_2 \cdot \gamma_i = (\gamma_i)^2_{S_1} = e_i.
		\]
		Furthermore, for $i\in I_a$ and $j \in I_b$, we have
		\[
		f \prec \gamma_j \preceq \gamma_i \iff a \leq b;
		\]
		in particular $\gamma_i, \gamma_j$ are proportional if and only if $a = b$.
	\end{lemma}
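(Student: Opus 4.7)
The plan is to split the statement into its two parts. For the first part, I would compute the three intersection quantities via adjunction applied on $S_1$ and $S_2$, using the linkage to identify divisor classes on each surface; for the second part, since $N_1(X)$ is two-dimensional, the cone relations reduce to an explicit computation in coordinates.

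For the intersection identities, adjunction on $S_1$ yields $K_{S_1} = (n_1-4)H|_{S_1}$, hence $(\gamma_i)_{S_1}^2 = 2g_i - 2 - (n_1-4)d_i = e_i$. To link this with $S_2 \cdot \gamma_i$, I would show that on $X$, restricting the strict transform $S_2 \equiv n_2 H - E$ to $S_1$ gives the divisor class $n_2 H|_{S_1} - C \sim \sum_k \gamma_k$ on $S_1$, where the last equivalence uses the defining linkage $S_1 \cap S_2 = C \cup \Gamma$ on $\PP^3$. Since the $\gamma_k$ are pairwise skew, $\gamma_k \cdot \gamma_i = 0$ on $S_1$ for $k \neq i$, and hence $S_2 \cdot \gamma_i = (\gamma_i)_{S_1}^2 = e_i$. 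Running the symmetric argument on $S_2$ yields $S_1 \cdot \gamma_i = 2g_i - 2 - (n_2-4)d_i$, and the assumption $n_1 \leq n_2$ forces $S_1 \cdot \gamma_i \leq e_i$, precisely as claimed.

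For the second part, I would fix a basis $(\ell, f)$ of $N_1(X)$, with $\ell$ the class of a line in $\PP^3$ disjoint from $C$ and $f$ a fiber of $E \to C$, and write $\gamma_i = d_i \ell + \beta_i f$. The coefficient $\beta_i$ is determined by $\beta_i = -E \cdot \gamma_i = S_2 \cdot \gamma_i - n_2 d_i = e_i - n_2 d_i$, using the first part. Expressing $\gamma_j = r_1 f + r_2 \gamma_i$ and matching $\ell$- and $f$-coefficients produces $r_2 = d_j/d_i$ (always positive) and, after simplification,
\[
r_1 \;=\; d_j\!\left(\tfrac{e_j}{d_j} - \tfrac{e_i}{d_i}\right).
\]
Setup~\ref{set:rigidLinkage} then immediately yields the equivalence $f \prec \gamma_j \preceq \gamma_i \iff a \leq b$: the case $r_1 > 0$ with $r_2 > 0$ (i.e.\ $\gamma_j$ strictly in the interior of the cone spanned by $f$ and $\gamma_i$) corresponds to $a < b$, whereas $r_1 = 0$ corresponds to $a = b$, in which case $\gamma_j$ and $\gamma_i$ are proportional. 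The proportionality criterion then follows at once.

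The only subtle step is the identification $S_2|_{S_1} \sim \sum_k \gamma_k$ on the blowup $X$, since one must remove the contribution of $C$ from the scheme-theoretic intersection $S_1 \cap S_2 = C \cup \Gamma$ on $\PP^3$; here the fact that the strict transforms of $S_1$ and $S_2$ each meet $E$ along the section $C$ itself provides the correct subtraction, and is the one place where the blowup geometry is really used. Everything else is adjunction and two-dimensional linear algebra.
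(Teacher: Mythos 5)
Your proof is correct and follows essentially the same route as the paper: the first part comes from restricting the strict transform $S_2 \equiv n_2H - E$ to $S_1$ and applying adjunction, with pairwise skewness killing the cross terms $\gamma_k\cdot\gamma_i$ (a point the paper leaves implicit in its reference to Lemma \ref{lem:intersectionsSRigid}), and the second part is the same coefficient comparison in the basis $(l,f)$ of $N_1(X)$. Your formula $r_1 = d_j\bigl(\tfrac{e_j}{d_j} - \tfrac{e_i}{d_i}\bigr)$ is exactly the paper's relation $d_i\gamma_j \equiv (d_ie_j-d_je_i)f + d_j\gamma_i$ rescaled by $d_i$.
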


	\begin{proof}
		For the proof of the first part is similar to that of Lemma \ref{lem:intersectionsSRigid}.
		As for the second part for any $k\in I$, since $S_2 \equiv n_2 H - E$, we may compute that
		\[
		E\cdot \gamma_k = (n_2H - S_2)\gamma_k = n_2d_k - e_k
		\]
		and so $\gamma_k \equiv d_k(l - n_2f) + e_kf$.
		We thus have
		\[
		d_i\gamma_j - d_j\gamma_i \equiv (d_ie_j - d_je_i)f \iff d_i\gamma_j \equiv (d_ie_j - d_je_i)f + d_j\gamma_i,
		\]
		where the coefficient $(d_ie_j - d_je_i)$ is non-negative if and only if $\frac{e_j}{d_j} \geq \frac{e_i}{d_i}$, i.e.\ if and only if $a\leq b$.
	\end{proof}

	Verbatim we also obtain the counterpart to Proposition \ref{prop:conesSRigid}.
	
	\begin{proposition}\label{prop:conesRigidLinkage}
		Let $C \link{S_1,S_2} \Gamma$ be a skew $C$-rigid linkage and let $X \to \PP^3$ be the blowup along $C$.
		Then
		\[
		\Eff(X) = \langle E, S_1 \rangle \quad \text{ and } \quad \Mov(X) = \langle H, S_2 \rangle.
		\]	
		Furthermore a divisor $D$ is nef if and only if $D \cdot \gamma_i \geq 0$ for all $1\leq i \leq m$.
	\end{proposition}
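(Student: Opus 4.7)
The plan is to mirror, essentially verbatim, the proof of Proposition \ref{prop:conesSRigid}, substituting Lemma \ref{lem:skewLinkageInters} for Lemma \ref{lem:intersectionsSRigid} and treating the full collection $\{\gamma_i\}_{i\in I}$ in place of the single curve $C'$. The inclusions $\langle E, S_1\rangle \subseteq \Eff(X)$ and $\langle H, S_2\rangle \subseteq \Mov(X)$ are immediate from the effectivity of the generators, as is the ``only if'' direction of the nef characterisation.

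For the reverse inclusion of the effective cone I would take an effective $D \equiv r(\delta S_2 - \epsilon H)$ with $r,\delta,\epsilon > 0$. Lemma \ref{lem:skewLinkageInters} gives
\[
D \cdot \gamma_i \;=\; r(\delta e_i - \epsilon d_i) \;\leq\; -r\epsilon d_i \;<\; 0
\]
for every $i$, using $e_i \leq 0$ and $d_i > 0$. Hence each $\gamma_i$ lies in $\Supp(D)$, so $\Gamma \subset \Supp(D)$ and $D$ arises from a global section of $\II_{C \cup \Gamma}$. The linkage identifies this ideal with the complete intersection ideal cut out by $S_1,S_2$, which forces $S_2 \prec D \prec S_1$; in particular $D \in \langle E, S_1\rangle$.

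The movable cone inclusion is where the skewness genuinely enters. Suppose $D \equiv r(\delta S_2 - \epsilon H)$ is movable; by Lemma \ref{lem:nefBs}\eqref{it:Bs} the surface $S_1$ is not in the base locus of $D$, so
\[
D|_{S_1} \;=\; r\bigl(\delta \Gamma - \epsilon H|_{S_1}\bigr)
\]
is effective on $S_1$. Since the $\gamma_i$ are pairwise skew in $\PP^3$ they are disjoint, and therefore $\gamma_i \cdot \gamma_j = 0$ on $S_1$ for $i \neq j$; combined with $(\gamma_i)^2_{S_1} = e_i \leq 0$, this yields $\Gamma^2 = \sum_i e_i \leq 0$ on $S_1$. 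Lemma \ref{lem:zeroCurvesOnBoundry} then contradicts the effectivity of $\delta\Gamma - \epsilon H|_{S_1}$, producing $\Mov(X) = \langle H, S_2\rangle$.

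Finally, for the nef characterisation, $\Nef(X) \subseteq \Mov(X) = \langle H, S_2\rangle$ is standard. Given $D = aH + bS_2$ with $a,b \geq 0$, any curve $\gamma \subset X$ on which $D$ is negative forces $S_2 \cdot \gamma < 0$ (as $H$ is nef), hence $\gamma \subset S_2$. The main point to be careful about, as in the original proof of Proposition \ref{prop:conesSRigid}, is to pin down that the only such negative curves of $S_2$ are the $\gamma_i$ themselves---the skew analogue of ``$S_2$ is nef away from $C'$''. I expect this to follow from the symmetric computation $(\gamma_i)^2_{S_2} = e_i - (n_2-n_1)d_i \leq 0$, so that the $\gamma_i$ exhaust the extremal negative curve classes of $S_2$ relevant to nefness on $X$, giving the desired equivalence $D$ nef $\iff D \cdot \gamma_i \geq 0$ for all $i$.
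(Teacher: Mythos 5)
Your proposal follows exactly the route the paper itself takes: the paper's own ``proof'' of Proposition \ref{prop:conesRigidLinkage} is literally the remark that the argument of Proposition \ref{prop:conesSRigid} goes through verbatim, with Lemma \ref{lem:skewLinkageInters} replacing Lemma \ref{lem:intersectionsSRigid}, and your treatment of $\Eff(X)$ and $\Mov(X)$ is that argument, correctly adapted. One remark on the movable-cone step: what makes Lemma \ref{lem:zeroCurvesOnBoundry} applicable to the disconnected curve $\Gamma$ is not the aggregate inequality $\Gamma^2 = \sum_i e_i \leq 0$ that you emphasize, but the componentwise facts you state alongside it. Since the $\gamma_i$ are disjoint and $(\gamma_i)^2_{S_1} = e_i \leq 0$, every component satisfies $(\delta\Gamma - \epsilon H|_{S_1})\cdot\gamma_i \leq -\epsilon d_i < 0$, so every component is forced into the base locus and the proof of Lemma \ref{lem:zeroCurvesOnBoundry} (via Lemma \ref{lem:nefBs}\eqref{it:Bs}) runs unchanged; the aggregate bound alone would not suffice for a disconnected curve, since a component of positive self-intersection could escape the base locus and derail the subtraction argument.

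The one genuine gap is in the nef characterisation. What needs proof there is that $S_2$ meets every irreducible curve of $X$ \emph{other than} the $\gamma_i$ non-negatively, and this does not follow from the computation $(\gamma_i)^2_{S_2} = e_i - (n_2-n_1)d_i \leq 0$: knowing the self-intersections of the $\gamma_i$ inside $S_2$ says nothing about how $S_2$ meets the other curves lying on $S_2$. The correct argument is the decomposition $S_2|_{S_2} \sim S_1|_{S_2} + (n_2-n_1)H|_{S_2} = \Gamma + (n_2-n_1)H|_{S_2}$, which uses $n_1\leq n_2$ and the fact that on $X$ the strict transforms of $S_1$ and $S_2$ meet exactly along $\Gamma$ (as $S_1\cap S_2 = C\cup\Gamma$ in $\PP^3$ and $C$ has been blown up): for an irreducible curve $\gamma\subset S_2$ that is not one of the $\gamma_i$, both summands meet $\gamma$ non-negatively, hence $S_2\cdot\gamma\geq 0$, while curves not contained in $S_2$ meet it non-negatively for trivial reasons. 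This is the precise content of the assertion ``$S_2$ is nef away from $C'$'' in the proof of Proposition \ref{prop:conesSRigid}, and once it is in place your final equivalence for $D \equiv aH+bS_2$, $a,b\geq 0$, is immediate.
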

	
	In view of Lemma \ref{lem:skewLinkageInters} we will denote by $R_a$ the ray in $N_1(X)$ spanned by any curve $\gamma_i$ with $i \in I_a$.
	We further denote by $D_a$ the unique numerical class, up to scaling, in $N^1(X)$ that is perpendicular to $R_a$.
	We then obtain Figures \ref{fig:NE(X)} and \ref{fig:Eff(X)}.
	
	Our goal is to show that, under some assumptions on the curves $\gamma_i$ and the linkage $C \link{S_1,S_2} \Gamma$, $X$ is a Mori Dream Space whose Mori chamber decomposition is given by the divisors $D_a$.
	
	\begin{figure}
		\centering
		\captionbox{$\NE(X)$ with the rays spanned by the $\gamma_i$, the shaded region being the $D_a$-negative half space.\label{fig:NE(X)}}[.47\textwidth]
		{
			\begin{tikzpicture}
				\node[inner sep=5pt] (0) at (0,0)
				{\resizebox{1.5mm}{!}{$\bullet$}};
				\node[inner sep=5pt] (f) at (3,0)
				{$f$};
				\node[inner sep=5pt] (l) at (2.12,2.12)
				{$l$};
				\node[inner sep=5pt] (k) at (0,3)
				{$R_k$};
				\node[inner sep=5pt] (dts) at (-.3, 1.65)
				{\rotatebox{8}{$\dots$}};
				\node[inner sep=5pt] (a) at (-1.12, 2.78309)
				{$R_a$};
				\node[inner sep=5pt] (dts2) at (-.95, 1.4)
				{\rotatebox{35}{$\dots$}};
				\node[inner sep=5pt] (2) at (-2.24, 1.9956)
				{$R_2$};
				\node[inner sep=5pt] (1) at (-2.8, 1.07703)
				{$R_1$};
				\node[inner sep=5pt] (neg) at (-2.5, 3.1)
				{\scalebox{.8}{$[D_a< 0]$}};
				
				\begin{scope}		
					\clip (-3.1,0) rectangle ++(3.1,3.3);
					\draw[draw opacity=0, fill=black,fill opacity=.15, rotate around={22:(0,0)}] (0,0)rectangle ++(-3,5);
				\end{scope}
				\draw[-,line width=.5mm] (0,0)--(f);
				\draw[-] (0,0)--(l);
				\draw[-] (0,0)--(k);
				\draw[-] (0,0)--(a);
				\draw[-] (0,0)--(2);
				\draw[-,line width=.5mm] (0,0)--(1);
			\end{tikzpicture}
		}\hspace{.05\textwidth}
		\captionbox{$\Eff(X)$ with the shaded and striped regions signifying the movable and nef cones respectively.\label{fig:Eff(X)}}[.47\textwidth]{
			\begin{tikzpicture}
				\node[inner sep=5pt] (0) at (0,0)
				{\resizebox{1.5mm}{!}{$\bullet$}};
				\node[inner sep=5pt] (E) at (-3,0)
				{$E$};
				\node[inner sep=5pt] (H) at (-2.12,2.12)
				{$H$};
				\node[inner sep=5pt] (1) at (0,3)
				{$D_1$};
				\node[inner sep=5pt] (2) at (.7, 2.94727)
				{$D_2$};
				\node[inner sep=5pt] (dts) at (.7, 1.65)
				{\rotatebox{160}{$\dots$}};
				\node[inner sep=5pt] (k) at (1.68, 2.48548)
				{$D_k$};
				\node[inner sep=5pt] (S2) at (2.24, 1.9956)
				{$S_2$};
				\node[inner sep=5pt] (S1) at (2.8, 1.07703)
				{$S_1$};
				
				\begin{scope}
					\clip (-3,0) rectangle (current bounding box.north east);
					\clip (0,0) let \p1 = ($(2.5,0) - (0,0)$) in circle ({veclen(\x1,\y1)});
					\draw[-,line width=.5mm] (0,0)--(E);
					\draw[-] (0,0)--(H);
					\draw[-] (0,0)--(1);
					\draw[-] (0,0)--(2);
					\draw[-] (0,0)--(k);
					\draw[-] (0,0)--(S2);
					\draw[-,line width=.5mm] (0,0)--(S1);
					\fill[fill=black, fill opacity=.15] (H.center)--(0,5)--(S2.center)--(0,0);
					\fill[pattern=north west lines, fill opacity=.15] (H.center)--(0,5)--(1.center)--(0,0);
				\end{scope}
			\end{tikzpicture}
		}
	\end{figure}

	\subsection{Two contraction statements}
	
	\begin{lemma}\label{lem:generalVanishing}
		Let $X$ be a projective variety and $S$ a reduced hypersurface whose connected components $S_1,\dots, S_k$ are all Cartier divisors.
		Let $A$ be any Cartier divisor on $X$ and $n\in \NN$ so that $h^i(X,A)=0$ and $h^i(S_j,A + rS) = 0$ for all $i>0$, $1\leq j\leq k$ and $1 \leq r \leq n$.
		
		Then $h^i(X,A+rS)= 0$ for all $0 \leq r \leq n$.
	\end{lemma}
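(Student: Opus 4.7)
The plan is to prove the vanishing by induction on $r$, using the standard short exact sequence relating cohomology on $X$ and on the hypersurface $S$.

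The base case $r=0$ is exactly the assumption $h^i(X,A)=0$ for $i>0$. For the inductive step, assuming $h^i(X,A+(r-1)S)=0$ for all $i>0$, I would consider the short exact sequence
\[
0 \to \OO_X(A + (r-1)S) \to \OO_X(A + rS) \to \OO_S(A + rS) \to 0,
\]
whose associated long exact sequence in cohomology gives
\[
\cdots \to H^i\bigl(X,A+(r-1)S\bigr) \to H^i\bigl(X,A+rS\bigr) \to H^i\bigl(S,A+rS\bigr) \to \cdots
\]
The left term vanishes for $i>0$ by the inductive hypothesis, so it suffices to show that $H^i(S,A+rS)=0$ for $i>0$.

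The key observation is that since $S_1,\dots,S_k$ are the connected components of $S$, we have $S = \bigsqcup_j S_j$ as a scheme, so $\OO_S = \bigoplus_j \OO_{S_j}$ and therefore
\[
H^i\bigl(S, A+rS\bigr) = \bigoplus_{j=1}^k H^i\bigl(S_j, A+rS\bigr) = 0
\]
by the hypothesis on each component, valid in the range $1\leq r\leq n$. This closes the induction and yields $h^i(X,A+rS)=0$ for $i>0$ and all $0\leq r\leq n$.

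There is essentially no obstacle here: the only subtle point is recognising that the hypothesis that the $S_j$ are the \emph{connected components} of $S$ (rather than arbitrary irreducible components that might meet) is precisely what allows us to split $H^i(S,-)$ as a direct sum of the $H^i(S_j,-)$, bypassing any Mayer--Vietoris type argument.
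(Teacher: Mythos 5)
Your proof is correct, and it follows the same overall strategy as the paper (induction on $r$ via twisted structure sequences), but it handles the multiple components differently, and more efficiently. The paper never writes down the structure sequence of $S$ itself: within each inductive step it interleaves $k$ separate structure sequences, one per component, passing from $A+rS$ to $A+rS+S_1$ to $A+rS+S_1+S_2$ and so on up to $A+(r+1)S$, using at each sub-step the observation that $\OO_{S_j}(S_i)\isom\OO_{S_j}$ for $i\neq j$ to identify the restricted divisor with $(A+(r+1)S)|_{S_j}$. You instead use a single exact sequence
\[
0 \to \OO_X(A+(r-1)S) \to \OO_X(A+rS) \to \OO_S(A+rS) \to 0
\]
per value of $r$, and dispose of the cohomology of $S$ via the splitting $H^i(S,-)\isom\bigoplus_j H^i(S_j,-)$ over the disjoint components. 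The two uses of disjointness are equivalent in content, but yours is the cleaner packaging; the only point you should make explicit (and which your sequence silently uses) is that the ideal sheaf of the subscheme $S$ equals the line bundle $\OO_X(-S_1-\cdots-S_k)$ -- this is exactly where disjointness of the $S_j$ enters, since for overlapping components the ideal of the union is not the product of the ideals, and it is also what makes $S$ Cartier even though only the $S_j$ are assumed to be. With that remark added, your argument is a complete and slightly shorter proof of the lemma.
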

	
	\begin{proof}
		We first remark that, since the $S_j$ are pairwise disconnected, we have that $\OO_{S_j}(S_i) = \OO_{S_j}$ when $i\neq j$ and $\OO_{S_j}(S)$ otherwise.
		We will proceed by induction on $r$, the base case being satisfied by assumption.
		The structure sequence for $S_1$ induces the following sequence:
		\begin{align*}
			\ldots \to H^i(X,A+rS) \to H^i(X,A+rS + S_1) &\to H^i(S_1,A+rS + S_1) \to \ldots
		\end{align*}
		The right hand being equal to $H^i\big(S_1,A+(r+1)S\big)$ is zero by our initial assumptions and so is the left hand by the inductive hypothesis; thus $H^i(X,A+rS + S_j) = 0$.
		Similarly, from the structure sequence for $S_2$ we get
		\begin{align*}
			\ldots \to H^i(X,A+rS + S_1) \to H^i(X,A+rS + S_1 + S_2) &\to H^i(S_2,A+rS + S_1 + S_2) \to \ldots
		\end{align*}
		Once more the right hand side is equals $H^i\big(S_1,A+(r+1)S\big)$ and is zero by our initial assumptions and by the previous step so is the left hand side.
		We thus conclude that $H^i(X,A+rS + S_1 + S_2) = 0$.
		Repeating the argument for $S_3,\dots, S_k$ we deduce that 
		\[
		H^i(X,A+rS + S_1 + S_2 + \ldots + S_k) = H^i\big(X,A+(r+1)S\big) = 0,
		\]
		proving the inductive step.
	\end{proof}
	
	\begin{proposition}\label{prop:contractionNeg}
		Let $X$ be a projective threefold, $A$ an ample divisor and $S_1,S_2$ irreducible hypersurfaces so that $\OO_X(S_i)$ are Cartier.
		Suppose that $N$ is a nef but not ample divisor and that we have $A \prec N\prec S_2 \preceq S_1$.
		Write $C_1, \dots, C_k$ for the connected components of $C \defeq S_1 \cap S_2$, and assume that 
		\[
		S_2\cdot C_i < 0, \quad 2g_i -2 + S_1\cdot C_i < 0 \quad \text{ and } \quad h^0(C,\OO_{C}(rN)) \neq 0,
		\]
		for some $r \gg 0$, where $g_1$ denotes the arithmetic genus of $C_i$.
		
		Then $N$ is semi-ample.	
	\end{proposition}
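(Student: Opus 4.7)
The strategy is to show that $|rN|$ is base-point-free for some $r \gg 0$, which yields semi-ampleness. The proof naturally splits into three steps: confining the base locus to $C$, lifting sections from $C$ to $X$ via cohomological vanishing, and producing enough sections on each component $C_i$.

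\textbf{Step 1 (base locus in $C$).} The chain $A \prec N \prec S_2 \preceq S_1$ implies that, for every sufficiently divisible $r$, the class $rN$ admits a representative of the form (ample) $+$ (positive multiple of $S_i$), for each $i=1,2$. Since ample classes are base-point-free after passing to a multiple, $\Bs(|rN|)\subseteq S_i$, and intersecting gives $\Bs(|rN|)\subseteq S_1\cap S_2 = C$.

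\textbf{Step 2 (lifting sections from $C$).} We aim to prove that the restriction map
\[
H^0\big(X,\mathcal{O}_X(rN)\big)\twoheadrightarrow H^0\big(C,\mathcal{O}_C(rN)\big)
\]
is surjective for $r\gg 0$, i.e.\ that $H^1(X,\mathcal{I}_C(rN))=0$. Since $C=S_1\cap S_2$ scheme-theoretically, the Koszul resolution of $\mathcal{I}_C$ reduces this to cohomological vanishing statements on $S_j$ and on each $C_i$. On $C_i$ the hypothesis $2g_i-2+S_1\cdot C_i<0$ together with the nefness of $N$ gives $\deg\mathcal{O}_{C_i}(rN-S_1)\geq -S_1\cdot C_i > 2g_i-2$, whence $H^1(C_i,\mathcal{O}_{C_i}(rN-S_1))=0$ by Serre duality; the hypothesis $S_2\cdot C_i<0$ provides the parallel vanishing for the analogous restriction sequences on $S_2$. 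Lemma \ref{lem:generalVanishing} then propagates these vanishings from each surface $S_j$ up to $X$.

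\textbf{Step 3 (base-point-freeness on $C$).} By Step 2, it suffices to produce, for every $C_i$, a section of $\mathcal{O}_{C_i}(rN)$ without base-points on $C_i$. Nefness forces $N\cdot C_i\geq 0$: on components with $N\cdot C_i>0$, $\mathcal{O}_{C_i}(rN)$ becomes very ample for $r\gg 0$, hence base-point-free; on components with $N\cdot C_i=0$, base-point-freeness is equivalent to $\mathcal{O}_{C_i}(rN)\cong\mathcal{O}_{C_i}$, and the hypothesis $h^0(C,\mathcal{O}_C(rN))\neq 0$ (for $r$ sufficiently divisible) is precisely what is needed to force the relevant torsion condition. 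Combining, for $r$ both large and sufficiently divisible, $|rN|$ is base-point-free on $C$, and hence globally by Step 1.

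The technical heart is Step 2: one must carefully order the restriction sequences, first descending from $X$ to the surfaces $S_j$ and then to the curves $C_i$, and invoke Lemma \ref{lem:generalVanishing}, whose formulation was designed for exactly this iterative bootstrap, to transport vanishings from the individual curves back up to $X$. A secondary subtlety lies in Step 3, where one must upgrade a single section on $C$ (which may vanish on some components) to a collection of sections separating every $C_i$; this is where the interplay between the nefness of $N$, the section hypothesis, and the freedom to pass to large and divisible $r$ becomes essential.
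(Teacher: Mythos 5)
Your proposal follows the same overall strategy as the paper's proof: confine the base locus of $|rN|$ to $C$ using the chain $A \prec N \prec S_2 \preceq S_1$, lift sections from $C$ via cohomological vanishing propagated by Lemma \ref{lem:generalVanishing}, and use $h^0(C,\OO_C(rN)) \neq 0$ together with nefness to handle each component $C_i$ (ample or trivial, hence base-point-free up to a multiple). Your Steps 1 and 3 match the paper's; Step 1 is in fact justified more carefully than in the paper, which only remarks that curves trivial against $N$ lie among the $C_i$.

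The divergence, and the gap, is in Step 2. The paper does not use the Koszul resolution: it lifts in two stages, $H^0(X,N) \to H^0(S_1,N) \to H^0(C,N)$, so the only vanishings it needs are $h^1(X,N-S_1) = 0$ and $h^1(S_1,N-S_2)=0$. Your Koszul route needs, besides $h^1(X,rN-S_i)=0$, also the vanishing of $H^2(X,rN-S_1-S_2)$ appearing in the long exact sequence; this term lives on $X$, not on the $S_j$ or the $C_i$, and your sketch never addresses it, so the surjectivity claimed in Step 2 does not yet follow. It is fillable by the same machinery (Lemma \ref{lem:generalVanishing} propagates $h^2$ as well, $h^2$ on curves vanishes for dimension reasons, and $\deg\OO_{C_i}(rN-S_1-S_2) \geq -S_1\cdot C_i - S_2\cdot C_i > 2g_i-2$ by the two hypotheses), but it must be carried out, or avoided by restricting through $S_1$ as the paper does. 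Moreover, ``Lemma \ref{lem:generalVanishing} then propagates these vanishings'' compresses what is the bulk of the paper's proof: the lemma requires vanishing on $S_j$ for \emph{every} intermediate twist, and those are obtained by a second application of the lemma on $S_j$, which forces one to write $N = mA + kS_1$, $S_1 = lS_2 - A$ and repeatedly repack divisors between their $(A,S_1)$- and $(A,S_2)$-expressions, using $\deg\OO_{C_i}(S_j)<0$ to reduce to the endpoint divisors $N-S_1$ and $N-S_2$. Your endpoint estimate for $N-S_1$ is the correct one; but note that for $N-S_2$ the hypothesis $S_2\cdot C_i<0$ alone gives only $\deg\OO_{C_i}(rN-S_2)>0$, not $>2g_i-2$. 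Interestingly, the paper's own proof is equally thin at this exact point (it asserts $\deg\OO_{C_i}(S_2)^{\vee} \geq \deg\OO_{C_i}(S_1)^{\vee}$, which is the reverse of what $S_2 \preceq S_1$ and ampleness of $A$ yield on $C_i$), so there you have reproduced, rather than introduced, a weakness.
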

	
	\begin{proof}
		Since $A$ is ample and $A \prec N\prec S_2 \prec S_1$, any curve that is zero against $N$ is negative against the $S_i$;
		it is therefore contained in both, i.e.\ it is one of the $C_i$.
		Thus it suffice to show that a multiple of $N$ does not have any base locus on the $C_i$.
		We will show that the restriction homomorphisms
		\[\setlength{\arraycolsep}{2pt}
		\begin{array}{ccccc}
			H^0(X,N) &\overset{\res}{\longrightarrow} &H^0(S_1,N) &\to &H^1(X,N-S_1)\\
			H^0(S_1,N) &\overset{\res}{\longrightarrow} &H^0(C,N) &\to &H^1(S_1,N-S_2)
		\end{array}
		\]
		are both surjective, or equivalently that $h^1(X,N-S_1) = h^1(S_1,N-S_2) =0$.
		Since $h^0(C,\OO_C(N)) \neq 0$, $\OO_{C_i}(N)$ is either ample or trivial; in any case, up to a multiple, it has no base points.
		
		Up to possibly scaling $A$ and $N$ we may assume that $h^1(X,A) = h^1(S_1,A) = 0$ and that there exists integers $m, k, l$ so that
		\[
		N = mA + kS_1 \quad \text{ and } \quad S_1 = lS_2 - A;
		\]
		we therefore have $N = (m - k)A + lkS_2$, which implies that $m-k > 0$.
		Note that $h^1(X,N-S_1) = h^1(X,mA + (k-1)S_1)$ and thus, by Lemma \ref{lem:generalVanishing}, it suffices to show that
		\[
		h^1(S_1, mA + r_1S_1) =0,
		\]
		for $1\leq r_1 \leq k-1$.
		However we have
		\begin{align*}
			mA +r_1S_1 = mA + r_1(lS_2 - A) = (m-r)A + lr_1S_2 = (m-k + k-r_1)A + lr_1S_2
		\end{align*}
		and, once again by Lemma \ref{lem:generalVanishing}, it suffices to show that 
		\[
		h^1(C_i, (m-r_1)A + r_2 S_2) = 0
		\]
		for all $1\leq r_2 \leq lr_1$.
		Since $\deg \OO_{C_i}(S_2) < 0$ it suffices to prove that $h^1(C_i, (m-r_1)A + lr_1 S_2)$ vanishes.
		Finally, repacking the divisor, we have
		\begin{align*}
			(m-r_1)A + lr_1 S_2 = (m-r)A + r_1(S_1 +A) = mA +r_1S_1
		\end{align*}
		and, once again because $\deg \OO_{C_i}(S_1) < 0$ and $r_1 \leq k -1$, it suffices to prove vanishing for the divisor $mA + (k-1)S_1 = N - S_1$,
		for which we get
		\[
		\deg \OO_{C_i}(N - S_1) \geq \deg \OO_{C_i}(S_1)^{\vee} > 2g_i -2;
		\]
		$\OO_{C_i}(N - S_1)$ being non-special we conclude.
		
		The vanishing of $H^1(S_1,N-S_2)$ is obtained similarly:
		we have $N-S_2 = (m-k)A + (lk-1) S_2$ and so, by Lemma \ref{lem:generalVanishing}, it suffices to show that 
		\[
		h^1(C_i,(m-k)A + r_2S_2) = 0
		\]
		for all  $1\leq r_2 \leq lk-1$;
		$\deg \OO_{C_i}S_2$ being negative, it suffices to prove vanishing for $r_2 = lk-1$.
		Repacking the divisor we get $(m-k)A + (lk-1) S_2 = N-S_2$ for which we have
		\[
		\deg\OO_{C_i}(N-S_2) \geq \deg\OO_{C_i}(S_2)^{\vee} \geq \deg\OO_{C_i}(S_1)^{\vee} > 2g_i -2,
		\]
		i.e.\ it is non-special, concluding the proof.
	\end{proof}
	
	In Proposition \ref{prop:contractionNeg} the assumption that $S_2\cdot C_i <0$ is crucial in order to deduce that $N \prec S_2$ and be able to use Lemma \ref{lem:generalVanishing}.

	\begin{proposition}\label{prop:contractionZero}
		Let $X$ be a projective threefold, $A$ an ample divisor and $S_1,S_2$ irreducible hypersurfaces so that $\OO_X(S_i)$ are Cartier.
		Suppose that $S_2$ is nef but not ample, $A\prec S_2 \preceq S_1$ and that 
		\[
		h^1(X,S_2-S_1) = 0, \quad h^1(S_1,\OO_{S_1}) = 0 \quad \text{ and } \quad h^0(C,\OO_C(S_2)) \neq 0,
		\]
		where $C \defeq S_1 \cap S_2$.
		
		Then $S_2$ is semiample.
	\end{proposition}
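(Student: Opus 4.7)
I plan to mirror the strategy of Proposition~\ref{prop:contractionNeg}: exhibit some $m \gg 0$ for which $|mS_2|$ is base-point-free, thereby establishing semiampleness of $S_2$. The two given vanishing hypotheses yield immediately the two restriction surjections. From the short exact sequences
\[
0 \to \OO_X(S_2 - S_1) \to \OO_X(S_2) \to \OO_{S_1}(S_2) \to 0
\]
and, using that $C = S_1\cap S_2$ is cut on $S_1$ by the Cartier divisor $S_2|_{S_1}$,
\[
0 \to \OO_{S_1} \to \OO_{S_1}(S_2) \to \OO_C(S_2) \to 0,
\]
combined with $h^1(X,S_2-S_1)=0$ and $h^1(S_1,\OO_{S_1})=0$, one obtains
\[
H^0(X,\OO_X(S_2)) \twoheadrightarrow H^0(S_1,\OO_{S_1}(S_2)) \twoheadrightarrow H^0(C,\OO_C(S_2)).
\]

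Next I would verify that some multiple $\OO_C(mS_2)$ is globally generated. Since $S_2$ is nef, $\deg\OO_{C_i}(S_2)\geq 0$ on every component $C_i$ of $C$; combined with $h^0(C,\OO_C(S_2))\neq 0$, on each $C_i$ on which a global section is nonzero, $\OO_{C_i}(S_2)$ is either of strictly positive degree (so $\OO_{C_i}(mS_2)$ is globally generated for $m$ large) or trivial. A uniform multiple $m$ then yields $\OO_C(mS_2)$ globally generated.

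Third, I would pin down the base locus of $|mS_2|$ on $X$. The non-ampleness of $S_2$ together with $A\prec S_2\preceq S_1$ forces a numerical relation $S_2\equiv \alpha A+\beta S_1$ with $\alpha,\beta>0$. After passing to a multiple clearing denominators, sections of $|mS_2|$ obtained as products of sections in $|m\alpha A|$ (available by ampleness of $A$) with the canonical section cutting out $m\beta S_1$ have common vanishing locus inside $S_1$; combined with the tautological section vanishing on $mS_2$, this shows $\Bs(|mS_2|)\subset S_1\cap S_2 = C$. Lifting globally generated sections of $\OO_C(mS_2)$ via the surjections at level $m$ then produces, for each $p\in C$, a section of $|mS_2|$ on $X$ not vanishing at $p$, so the base locus is empty.

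The main obstacle will be extending the two surjections of the first step from level $m=1$ to arbitrary $m$, that is, verifying $h^1(X,mS_2 - S_1) = 0$ and $h^1(S_1,(m-1)S_2|_{S_1}) = 0$ for all $m \geq 1$. The natural approach is to mirror the telescoping argument of Proposition~\ref{prop:contractionNeg} and apply Lemma~\ref{lem:generalVanishing}, iteratively reducing the vanishing on $X$ to vanishing on $S_1$ and then on $C$, where non-speciality follows from the non-negative degree computations; one can fall back on Kawamata--Viehweg vanishing by noting that $S_2$ is nef and big, being a positive sum of an ample and an effective divisor.
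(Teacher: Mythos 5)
Your level-one surjections and the localization $\Bs(|mS_2|)\subset C$ via the decomposition $S_2\equiv\alpha A+\beta S_1$ are fine, and both appear (in compressed form) in the paper's proof. The genuine gap is precisely the step you flag as ``the main obstacle'': lifting sections of $\OO_C(mS_2)$ for $m\gg 0$ requires $h^1(X,mS_2-S_1)=0$ and $h^1\big(S_1,(m-1)S_2|_{S_1}\big)=0$, and neither of your fallbacks can produce these. The telescoping of Lemma~\ref{lem:generalVanishing} ultimately reduces to vanishing on the curve components $C_i$, which in Proposition~\ref{prop:contractionNeg} comes from non-speciality forced by the negativity hypotheses $S_2\cdot C_i<0$, $2g_i-2+S_1\cdot C_i<0$; here the degrees are $\geq 0$, and in the critical case $S_2\cdot C_i=0$ with $\OO_{C_i}(S_2)$ trivial the relevant group is $h^1(C_i,\OO_{C_i})=g_i\neq 0$. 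Kawamata--Viehweg is likewise unavailable: it would need $(m-1)S_2|_{S_1}-K_{S_1}$ nef and big, but in the intended application (Proposition~\ref{prop:Qcan=>potContr}) one has $\big(S_2|_{S_1}\big)^2=\sum_i S_2\cdot C_i=0$, so $S_2|_{S_1}$ is not big, and the difference is negative on the $C_i$ as soon as $K_{S_1}$ is positive on them. Worse, the vanishing you want is simply false there: since $\big(S_2|_{S_1}\big)^2=0$, Riemann--Roch on $S_1$ gives $h^1\big(S_1,(m-1)S_2|_{S_1}\big)\geq -\chi\big((m-1)S_2|_{S_1}\big)=\tfrac{m-1}{2}\,K_{S_1}\cdot S_2|_{S_1}-\chi(\OO_{S_1})$, which tends to infinity with $m$ whenever $K_{S_1}\cdot S_2|_{S_1}>0$ (e.g.\ $S_1$ of degree $n_1\geq 5$).

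The idea your proposal is missing, and which lets the paper stay entirely at level one, is this: on a component $C_i$ with $S_2\cdot C_i=0$, a nonzero section of the degree-zero bundle $\OO_{C_i}(S_2)$ is \emph{nowhere} vanishing, since a degree-zero bundle on an irreducible projective curve with a section is trivial. Hence a level-one lift through your two surjections already produces a member of $|S_2|$ that misses $C_i$ entirely, so every $S_2$-trivial component of $C$ lies outside $\Bs(|S_2|)$, and (taking powers of sections) outside $\Bs(|mS_2|)$ for every $m$. Combined with the fact that base curves must be $S_2$-trivial and contained in $C$, the base locus of a suitable multiple is a finite set of points, and Zariski's theorem (a linear system with finite base locus becomes base point free after passing to a multiple) concludes --- this is the paper's closing sentence. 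In particular you never need global generation of $\OO_C(mS_2)$ nor any cohomological input beyond the two vanishings in the hypotheses; note also that the level-$m$ surjectivity you were after does hold on the trivial components, but it is deduced from level one (multiply a lift restricting to $1$ on $C$ by arbitrary sections), not from any vanishing theorem.
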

	
	\begin{proof}
		If $S_2 \sim S_1$ then any curve in the base locus of $S_2$ would be contained in $S_1$ too.
		If on the other hand $A\prec S_2 \prec S_1$, since $A$ is ample, any curve in the base locus of $S_2$ would be zero against it, thus negative against $S_1$.
		In any case, it suffices to show that $S_1$ has no base curves along $S_1$.
		
		Both restriction homomorphisms
		\[\setlength{\arraycolsep}{2pt}
		\begin{array}{cccccc}
			H^0(X,S_2) &\overset{\res}{\longrightarrow} &H^0(S_1,S_2) &\to &H^1(X,S_2-S_1)&\\
			H^0(S_1,S_2) &\overset{\res}{\longrightarrow} &H^0(C,S_2) &\to &H^1(S_1,S_2-C) &\isom H^1(S_1,\OO_{S_1}) 
		\end{array}
		\]
		are surjective since we assume that $h^1(X,S_2-S_1) = h^1(S_1,\OO_{S_1}) = 0$.
		Since $h^0(C,\OO_C(S_2)) \neq 0$, for any component $C_i$ of $C$ so that $S_2 \cdot C_i =0$, we have $\OO_{C_i}(S_2) \sim \OO_{C_i}$.
		Thus $S_2$ does not contain any of the curves that is zero against in its base locus.
		Therefore passing to a multiple of it we may get rid of any isolated base points, proving our claim.
	\end{proof}

	\subsection{SQMs on complete intersections}
	
	\begin{lemma}\label{lem:contracitonDecompBundle}
		Let $C$ be a smooth curve $\Ll_1,\Ll_2$ line bundles on $C$ and $P = \PP_C(\Ll_1 \oplus \Ll_2)$ together with the projection $r$ to $C$.
		Assume that $\Ll_1 \oplus \Ll_2^{\vee}$ is anti-ample and denote by $\sigma_i\colon C \to P$ the sections corresponding to $\Ll_i \to  \Ll_1 \oplus \Ll_2$, with image $\gamma_i$.
		
		We then have $\gamma_1^2 < 0, \gamma_2^2 > 0$ and $\gamma_1\cdot \gamma_2 = 0$.
		Moreover, these imply that $\gamma_2$ is a base point free divisor whose corresponding contraction contracts $\gamma_1$.
	\end{lemma}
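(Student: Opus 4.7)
The plan is to normalize the projective bundle so the structure becomes transparent. Since $\PP_C(E)$ is unchanged by tensoring $E$ with a line bundle, $P \isom \PP_C(\OO_C \oplus M)$ with $M \defeq \Ll_2 \otimes \Ll_1^{\vee}$ \emph{ample} (reading the hypothesis "$\Ll_1 \oplus \Ll_2^{\vee}$ anti-ample" as the tensor product $\Ll_1 \otimes \Ll_2^{\vee}$). Under this identification, $\gamma_1$ is the section coming from the $\OO_C$ summand and $\gamma_2$ the one from $M$; being complementary direct summands, the two subbundles never meet in the same fiber, so $\gamma_1 \cap \gamma_2 = \emptyset$ and in particular $\gamma_1 \cdot \gamma_2 = 0$.

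For the self-intersections I would invoke the Grothendieck relation on $P$: with $\xi = c_1(\OO_P(1))$ and $f$ a fiber class, $\xi \cdot f = 1$ and $\xi^2 = \xi \cdot r^* c_1(\OO_C \oplus M) = \deg M$. Writing $\gamma_2 \equiv \xi$ and $\gamma_1 \equiv \xi - r^*M$ (the two sections correspond to the quotients $\OO_C \oplus M \to M$ and $\OO_C \oplus M \to \OO_C$ respectively), a routine computation gives $\gamma_1^2 = -\deg M < 0$ and $\gamma_2^2 = \deg M > 0$. Alternatively, one can read these off from the normal bundles $N_{\gamma_i/P} = \Ll_j \otimes \Ll_i^{\vee}$ of sections of a decomposable rank-$2$ bundle.

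Next I would show that $k\gamma_2$ is base point free for $k \gg 0$. Pick $k$ so that $kM$ is very ample. The pushforward
\[
r_* \OO_P(k\gamma_2) \isom \operatorname{Sym}^k(\OO_C \oplus M) \isom \bigoplus_{i=0}^{k} M^{\otimes i}
\]
yields $H^0(P, k\gamma_2) = \bigoplus_{i=0}^{k} H^0(C, M^{\otimes i})$. Two extremal sub-linear systems control the base locus: the $i=0$ summand yields the distinguished divisor $k\gamma_2$ itself, while the $i=k$ summand produces divisors of the form $k\gamma_1 + r^*D$ with $D \in |kM|$. A base point of $|k\gamma_2|$ would have to lie on $\gamma_2$ (by the first family) \emph{and} on every divisor of the second family; but $\gamma_1 \cap \gamma_2 = \emptyset$ and $|kM|$ is base point free on $C$, so no such point exists. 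Finally, the induced morphism $\phi = \phi_{|k\gamma_2|}$ restricts to a constant on $\gamma_1$ because $\OO_{\gamma_1}(k\gamma_2) \isom \OO_{\gamma_1}$ (the intersection number $\gamma_1 \cdot \gamma_2 = 0$ together with $\gamma_1 \isom C$ irreducible forcing the restriction to be trivial), so $\gamma_1$ is contracted.

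The main obstacle is bookkeeping: matching the paper's convention for which section carries $\Ll_i$ versus its quotient (hence which of $\gamma_1, \gamma_2$ is the $(-\deg M)$ section), and ensuring that the pushforward/global-section computation is robust enough that the two extremal sub-linear systems really have empty combined base locus---in particular handling the subtlety that for low degrees $H^0(C, M^{\otimes i})$ could be zero for small $i$, which is circumvented precisely by taking $k$ large enough that $kM$ is very ample so the $i = k$ piece alone suffices.
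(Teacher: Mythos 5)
Your proof is correct, and it splits into a half that mirrors the paper and a half that is genuinely different. For the intersection numbers, the paper works unnormalized, quoting Hartshorne's formulas $\OO_P(\gamma_i)=r^*\Ll_j^{\vee}\otimes\OO_P(1)$ and $\sigma_i^*\OO_P(1)=\Ll_i$ to get $\gamma_1^2=\deg(\Ll_1\otimes\Ll_2^{\vee})<0$, $\gamma_1\cdot\gamma_2=0$ and $\gamma_2^2=\deg(\Ll_1^{\vee}\otimes\Ll_2)>0$; your twist to $\PP_C(\OO_C\oplus M)$ with $M=\Ll_2\otimes\Ll_1^{\vee}$, plus the Grothendieck relation, gives the same numbers, and your resolution of the sub-versus-quotient bookkeeping (taking $\gamma_i$ to be the section of the quotient onto the $i$-th summand) is exactly the convention the paper's formulas encode, the one forced by the asserted sign $\gamma_1^2<0$. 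Where you genuinely diverge is the contraction claim: the paper argues softly that $\gamma_2$ is nef with null locus exactly $\gamma_1$, that $\gamma_1$ is not in the base locus, and from this asserts base point freeness; you instead compute $H^0(P,k\gamma_2)$ via $r_*\OO_P(k\gamma_2)\cong\operatorname{Sym}^k(\OO_C\oplus M)\cong\bigoplus_{i=0}^{k}M^{\otimes i}$ and exhibit two sub-systems, $\{k\gamma_2\}$ and $k\gamma_1+r^*|kM|$, whose base loci are disjoint. (For the pushforward you need the linear, not merely numerical, equivalence $\OO_P(\gamma_2)\cong\OO_P(1)$; your identification of $\gamma_2$ with the quotient onto $M$ gives precisely this.) Your route buys rigor and in fact yields the correct statement: under the lemma's hypotheses $|\gamma_2|$ itself can have base points --- e.g.\ for $C$ of genus $2$, $\Ll_1=\OO_C(-p)$, $\Ll_2=\OO_C(q)$ with $p+q\not\sim K_C$, one has $h^0(M)=1$ and every member of $|\gamma_2|$ passes through $\gamma_2\cap(f_p\cup f_q)$, where $f_p,f_q$ are the fibers over $p,q$ --- so only a sufficiently large multiple is free, i.e.\ $\gamma_2$ is semiample, which is what you prove and is what Proposition \ref{prop:SQMci} actually uses. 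One small repair: the parenthetical inference ``$\gamma_1\cdot\gamma_2=0$ with $\gamma_1$ irreducible forces $\OO_{\gamma_1}(k\gamma_2)\cong\OO_{\gamma_1}$'' is not valid on its own, since a degree-zero line bundle on a curve need not be trivial; but the disjointness $\gamma_1\cap\gamma_2=\emptyset$, which you established, makes the section cutting out $k\gamma_2$ nowhere vanishing along $\gamma_1$, so the restriction is indeed trivial and $\phi_{|k\gamma_2|}$ contracts $\gamma_1$.
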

	
	\begin{proof}
		We will heavily utilize the formula
		\[
		\OO_P(\gamma_i) = r^*\Ll_j^{\vee} \otimes \OO_P(1),
		\]
		as well as the fact that $\sigma_i^*$ and $r_*$ have the same effect on $\gamma_i$, both found in \cite[V, Proposition 2.6]{Hartshorne}.
		
		By the correspondence between sections and injections to $\Ll_1 \oplus \Ll_2$ we have that $\sigma_i^*\OO_{P}(1) = \Ll_i$.
		We now may compute that
		\[
		\gamma_1^2 = \deg\left(\OO_P(\gamma_1)|_{\gamma_1}\right) =
		\deg\left(r^*\Ll_2^{\vee}|_{\gamma_1} \otimes \OO_{\gamma_1}(1)\right) = \deg\left(\Ll_1 \otimes\Ll_2^{\vee}\right) < 0.
		\]
		In a similar fashion we may compute that
		\[
		\gamma_1 \cdot \gamma_2 = \deg(\Ll_1^{\vee} \otimes \Ll_1) = 0 \quad \text{ and } \quad \gamma_2^2 = \deg(\Ll_1^{\vee} \otimes\Ll_2) > 0
		\]	
		Since $\gamma_2\cdot f = 1$, $\gamma_2$ is a nef divisor that is zero precisely on $\gamma_1$ and $\gamma_1$ is not contained in the base locus of $\gamma_2$, we deduce that $\gamma_2$ is base point free.
	\end{proof}
	
	\begin{lemma}\label{lem:blpNormalBundle}
		Let $U$ be a smooth quasi-projective variety and $\gamma$ a codimension $2$ complete intersection of smooth hypersurfaces $S_1$, $S_2$.
		Denote by $r\colon V \to U$ the blowup of $U$ along $\gamma$ with exceptional divisor 
		\[
		F \isom \PP(\OO_{\gamma}(S_1) \oplus \OO_{\gamma}(S_2)).
		\]
		
		Then, if we denote by $\sigma_i$ the section induced by $\OO_{\gamma}(S_i) \hookrightarrow \OO_{\gamma}(S_1) \oplus \OO_{\gamma}(S_2)$, we have
		\[
		N \isom \OO_{\gamma}(S_i-S_j) \oplus \OO_{\gamma}(S_j)
		\]
		where $N$ denotes the normal bundle of $\sigma_i$ in $V$ and we identify $\sigma_i$ with $\gamma$ via $r$.
	\end{lemma}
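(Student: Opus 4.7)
The plan is to realise $\sigma_i \subset V$ geometrically as the transverse intersection of two smooth divisors and then decompose $N$ accordingly. First I would identify $\sigma_i$ with $\tilde{S}_i \cap F$, where $\tilde{S}_i \subset V$ is the strict transform of $S_i$. Since $\gamma$ is Cartier in $S_i$ (it is cut out there by $S_j$), the restriction $r|_{\tilde{S}_i}\colon \tilde{S}_i \to S_i$ is an isomorphism, so $\tilde{S}_i \cap F$ meets every fibre of $F \to \gamma$ in a single point, that is, it is a section. One then checks—either by a local coordinate computation on the blowup or by matching with the formula $\sigma_i^*\OO_F(1) = \OO_\gamma(S_i)$ of Lemma \ref{lem:contracitonDecompBundle}—that this section is indeed the one coming from the injection $\OO_\gamma(S_i) \hookrightarrow \OO_\gamma(S_1) \oplus \OO_\gamma(S_2)$.

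With this identification in hand, $\tilde{S}_i$ and $F$ are smooth divisors in $V$ meeting transversally along $\sigma_i$, so the normal bundle splits canonically as
\[
N \isom N_{\tilde{S}_i/V}|_{\sigma_i} \oplus N_{F/V}|_{\sigma_i},
\]
and the problem reduces to computing each summand. Both computations flow from the single identity $r^*S_i = \tilde{S}_i + F$ in $\Pic(V)$. Restricting $\OO_V(\tilde{S}_i) = r^*\OO_U(S_i) \otimes \OO_V(-F)$ first to $\tilde{S}_i \isom S_i$ and then to $\sigma_i \isom \gamma$, and using that under the isomorphism $\tilde{S}_i \isom S_i$ the divisor $F|_{\tilde{S}_i}$ corresponds to $\gamma \subset S_i$ with class $S_j|_{S_i}$ (by the complete intersection hypothesis), one obtains $N_{\tilde{S}_i/V}|_{\sigma_i} \isom \OO_\gamma(S_i - S_j)$. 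Starting instead from $\OO_V(F) = r^*\OO_U(S_i) \otimes \OO_V(-\tilde{S}_i)$ and plugging in the value just computed gives $N_{F/V}|_{\sigma_i} \isom \OO_\gamma(S_j)$, which completes the proof.

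I expect the only genuine subtlety to be in the first step. The identification $F \isom \PP_\gamma(\OO_\gamma(S_1) \oplus \OO_\gamma(S_2))$ used in the statement differs from the Grothendieck identification $F \isom \PP(\mathcal{C}_{\gamma/U}) = \PP_\gamma(\OO_\gamma(-S_1) \oplus \OO_\gamma(-S_2))$ by a twist by $\OO_\gamma(S_1+S_2)$, and this twist swaps the natural labelling of the two tautological sections. Correctly matching the paper's $\sigma_i$—defined abstractly by $\OO_\gamma(S_i) \hookrightarrow \OO_\gamma(S_1) \oplus \OO_\gamma(S_2)$—with $\tilde{S}_i \cap F$ (rather than with $\tilde{S}_j \cap F$) is precisely what fixes the shape of the final answer; once this book-keeping is sorted out, the remaining steps are routine Picard arithmetic on the blowup.
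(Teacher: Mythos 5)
Your proof is correct and follows essentially the same route as the paper's: identify $\sigma_i$ with $\tilde{S}_i \cap F$, split $N \isom \OO_V(\tilde{S}_i)|_{\sigma_i} \oplus \OO_V(F)|_{\sigma_i}$ as a transverse complete intersection of divisors, and compute both summands from the single Picard relation $\tilde{S}_i = r^*S_i - F$. The only (harmless) differences are bookkeeping ones: the paper evaluates $\OO_V(F)|_{\sigma_i}$ via $F = r^*S_j - \tilde{S}_j$ together with the disjointness of $\tilde{S}_j$ from $\sigma_i$, whereas you restrict to $\tilde{S}_i \isom S_i$; and your explicit discussion of the conormal-twist labelling of the two sections spells out a point the paper's ``Notice that $\sigma_i$ is the complete intersection of $F$ with $\tilde{S}_i$'' leaves implicit.
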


	\begin{proof}
		We have the equalities
		\[
		\tilde{S_i} = r^*S_i - F  \implies
		\left\{
		\begin{array}{l}
			F = r^*S_j - \tilde{S_j}\\
			\tilde{S_i} = r^*(S_i-S_j) + \tilde{S_j}
		\end{array}
		\right.
		\]
		Notice that $\sigma_i$ is the complete intersection of $F$ with $\tilde{S_i}$ and $\OO_V(\tilde{S_j})|_{\sigma_i} \isom \OO_{\sigma_i}$.
		This gives
		\begin{align*}
			N &= \OO_{\sigma_i}(\tilde{S_i}) \oplus \OO_{\sigma_i}(F) = \OO_{\sigma_i}(r^*(S_i-S_j) + \tilde{S_j}) \oplus \OO_{\sigma_i}(r^*S_j - \tilde{S_j})\\
			&=\OO_{\sigma_i}(r^*(S_i-S_j)) \oplus \OO_{\sigma_i}(r^*S_j) \isom \OO_{\gamma}(S_i-S_j) \oplus \OO_{\gamma}(S_j).
		\end{align*}
	\end{proof}
	
	\begin{corollary}\label{cor:blowupUntilBalanced}
		Let $U$ be a smooth quasi-projective threefold and $\gamma$ a smooth curve that is a complete intersection of surfaces $S_1,S_2$.
		Assume further that $\OO_{\gamma}(S_i)$ are proportional in $\Pic(\gamma)$.
		
		Then there exists a finite sequence of blowups $(V_i,F_i) \to (V_{i-1},\gamma_{i-1})$, $i = 1,\dots, k$ over $\gamma$ so that:
		$(V_0,\gamma_0) = (U,\gamma)$;
		$F_i \to \gamma_{i-1}$ is a decomposable $\PP^1$-bundle with minimal section $\gamma_i$;
		the normal bundle of $\gamma_k \in V_k$ is balanced.
	\end{corollary}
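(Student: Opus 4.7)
The plan is to iteratively blow up the appropriate section at each stage, using Lemma \ref{lem:blpNormalBundle} to track the normal bundle, and to argue that an appropriate measure of unbalancedness decreases via a Euclidean-type descent until it reaches zero.

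Set $V_0 = U$ and $\gamma_0 = \gamma$ and proceed by induction.  At stage $i$, suppose one has a smooth curve $\gamma_{i-1} \subset V_{i-1}$ that is the transverse complete intersection of two smooth hypersurfaces $T_1^{(i-1)}, T_2^{(i-1)}$, with decomposable normal bundle $\OO_{\gamma_{i-1}}(T_1^{(i-1)}) \oplus \OO_{\gamma_{i-1}}(T_2^{(i-1)})$ whose summands are proportional in $\Pic(\gamma_{i-1})$.  Blowing up $\gamma_{i-1}$ in $V_{i-1}$ produces $V_i$ with exceptional divisor $F_i$, a decomposable $\PP^1$-bundle over $\gamma_{i-1}$.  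Define $\gamma_i$ to be its minimal section: by Lemma \ref{lem:blpNormalBundle} this is $\tilde{T}_j^{(i-1)} \cap F_i$ for an appropriate index $j$, which is the transverse complete intersection in $V_i$ of the smooth hypersurfaces $\tilde{T}_j^{(i-1)}$ and $F_i$.  Smoothness of $\tilde{T}_j^{(i-1)}$ follows since blowing up a smooth threefold along a smooth curve restricts to an isomorphism on every smooth hypersurface containing the curve.  Applying Lemma \ref{lem:blpNormalBundle} once more, the normal bundle of $\gamma_i$ in $V_i$ is $\OO_{\gamma_i}(T_j^{(i-1)} - T_k^{(i-1)}) \oplus \OO_{\gamma_i}(T_k^{(i-1)})$; proportionality in $\Pic$ is preserved since the new summands are integer combinations of the old ones.

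Termination comes from the proportionality hypothesis: both $\OO_\gamma(S_1)$ and $\OO_\gamma(S_2)$ lie in $\ZZ \cdot \Ll$ for some common $\Ll \in \Pic(\gamma)$, so the pair of integer exponents at each stage evolves by a signed subtraction.  This subtraction mirrors the classical Euclidean algorithm and reduces the two exponents to a common value in finitely many steps.  At that point the normal bundle takes the form $\Ll^m \oplus \Ll^m$, which is balanced.

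The main obstacle is identifying the minimal section at each step in a way that produces the correct \emph{descending} subtraction in the Euclidean algorithm, rather than one that causes the exponent pair to diverge.  This is a bookkeeping matter once one consistently identifies, at each stage, which of the two summands of the normal bundle corresponds to the minimal section via Lemma \ref{lem:blpNormalBundle}.  Once this is in place, classical Euclidean descent guarantees termination in finitely many steps, producing the desired balanced $\gamma_k \subset V_k$.
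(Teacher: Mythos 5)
Your proposal is correct and follows essentially the same route as the paper: iteratively blow up the appropriate section of each exceptional $\PP^1$-bundle, use Lemma \ref{lem:blpNormalBundle} (together with the fact that the section is the complete intersection of the exceptional divisor with a strict transform) to track the normal bundle, and conclude by Euclidean descent on the exponents of the common divisor class, terminating at a balanced bundle. The only cosmetic difference is that the paper organizes the descent via the quotient-remainder form of the Euclidean algorithm ($a_1 = m_1a_2 + a_3$, etc., blowing up the minimal section $m_i$ times at each stage), whereas you phrase it as repeated signed subtraction; these are the same algorithm.
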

	
	\begin{proof}
		Since $\OO_{\gamma}(S_i)$ are proportional, there exists a divisor $D \in \Pic(\gamma)$ so that $\OO_{\gamma}(S_i) \isom \OO_{\gamma}(a_iD)$;
		without loss of generality assume that $a_i$ are positive with $a_1 \geq a_2$.
		For any $a \in \ZZ$ we will abbreviate $\OO_{\gamma}(aD)$ with $\OO_{\gamma}(a)$.
		
		Applying the Euclidean algorithm for $a_1,a_2$ we obtain natural numbers $a_i, m_i$ so that
		\[
		\left\{
		\begin{array}{l}
			a_1 = m_1 a_2 + a_3\\
			a_2 = m_2 a_3 + a_4\\
			\dots \\
			a_n = m_n a_{n+1}.
		\end{array}
		\right.
		\] 
		We will obtain the desired sequence of blowups by first blowing up $\gamma$ and then, on every subsequent step blowing up the minimal section of the previous exceptional divisor.
		We describe the first step as all the subsequent ones are similar:	
		Let $r_1\colon (V_1,F_1) \to (U,\gamma)$ be the blowup of $U$ along $\gamma$ with exceptional divisor $F_1 \isom \PP(\OO_{\gamma}(a_1)\oplus \OO_{\gamma}(a_2))$.
		Then, by Lemma \ref{lem:blpNormalBundle}, the normal bundle $N_1$ of the minimal section $\gamma_1$ of $F_1$ in $V_1$ is isomorphic to
		\[
		\OO_{\gamma}(a_1 - a_2) \oplus \OO_{\gamma}(a_2)
		\] 
		under the identification of $\gamma_1$ with $\gamma$ under $r_1$.
		
		Repeat the process $m_1$-times and apply Lemma \ref{lem:blpNormalBundle} to conclude that the normal bundle of the minimal section $\gamma_{m_i}$ in this step is
		\[
		\OO_{\gamma}(a_1 - m_1a_2) \oplus \OO_{\gamma}(a_2) = \OO_{\gamma}(a_2) \oplus \OO_{\gamma}(a_3).
		\] 
		Repeating in total of $k = (m_1 + m_2 + m_3 + \ldots + m_n)$-times we get the the normal bundle of $\gamma_k$ in $U_k$ is
		\[
		\OO_{\gamma}(a_n) \oplus \OO_{\gamma}(m_na_{n+1}),
		\]
		which is balanced proving our claim.
	\end{proof}
	
	\begin{proposition}\label{prop:SQMci}
		Let $\gamma \subset X$ be a smooth curve on a threefold $X$ that is smooth along $\gamma$.
		Assume that there exists an open neighbourhood $\gamma \subset U \subset X$ and surfaces $S_1, S_2 \subset X$ so that $\gamma$ is the complete intersection of the $S_i \cap U$.
		
		Assume further that $\OO_{\gamma}(S_i)$ are proportional in $\Pic(\gamma)$, and that there exists a morphism $p\colon X \to Z$ contracting $\gamma$ to a point.
		Then there exists an SQM 
		\[
		\xymatrix@R=.4cm@C=0cm{
			\gamma \subset & X \ar@{..>}[rrrr]^{\chi} \ar[rrd]_p &&&& X^+ \ar[lld]^q &  \supset \gamma^+\\
			&&& Z
		}
		\]
		centered at $\gamma$.
		Moreover, the restriction of $\chi$ on $S_i$ contracts $\gamma$ and the strict transforms of the $S_i$ do not intersect along $\gamma^+$ (see Figure \ref{fig:SQM}).
	\end{proposition}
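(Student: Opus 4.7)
The plan is to construct $\chi$ by first resolving $\gamma$ via a tower of blowups until the normal bundle is balanced, then performing an Atiyah-type flop on the balanced model, and finally blowing back down in reverse order to produce $X^+$.

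Working on a quasi-projective neighborhood of $\gamma$ on which $\gamma = S_1 \cap S_2$, the proportionality hypothesis on $\OO_\gamma(S_i)$ is precisely what Corollary \ref{cor:blowupUntilBalanced} requires. Applying it produces a tower $\pi\colon V_k \to V_{k-1} \to \cdots \to V_0 = X$ in which each step blows up the minimal section of the previous exceptional divisor $F_i$, and the final curve $\gamma_k \subset V_k$ has balanced normal bundle $N_{\gamma_k/V_k} \isom \OO_\gamma(a)^{\oplus 2}$. Lemma \ref{lem:blpNormalBundle} allows one to track the strict transforms of $S_1, S_2$ through the tower: at each stage they meet the new exceptional divisor in two disjoint sections, one being the minimal section that gets blown up next and the other persisting into the next stage.

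On $V_k$, the balanced curve $\gamma_k$ is formally locally a family of small resolutions of conifold singularities, with $\tilde S_1$ and $\tilde S_2$ realizing the two distinct resolutions. The existence of the global contraction of $\gamma_k$ provided by $p \circ \pi$, together with this local product structure, yields a standard Atiyah flop $\psi\colon V_k \dashrightarrow V_k^+$ to a projective $\QQ$-factorial threefold in which the roles of the strict transforms of $S_1, S_2$ along the final exceptional divisor are swapped.

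Finally, I would construct $X^+$ by contracting the strict transforms $F_{k-1}^+, \ldots, F_1^+$ of the tower's exceptional divisors in reverse order. Each $F_i^+$ retains the structure of a $\PP^1$-bundle with a distinguished minimal section inherited from the flop's swap, so the contraction exists as a birational map; to upgrade it to a regular morphism, I would exhibit a nef divisor on each $V_i^+$ that is trivial precisely on the appropriate ruling curves and invoke Propositions \ref{prop:contractionNeg} and \ref{prop:contractionZero} to deduce its semiampleness. The stated properties of $\chi$—that $\chi|_{S_i}$ contracts $\gamma$ and that the strict transforms of $S_i$ in $X^+$ are disjoint along $\gamma^+$—then follow directly from the section-swapping of the flop combined with the bookkeeping of strict transforms. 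The main obstacle is this last step: verifying that the reverse contractions are genuine morphisms between projective $\QQ$-factorial varieties rather than merely birational operations, which is exactly what Propositions \ref{prop:contractionNeg}–\ref{prop:contractionZero} are designed to enable through their cohomological vanishing criteria, easily verifiable in our $\PP^1$-bundle setup.
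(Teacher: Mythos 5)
Your pagoda picture---blow up until the normal bundle is balanced, flop in the middle, blow down the other side---is exactly the paper's construction unrolled; the paper organizes the same geometry as an induction on the unbalancedness of $N_{\gamma/X}$, each step being one blowup, a flip of the minimal section supplied by the inductive hypothesis, and one blowdown. The problem is that at both places where a contraction must actually be shown to \emph{exist}, your argument has a genuine gap. For the middle step, ``standard Atiyah flop'' cannot be invoked: $\gamma_k$ has the same (arbitrary) genus as $\gamma$, not a rational $(-1,-1)$-curve, and $p\circ\pi$ is \emph{not} a small contraction of $\gamma_k$---it contracts all the exceptional divisors $F_1,\dots,F_k$ as well---so no flop-existence theorem applies relative to it; the formal-local picture produces the flop only analytically, and the projectivity and $\QQ$-factoriality of $V_k^+$ is precisely what remains to be proved. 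The paper settles this with an explicit global semiample divisor: after blowing up the balanced curve the exceptional divisor is $\gamma\times\PP^1$, and $\tilde S_2 + r^*p^*A$, with $A$ sufficiently ample on $Z$, is semiample, very ample off the exceptional divisor, and restricts there to $\pi^*\OO_{\PP^1}(1)$; its associated contraction is the other small model. Note that this uses the hypothesis $p\colon X\to Z$ in an essential way, which your proposal never does beyond forming the unusable composition $p\circ\pi$.

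For the way down, your idea of exhibiting a nef divisor trivial exactly on the ruling curves is the right one, but Propositions \ref{prop:contractionNeg} and \ref{prop:contractionZero} are the wrong certificates of semiampleness: their hypotheses (the cone positions $A\prec N\prec S_2\preceq S_1$, the conditions $S_2\cdot C_i<0$ and $2g_i-2+S_1\cdot C_i<0$ on the components of $S_1\cap S_2$, and the nonvanishing $h^0(C,\OO_C(rN))\neq 0$) are tailored to the Picard-rank-two blowup of $\PP^3$ along a linked curve, are not available in the generality of Proposition \ref{prop:SQMci} (which assumes nothing numerical beyond proportionality), and in the paper they serve the \emph{opposite} purpose---to produce the contraction $p$, which here is given. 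The contraction of each exceptional divisor on the way down is again obtained by an explicit divisor: the strict transform of $S_1$ is positive on the flipped curve and trivial, with no base locus, on the surviving section, so adding a sufficiently ample pullback from $Z$ gives a semiample divisor realizing the blowdown. There is also a bookkeeping error that undermines your final ``section-swapping'' conclusion: $\tilde S_2$ does not survive to the middle of the pagoda. Since the minimal section of $F_1$ is $\tilde S_1\cap F_1$ (Lemma \ref{lem:blpNormalBundle}), all subsequent blowup centers are disjoint from $\tilde S_2$, and the two surfaces cutting out $\gamma_k$ are $F_k$ and either $\tilde S_1$ or an earlier $F_j$, never $\tilde S_1$ and $\tilde S_2$. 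The claims about $\chi|_{S_i}$ and the disjointness of the strict transforms along $\gamma^+$ therefore require tracking strict transforms through every flip and blowdown---which is exactly the content the paper builds into its inductive hypothesis, and is why the induction, rather than the unrolled tower, is the cleaner organization of the proof.
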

	
	\begin{proof}
		Since $\OO_{\gamma}(S_i)$ are proportional, there exists a divisor $D \in \Pic(\gamma)$ so that $\OO_{\gamma}(S_i) \isom \OO_{\gamma}(a_iD)$;
		without loss of generality assume that $a_i$ are positive with $a_1 \geq a_2$.
		Denote by $r\colon W \to X$ the blowup of $X$ along $\gamma$ with exceptional divisor $F \isom \PP\left(\OO_{\gamma}(a_1D)\oplus \OO_{\gamma}(a_2D)\right)$.
		Keeping the notation of Lemma \ref{lem:contracitonDecompBundle} we will denote by $\gamma_i \subset F$ the sections corresponding to the bundles $\OO_\gamma(S_i)$.	
		We will prove the statement by induction on $k\defeq a_1 - a_2$.
		Note that by Corollary \ref{cor:blowupUntilBalanced} we can always reduce to the case $k=0$, so that will be our base case.
		
		Suppose that $k = 0$, so that	
		Then $F \sim \gamma \times \PP^1$.
		Denote by $\pi\colon F \to \PP^1$ the projection to the second factor.
		Then, for a sufficiently ample divisor $A$ on $Z$, $D \defeq \tilde{S_2} + r^*p^*A$ is very ample away from $F$ and 
		\[
		D|_F = \pi^*\OO_{\PP^1}(1). 
		\]
		In particular $D$ is semiample and the associated contraction $W \to X^+$ gives is the desired morphism.
		Note that $F$ and $\tilde{S_i}$ intersect along the sections corresponding to $\OO_\gamma(S_i)$ and so the restriction of $W \to X^+$ on the $\tilde{S_i}$ contracts these sections.
		Furthermore these sections are disjoint proving the last assertion too.
		
		Now assume that the statement is true for all values less than $k$.
		By Lemma \ref{lem:contracitonDecompBundle} the restriction of $\tilde{S_2}$ on $F$ is a semiample, whose corresponding contraction contracts precisely $\gamma_1$.
		Therefore, for a sufficiently ample divisor $A_Z$ on $Z$, $D  \defeq \tilde{S_2} + r^*p^*A_Z$ is semiample, and very ample away from $F$.
		Denoting by $p_1\colon W \to \hat{Z}$ its corresponding contraction as well as applying Lemma \ref{lem:blpNormalBundle} with the induction hypotheses we get a diagram
		\[
		\xymatrix@R=.1cm@C=.1cm{
			\gamma_1 \subset & W\ar[dd]_r \ar@{..>}[rrrr]^{\hat{\chi}\,\,} \ar[rrd]_{p_1} &&&& W^+ \ar[lld]^{q_1} &  \supset \gamma_1^+\\
			&&& \hat{Z}\ar[dd]_{\eta}\\
			& X \ar[rrd]_{p\,\,\,}\\
			&&& Z
		}
		\]	
		
		Consider the divisor $\tilde{S_1}^+ \defeq \hat{\chi}_*\tilde{S_1}$.
		Since $\tilde{S_1}$ is negative against $\gamma_1$, $\tilde{S_1}^+$ is positive against $\gamma_1^+$.
		On the other hand, $\tilde{S_1}$ being trivial against $\gamma_2$ and $\gamma_2$ being disjoin from the indeterminacy locus of $\hat{\chi}$, $\tilde{S_1}^+$ is trivial against $\gamma_2^+$ without containing any of these curves in its base locus.
		Therefore, again adding a sufficiently ample divisor $A_Z$ from $Z$, we obtain $D^+ \defeq \tilde{S_1}^+ + q_1^*\eta^*A_Z$, which is semiample and zero against curves covering $\hat{\chi}(F)$.
		The contraction $W^+ \to X^+$ corresponding to $D^+$ completes the diagram into
		\[
		\xymatrix@R=.1cm@C=.1cm{
			& W\ar[dd]_r \ar@{..>}[rrrr]^{\hat{\chi}\,\,} \ar[rrd]_{p_1} &&&& W^+\ar[dd]^s \ar[lld]^{q_1} &\\
			&&& \hat{Z}\ar[dd]_{\eta}\\
			\gamma \subset& X \ar[rrd]_{p\,\,\,}&&&& X^+\ar[lld]^{\,q} & \supset \gamma^+\\
			&&& Z
		}
		\]	
		with the induced map $\chi\colon X \psmap X^+$ over $Z$ being the desired SQM.
		By the inductive hypothesis $\hat{\chi}$ restricted on $\tilde{S_1}$ contracts $\gamma$ and, by construction $s$ restricted on $\tilde{S_1}^+$ is an isomorphism.
		This shows that $\chi$ restricted on $S_1$ contracts $\gamma$.
		On the other hand, the restriction of $X \rmap W^+$ on $S_2$ is an isomorphism, while $s$ contracts $\gamma_2^+$ which proves the same assertion for $\chi$ restricted to $S_2$.
		Finally, once again by the inductive hypothesis, $\tilde{S_1}^+$ and $F^+$ do not intersect along $\gamma_1^+$;
		however $s\big(\tilde{S_2}^+\big)$ intersects $\gamma^+$ precisely at the image of $F^+$, which completes the proof. 
	\end{proof}

	\begin{figure}[h!]
		\begin{tikzpicture}
			\node[inner sep=.3cm] (X) at (-3,0)
			{\includegraphics[width=2cm]{./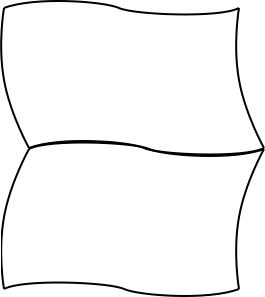}};
			
			\node[inner sep=.3cm] (S1) at (-3,.5) {$S_1$};
			\node[inner sep=.3cm] (S2) at (-3,-.5) {$S_2$};
			\node[inner sep=.3cm] (gamma) at (-4.2,0) {$\gamma$};
			
			\node[inner sep=.3cm] (W) at (-3,4)
			{\includegraphics[width=2cm]{./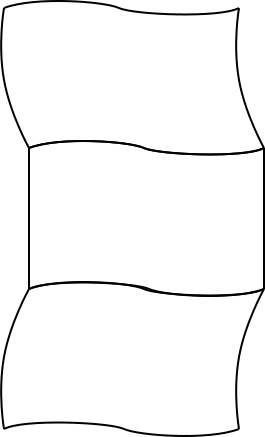}};
			
			\node[inner sep=.3cm] (S1) at (-3,5.1) {$\tilde{S_1}$};
			\node[inner sep=.3cm] (F) at (-3,4) {$F$};
			\node[inner sep=.3cm] (S2) at (-3,2.9) {$\tilde{S_2}$};
			\node[inner sep=.3cm] (gamma) at (-4.3,4.5) {$\gamma_1$};
			\node[inner sep=.3cm] (gamma) at (-4.3,3.45) {$\gamma_2$};
			
			\node[inner sep=.3cm] (W+) at (3,4)
			{\includegraphics[width=2cm]{./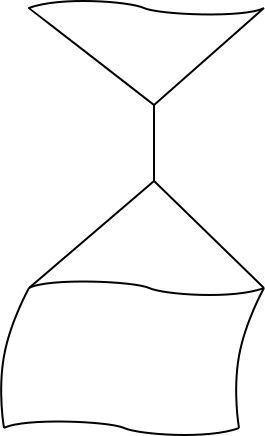}};
			
			\node[inner sep=.3cm] (S1) at (3.2,5.25) {$\tilde{S_1}^{\hspace{-1.2mm}+}$};
			\node[inner sep=.3cm] (F+) at (3.2,3.8) {$F^+$};
			\node[inner sep=.3cm] (S2) at (3,2.9) {$\tilde{S_2}^{\hspace{-1.2mm}+}$};
			\node[inner sep=.3cm] (gamma_1) at (3.6,4.5) {$\gamma_1^+$};
			\node[inner sep=.3cm] (gamma_2) at (4.3,3.45) {$\gamma_2^+$};
			
			\node[inner sep=.3cm] (X+) at (3,0)
			{\includegraphics[width=1.8cm]{./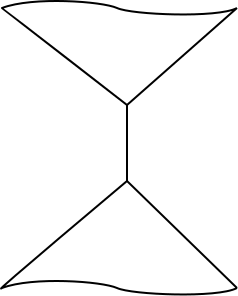}};
			
			\node[inner sep=.3cm] (S1) at (3.1,.75) {$S_1^+$};
			\node[inner sep=.3cm] (S2) at (3.07,-.75) {$S_2^+$};
			\node[inner sep=.3cm] (gamma) at (3.5,0) {$\gamma^+$};
			
			\draw[->] (W)--(X);
			\draw[->] (W+)--(X+);
			\draw[->,dotted] (W)--(W+) node[midway,above] {$\hat{\chi}$};
			\draw[->,dotted] (X)--(X+) node[midway,above] {$\chi$};
		\end{tikzpicture}
		\caption{The inductive step in the proof of Proposition \ref{prop:SQMci}}
		\label{fig:SQM}
	\end{figure}
	
	\begin{lemma}\label{lem:proportionality}	
		In the setting of Proposition \ref{prop:SQMci}, if $X$ is has Picard rank $2$ and $\OO_X(S_i)$ are $\QQ$-Cartier then the assumption on $\OO_{\gamma}(S_i)$ is superfluous;
		that is $\OO_{\gamma}(S_i)$ are proportional.
	\end{lemma}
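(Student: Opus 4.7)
The plan is to exploit the Picard rank hypothesis, via the relative Picard rank of $p$, to obtain a $\QQ$-linear relation between $S_1$ and $S_2$ modulo pullbacks from $Z$, which after restricting to $\gamma$ yields the sought proportionality. First, I would observe that since $X$ has Picard rank $2$ and $p\colon X \to Z$ is a non-constant projective morphism contracting $\gamma$, the relative Picard rank satisfies $\rho(X/Z) = 1$; in particular $\Pic(X/Z)_\QQ$ is one-dimensional and $[\gamma]$ generates the unique extremal ray of $\NEb(X)$ contracted by $p$.

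Setting $n_i \defeq S_i \cdot \gamma$, the $\QQ$-Cartier divisor $\Delta \defeq n_2 S_1 - n_1 S_2$ satisfies $\Delta \cdot \gamma = 0$ and therefore becomes trivial in $\Pic(X/Z)_\QQ$. Hence there is a $\QQ$-Cartier divisor $M$ on $Z$ with $\Delta \sim_\QQ p^*M$ on $X$. Restricting to $\gamma$, and using that $p$ collapses $\gamma$ to a point so that $p^*M|_\gamma$ is $\QQ$-trivial in $\Pic(\gamma)_\QQ$, yields $n_2\,\OO_\gamma(S_1) \sim_\QQ n_1\,\OO_\gamma(S_2)$. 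Choosing $D$ to be a representative of the common class in $\Pic(\gamma)$ and clearing denominators produces integers $a_i$ with $\OO_\gamma(S_i) \isom \OO_\gamma(a_iD)$, which is exactly the proportionality hypothesis required in Corollary \ref{cor:blowupUntilBalanced}.

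The main obstacle I anticipate is the upgrade from the numerical statement $\Delta \cdot \gamma = 0$ (giving numerical triviality over $Z$) to the $\QQ$-linear equivalence $\Delta \sim_\QQ p^*M$. This hinges on the natural map $\Pic(X/Z)_\QQ \to N^1(X/Z)_\QQ$ being an isomorphism, which in turn follows from $h^1(X,\OO_X) = 0$ together with the contraction theorem; the former is automatic in the paper's primary application since $X$ is a rational threefold (cf.\ Remark \ref{rem:MDS}\eqref{it:MDSrem1}), and the latter is standard for the Mori-type contractions $p$ considered here. A minor technical point is dealing with possible torsion in $\Pic^0(\gamma)$ when passing from $\Pic(\gamma)_\QQ$ back to $\Pic(\gamma)$, but this can be absorbed by replacing $D$ with a suitable integer multiple.
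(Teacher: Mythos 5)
Your overall germ --- pair a pullback from $Z$ against the rank-two hypothesis and then restrict to $\gamma$, where pullbacks die --- is the same as the paper's, but the route you take through numerical classes opens a genuine gap at precisely the step you flag as the ``main obstacle''. The inference ``$\Delta \cdot \gamma = 0$, therefore $\Delta$ becomes trivial in $\Pic(X/Z)_\QQ$'' is not a formal consequence of $\rho(X/Z)=1$: degree zero on $\gamma$ only gives triviality in $N^1(X/Z)_\QQ$, whereas triviality in $\Pic(X/Z)_\QQ$ means $\Delta \sim_\QQ p^*M$, whose restriction to $\gamma$ is then $\QQ$-trivial --- and that is essentially the statement being proved, since a degree-zero line bundle on a curve of positive genus need not be torsion. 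The justification you offer for this descent, the contraction theorem, is not available here: in the setting of Proposition \ref{prop:SQMci} the map $p$ is merely \emph{some} morphism contracting $\gamma$ to a point, not a $K$-negative extremal contraction of a klt pair. Indeed, in the paper's actual application (Proposition \ref{prop:potContr=>MDS}) the contracted rays are typically $K_X$-\emph{positive} --- the SQMs being built are anti-flips, which is exactly why the paper constructs them by hand rather than by MMP machinery --- and for $K$-positive small contractions of positive-genus curves, line bundles that are numerically trivial over $Z$ genuinely need not descend (the obstruction lives in $R^1p_*\OO_X$, equivalently in $\Pic^0(\gamma)$).

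The paper's proof sidesteps numerical equivalence entirely and stays in $\Pic(X)_\QQ$: either $\OO_X(S_1)$ and $\OO_X(S_2)$ are already $\QQ$-proportional in $\Pic(X)$, in which case one restricts, or they are not, in which case (rank two) they span $\Pic(X)_\QQ$, so $p^*D_Z \sim_\QQ a_1S_1 + a_2S_2$ for any Cartier $D_Z$ on $Z$; restricting to $\gamma$ kills the left-hand side because a pullback along a constant map is trivial as an actual line bundle, and the resulting relation gives proportionality. No numerical-to-linear bridge is ever needed. Your argument can be repaired along similar lines: granting $h^1(X,\OO_X)=0$ (which the rank-two hypothesis on $\Pic(X)$ forces, since a positive-dimensional $\Pic^0$ has infinite rank), the class $p^*A_Z$ is a nonzero element of the one-dimensional subspace $\gamma^{\perp}\subset N^1(X)_\QQ$, so $\Delta \equiv c\,p^*A_Z$ and then $\Delta \sim_\QQ p^*(cA_Z)$; but this elementary argument must replace, not supplement, the appeal to the contraction theorem.
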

	
	\begin{proof}
		If the $\OO_X(S_i)$ are proportional in $\Pic(X)$ then clearly their restrictions are proportional in $\Pic(\gamma)$.
		If not let $D_Z$ be any Cartier divisor on $Z$ and $D \defeq p^*D_Z$.
		Since $\rho(X) = 2$ and $\OO_X(S_i)$ are $\QQ$-Cartier and not proportional, they span $\Pic(X)$;
		in particular there exists $a_1, a_2$ so that $D \sim \OO_X(a_1S_1) \otimes \OO_X(a_2S_2)$.
		Restricting on $\gamma$ we obtain
		\[
		\OO_{\gamma} \sim \OO_{\gamma}(D) \sim \OO_{\gamma}(a_1S_1) \otimes \OO_{\gamma}(a_2S_2),
		\]
		i.e.\ $\OO_{\gamma}(S_1)$ is proportional to $\OO_{\gamma}(S_2)$ in $\Pic(\gamma)$.
	\end{proof}

	\subsection{Potential contractibility}
	
	\begin{definition}
		Let $\gamma \subset X$ be a curve in a $\QQ$-factorial projective variety $X$.	
		We say that $\gamma$ is \emph{potentially contractible} if for every SQM $f_i\colon X \psmap X_i$ so that ${f_i}_*\gamma$ generates an extremal ray $R^i$ of $\NEb(X_i)$, $R^i$ is contractible.	
	\end{definition}
	
	\begin{remark}\label{rem:everyCurvePC}
		If $X$ is a Mori Dream Space then \emph{every} curve $\gamma \subset X$ is potentially contractible:
		if there are no SQMs from $X$ so that the strict transform of $\gamma$ generates an extremal ray, then we are automatically done;
		on the other hand if $f_i\colon X \psmap X_i$ is such an SQM then, by Remark \ref{rem:MDS}\eqref{it:MDSrem2}, $f_i$ is one of the SQMs of Definition \ref{def:MDS}\eqref{it:MDS3};
		in particular, by Definition \ref{def:MDS}\eqref{it:MDS2}, any nef divisor on $X_i$ that is perpendicular to the ray $R^i$ generated by ${f_i}_*\gamma$ is semiample, i.e.\ $R^i$ is contractible.
	\end{remark}
	
	\begin{proposition}\label{prop:potContr=>MDS}
		Let $\Gamma$ be a configuration of smooth pairwise skew curves $\gamma_i$ and $C \link{S_1,S_2} \Gamma$  be a skew $C$-rigid linkage.
		Denote by $X \to \PP^3$ the blowup along $C$.
		
		Then $X$ is a Mori Dream Space if and only if the curves $\gamma_i \subset X$ are potentially contractible.
	\end{proposition}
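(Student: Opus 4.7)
The forward direction is immediate from Remark \ref{rem:everyCurvePC}, since every curve in a Mori Dream Space is potentially contractible. The substance lies in the converse, which I propose to prove by induction on $k$, the number of distinct extremal rays $R_1, \ldots, R_k$ of $\NE(X)$ produced by Setup \ref{set:rigidLinkage}.

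By Proposition \ref{prop:conesRigidLinkage}, $\Mov(X) = \langle H, S_2\rangle$ is polyhedral and the only candidate Mori walls inside $\Mov(X)$ are the perpendiculars $D_1, \ldots, D_k$ to the rays $R_a$. Condition \eqref{it:MDS1} of Definition \ref{def:MDS} is automatic since $X$ is smooth rational, so it suffices to construct, inductively, the required SQMs and verify semiampleness of the nef boundary at each intermediate model. In the base case $k = 0$ one has $\Nef(X) = \Mov(X) = \langle H, S_2\rangle$, and both generators are clearly semiample ($H$ is the pullback of the ample class on $\PP^3$, while $S_2$ cuts out the linkage pencil with $S_1$).

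For the inductive step, the nef cone of $X$ is $\langle H, D_1\rangle$. Potential contractibility of $R_1$ (applied with $X_i = X$) supplies a morphism $p\colon X \to Z$ contracting precisely the curves in $R_1$. To flip across $D_1$ I invoke Proposition \ref{prop:SQMci}: the curves $\gamma_i$ with $i \in I_1$ are locally complete intersections of $S_1$ and $S_2$ (being components of $\Gamma \subset S_1 \cap S_2$), and Lemma \ref{lem:proportionality} supplies the proportionality of normal bundles for free since $\rho(X) = 2$. The resulting SQM $\chi\colon X \dashrightarrow X^+$ preserves $\Mov$, moves the nef chamber to $\langle D_1, D_2\rangle$, and the remaining $\gamma_i$ for $i \notin I_1$ survive in $X^+$ as smooth pairwise skew curves (being skew to those of $I_1$, they are disjoint from the indeterminacy locus of $\chi$) and continue to generate $R_2, \ldots, R_k$. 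Since any SQM out of $X^+$ is also an SQM out of $X$, the potential contractibility hypothesis transfers verbatim to $X^+$, and the inductive hypothesis completes the construction.

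Semiampleness of each bounding wall $D_a$ on the corresponding intermediate model is then obtained from Proposition \ref{prop:contractionNeg} (or Proposition \ref{prop:contractionZero} when $e_i = 0$), using the (skew)-rigidity bound on $e_i$ to supply the non-speciality input and the contraction produced by potential contractibility to supply the nonvanishing of $h^0(C, \OO_C(rN))$. The main obstacle I anticipate is the bookkeeping across the induction: one must verify that after each flip $\chi$ the numerical data $(g_i, d_i, e_i)$ of the surviving $\gamma_i$ and their positions with respect to $S_1^+, S_2^+$ are faithfully preserved, so that Proposition \ref{prop:SQMci} and the two contraction statements reapply unchanged at the next step; in particular one must check that no two previously distinct rays $R_a, R_b$ collapse onto a single ray in $N_1(X^+)$ and that the proportionality hypothesis of Lemma \ref{lem:proportionality} remains at hand throughout the chain.
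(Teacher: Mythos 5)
Your overall strategy coincides with the paper's: the forward direction via Remark \ref{rem:everyCurvePC}, and the converse by iteratively flipping the rays $R_a$ using the contraction supplied by potential contractibility together with Lemma \ref{lem:proportionality} and Proposition \ref{prop:SQMci}. However, your induction on $k$ does not close as formulated: the flipped model $X^+$ is an SQM of $X$, \emph{not} the blowup of $\PP^3$ along a curve obtained by a skew rigid linkage, so the inductive hypothesis --- which is the Proposition itself, a statement about such blowups --- cannot be invoked on $X^+$. The paper avoids this by never changing the ambient object: it runs a direct iteration along the chain of SQMs of the fixed $X$, checking after each flip whether $S_2$ has become semiample and, if not, using the last assertion of Proposition \ref{prop:SQMci} (the strict transforms of $S_1,S_2$ no longer meet along the flipped curves) to see that any curve in the stable base locus of $S_2$ on the new model is again one of the \emph{original} $\gamma_i$; this is what keeps the iteration running and makes it terminate. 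Your correct observation that potential contractibility transfers to $X^+$ (because every SQM of $X^+$ is an SQM of $X$) is precisely the point of the definition, and it is what replaces the appeal to an inductive hypothesis.

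The more substantive gap is your final paragraph. Semiampleness of the walls $D_a$ is not, and cannot be, obtained from Propositions \ref{prop:contractionNeg} and \ref{prop:contractionZero} with only rigidity as input: rigidity gives $e_i = S_2\cdot\gamma_i \leq 0$, whereas Proposition \ref{prop:contractionNeg} additionally requires $2g_i - 2 + S_1\cdot\gamma_i < 0$, and Proposition \ref{prop:contractionZero} requires cohomological vanishings and $h^0(C,\OO_C(S_2))\neq 0$; none of these follow from rigidity, which is exactly why Proposition \ref{prop:Qcan=>potContr} --- where those two propositions are genuinely used --- must impose the extra hypotheses \eqref{eq:Qcan=>potContr}. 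Moreover, the appeal is circular: on each intermediate model the ray spanned by the surviving $\gamma_i$ is extremal, so potential contractibility \emph{is} the existence of its contraction morphism, which is the same thing as semiampleness of a multiple of the supporting wall $D_a$ (pull back an ample divisor from the target); once you have that contraction there is nothing left for Proposition \ref{prop:contractionNeg} to prove. In the paper, the contraction statements belong exclusively to the converse direction (deducing contractibility from $\QQ$-canonicity), not to this Proposition, where contractibility is the hypothesis.
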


	\begin{proof}
		Assuming $X$ is a Mori Dream Space we conclude by Remark \ref{rem:everyCurvePC}.
		
		Suppose that $\gamma_i \subset X$ are potentially contractible.
		We will be working under the assumptions of Setup \ref{set:rigidLinkage}.
		Denote by $R_a$ the ray in $\NEb(X)$ generated by curves $\gamma_i$ whose indices lie $I_a$, $1 \leq a \leq k$ (see Lemma \ref{lem:skewLinkageInters} and Figure \ref{fig:NE(X)}).	
		We consider the rational map $X \rmap W$ given by the sections of $S_2$.
		We will show that $X \rmap W$ is a rational contraction factoring as
		\[
		\xymatrix@R=.2cm@C=.2cm{
			&X \ar[rd] \ar[ld] \ar@{..>}[rr]^{\chi_1\,\,} && X_1 \ar[ld] \ar@{..>}[rr]^{\,\,\,\,\chi_2} \ar[rd] && \ar[ld]&\dots& \ar[rd] \ar@{..>}[rr]^{\chi_{k}\quad}&& X_{k} \ar[ld] \ar[rd]
			\\
			\PP^3&& Z_1 && Z_2 &&&& Z_{k} && W,
		}
		\]
		where $\chi_a$ is the flip of the ray $R_a$ via the construction of Proposition \ref{prop:SQMci}.
		
		Recall that, for all $i \in I$, we may define the open subsets
		\[
		U_i \defeq X \setminus \underset{j\neq i}{\bigcup} \gamma_i
		\]
		so that $\gamma_i \subset U_i$ and $\gamma_i$ is the complete intersection of the surfaces $S_i \cap U_i$.
		
		We begin by flipping the ray $R_1$.
		By Lemma \ref{lem:skewLinkageInters} and Proposition \ref{prop:conesRigidLinkage} $R_1$ is extremal and thus by assumption contractible;
		let $X \to Z_1$ be the contraction.
		Since $X$ has Picard rank $2$ and is $\QQ$-factorial, by Lemma \ref{lem:proportionality}, all assumptions of Proposition \ref{prop:SQMci} are satisfied, giving us desired SQM $\chi_1\colon X \psmap X_1$.
		
		If $\OO_{X_1}(S_2)$ is semiample on $X_1$, then we are done.
		If not, let $\gamma$ be a curve on the stable base locus of $\OO_{X_1}(S_2)$ spanning an extremal ray $R_2$ of $\NE(X_1)$.
		Then $\gamma$ is contained in both $S_1$ and $S_2$.
		However, by the construction of Proposition \ref{prop:SQMci}, $\gamma$  has to be one of the components $\gamma_i$ of $\Gamma$.
		It is thus by assumption contractible and, once more by Lemma \ref{lem:proportionality} and Proposition \ref{prop:SQMci}, we may flip $R_2$.
		
		Applying the same argument iteratively we conclude.
	\end{proof}
	
	\begin{remark}\label{rem:naming}
		The type of the contraction $X_k \to W$ depends on the type of the linkage.
		If $C \link{S_1,S_2} \Gamma$ is unbalanced, then $X_k \to W$ is a divisorial contraction with exceptional divisor $S_1$; 
		if it is furthermore super-rigid, then $S_1$ is contracted to a point.
		On the other hand, if $C \link{S_1,S_2} \Gamma$ is balanced, $X_k \to W$ is of fiber type; if it is super-rigid then $W = \PP^1$.
	\end{remark}
	
	We now prove a partial inverse to Proposition \ref{prop:semiAmp=>Qcan}.
	
	\begin{proposition}\label{prop:Qcan=>potContr}
		Let $\Gamma$ be a configuration of smooth pairwise skew curves $\gamma_i$
		and \mbox{$C \link{S_1,S_2} \Gamma$} be a skew $C$-rigid linkage.
		Denote by $X \to \PP^3$ the blowup along $C$.	
		Then, for any $1 \leq i \leq m$, if $\gamma_i$ is potentially contractible, it is $\QQ$-canonical.
		
		Conversely, if for all $i \in I_1 \sqcup \ldots \sqcup I_{k-1}$ we have that
		\begin{equation}\label{eq:Qcan=>potContr}\tag{$\star$}
			\gamma_i \text{ is $\QQ$-canonical}\quad \text{ and } \quad 4(g_i-1) - (n_2-4)d_i < 0
		\end{equation}
		and either
		\begin{enumerate}
			\item\label{it:Qcan=>potContr1} $C \link{S_1,S_2} \Gamma$ is super-rigid and $\gamma_i$ with $i\in I_k$ also satisfy \eqref{eq:Qcan=>potContr} or 
			\item\label{it:Qcan=>potContr2} for all $i \in I_k$, $\gamma_i$ is $(n_2-4)$-subcanonical, i.e.\ $K_{\gamma_i} \sim (n_2-4)H|_{\gamma_i}$,
		\end{enumerate}
		then the $\gamma_i$ are potentially contractible.
	\end{proposition}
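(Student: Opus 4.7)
The plan is to prove both directions by induction on $a$ where $i \in I_a$, building the requisite SQMs iteratively.

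For the forward direction, the base case $a = 1$ is direct: by Lemma \ref{lem:skewLinkageInters} and Proposition \ref{prop:conesRigidLinkage}, the curve $\gamma_i$ spans the extremal ray $R_1$ of $\NE(X)$, so potential contractibility applied to the identity SQM produces a semiample nef divisor $D$ perpendicular to $R_1$. Restricting $D$ to $S_1$ and then to $\gamma_i$ via adjunction, exactly as in the proof of Proposition \ref{prop:semiAmp=>Qcan}, gives $D|_{\gamma_i} \sim \alpha H|_{\gamma_i} + \beta K_{\gamma_i}$ for suitable integers, and combining this with $\OO_{\gamma_i}(rD) \cong \OO_{\gamma_i}$ (which follows from semiampleness) forces $\QQ$-canonicity. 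For $i \in I_a$ with $a > 1$, the same argument will close provided one exhibits an SQM $X \rmap X_{a-1}$ on which the pushforward of $\gamma_i$ is extremal; such an SQM will be produced by iteratively invoking the converse direction (below) on the curves $\gamma_j$ with $j \in I_1 \sqcup \ldots \sqcup I_{a-1}$, which are $\QQ$-canonical by the inductive hypothesis.

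For the converse direction, one constructs SQMs $X = X_0 \rmap X_1 \rmap \ldots$ by showing at stage $a$ that the extremal ray $R_a$ on $X_{a-1}$ is contractible. For $1 \leq a \leq k-1$, as well as for $a = k$ in case \ref{it:Qcan=>potContr1}, the strategy is to apply Proposition \ref{prop:contractionNeg} to the perpendicular divisor $D_a$: the chain $A \prec D_a \prec S_2 \preceq S_1$ is read off the cone description of Proposition \ref{prop:conesRigidLinkage}; the inequality $S_2 \cdot \gamma_i = e_i < 0$ will hold for $a < k$ by rigidity combined with the strict separation of the partition $I = I_1 \sqcup \ldots \sqcup I_k$, and for $a = k$ in case \ref{it:Qcan=>potContr1} by super-rigidity; the inequality $2g_i - 2 + S_1 \cdot \gamma_i < 0$ becomes, after substituting $S_1 \cdot \gamma_i = 2g_i - 2 - (n_2-4)d_i$, precisely the hypothesis \eqref{eq:Qcan=>potContr}; and the nonvanishing $h^0(\gamma_i, \OO_{\gamma_i}(rD_a)) \neq 0$ follows from $\QQ$-canonicity via the restriction computation of Proposition \ref{prop:semiAmp=>Qcan}. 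In case \ref{it:Qcan=>potContr2}, $(n_2-4)$-subcanonicity of the $\gamma_i$ with $i \in I_k$ forces $e_i = 0$ and, since the linkage is rigid, $n_1 = n_2$, yielding $\OO_{\gamma_i}(S_2) \cong \OO_{\gamma_i}$; one then applies Proposition \ref{prop:contractionZero} to $S_2$ itself, with the required cohomological vanishings to be traced through the preceding SQMs (which are isomorphisms in codimension one) back to explicit computations on $\PP^3$ and on the smooth surface $S_1$. Having established semiampleness of the relevant divisor, Proposition \ref{prop:SQMci} together with Lemma \ref{lem:proportionality} yields the flip $X_{a-1} \rmap X_a$, closing the induction.

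The main obstacle to anticipate lies in case \ref{it:Qcan=>potContr2}: verifying the cohomological vanishings $h^1(X_{k-1}, S_2 - S_1) = 0$ and $h^1(S_1, \OO_{S_1}) = 0$ demanded by Proposition \ref{prop:contractionZero} on the birational model $X_{k-1}$ rather than on $X$ itself. While isomorphism in codimension one should reduce these to computations on $X$, care will be required to track the line bundle classes through the intermediate flips and to verify that the hypotheses on Cartierness and ampleness of the ambient divisors transfer correctly. A secondary subtlety is the inductive dovetailing of the two directions: although the forward direction's inductive step invokes the converse, at each stage only the $\QQ$-canonicity supplied by the inductive hypothesis is needed to trigger the converse's construction, so no logical circularity arises.
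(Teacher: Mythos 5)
Your converse direction is, in substance, the paper's own proof: the same induction over the groups $I_a$, with Proposition \ref{prop:contractionNeg} applied to $D_a$ at each stage (the same computation $S_1\cdot\gamma_j = 2g_j-2-(n_2-4)d_j$ turning the genus condition into \eqref{eq:Qcan=>potContr}, and the same slope-separation argument giving $e_j<0$ strictly for $j\notin I_k$), the flips produced by Lemma \ref{lem:proportionality} plus Proposition \ref{prop:SQMci}, and case \eqref{it:Qcan=>potContr2} finished by Proposition \ref{prop:contractionZero} applied to $D_k=S_2$. Two of your remarks actually sharpen the paper: the observation that rigidity together with $(n_2-4)$-subcanonicity forces $n_1=n_2$ and $e_i=0$ is left implicit there (and is needed for its claim that $\OO_{\gamma_i}(\gamma_i)\isom\OO_{\gamma_i}$), and the difficulty you flag about checking $h^1(\,\cdot\,,S_2-S_1)$ and $h^1(S_1,\OO_{S_1})$ on $X_{k-1}$ rather than on $X$ is real --- the paper simply verifies these on $X$. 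Be aware of one further wrinkle you share with the paper: in case \eqref{it:Qcan=>potContr2} with $k\geq 2$, the components $\gamma_j$, $j\in I_k$, of $S_1\cap S_2$ have $S_2\cdot\gamma_j=0$ and $2g_j-2+S_1\cdot\gamma_j=2g_j-2$, so the hypotheses of Proposition \ref{prop:contractionNeg}, as literally stated, fail for those components at the intermediate stages $a<k$; one must note that $D_a$ is strictly positive on them and rescale to restore non-specialty.

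The genuine gap is in your forward direction for $i\in I_a$ with $a>1$. You build the SQM $X \rmap X_{a-1}$ by invoking the converse direction on the curves $\gamma_j$, $j\in I_1\sqcup\ldots\sqcup I_{a-1}$, "which are $\QQ$-canonical by the inductive hypothesis". But the forward statement assumes nothing about those curves: its only hypothesis is potential contractibility of the single curve $\gamma_i$. Your inductive hypothesis is an implication (potentially contractible $\Rightarrow$ $\QQ$-canonical) whose antecedent you have no way to discharge for the $\gamma_j$, so it does not supply their $\QQ$-canonicity; and even if it did, the converse direction also requires the numerical inequality $4(g_j-1)-(n_2-4)d_j<0$ of \eqref{eq:Qcan=>potContr}, which is not among the forward direction's hypotheses. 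Hence your closing claim that "no logical circularity arises" is unjustified as written. The repair is to not route through the converse at all: in the reading under which this direction is actually used (all $\gamma_i$ potentially contractible, cf.\ Remark \ref{rem:everyCurvePC}), potential contractibility of the curves in $I_1,\ldots,I_{a-1}$, applied on the successive models where their classes are extremal, directly yields contractibility of those rays, and then Lemma \ref{lem:proportionality} and Proposition \ref{prop:SQMci} produce the flips --- no $\QQ$-canonicity and no numerics are needed to build the SQM. Once $\gamma_i$ is extremal on $X_{a-1}$, the restriction computation of Proposition \ref{prop:semiAmp=>Qcan} gives $\QQ$-canonicity, which is the whole content of the paper's one-line treatment of this direction.
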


	\begin{proof}
		The proof that potential contractibility implies $\QQ$-canonicity is similar to that of Proposition \ref{prop:semiAmp=>Qcan}.
		
		The reverse implication is a matter of verifying the assumptions of Propositions \ref{prop:contractionNeg} and \ref{prop:contractionZero}.
		First note that for all $j \in  I_1 \sqcup \ldots \sqcup I_{k-1}$ and $i \in I_k$, by Lemma \ref{lem:skewLinkageInters}, we have $f \prec \gamma_j \prec \gamma_i$, therefore $S_i\cdot \gamma_j < S_i \cdot \gamma_i \leq e_i \leq 0$.
		For any $j \in I_1$ the curve $\gamma_j$ spans an extremal ray of $X$ and	we may calculate that
		\[
		S_1\cdot \gamma_j = \big(S_2 - (n_2 - n_1)H\big)\cdot \gamma_j = e_j - (n_2 - n_1)d_j = 2g_j - 2 - (n_2 - 4)d_j
		\]
		and consequently
		\[
		(2g_j - 2) + S_1\cdot \gamma_j =  4(g_j - 1) - (n_2 - 4)d_j < 0.
		\]
		Finally, similarly to proof of Proposition \ref{prop:semiAmp=>Qcan}, we may calculate that $\QQ$-canonicity of $\gamma_j$ is equivalent to $\OO_{\gamma_j}(D_1) \isom \OO_{\gamma_j}$, which in turn implies that $h^0(\Gamma,D_1) \neq 0$.
		Then, by Proposition \ref{prop:contractionNeg}, $D_1$ is semiample and $\gamma_i$ is contractible.
		For $j \in I_2 \sqcup \ldots \sqcup I_{k-1}$ the argument is the same: we first use Proposition \ref{prop:SQMci} to flip the previous rays and then verify the condition of Proposition \ref{prop:contractionNeg}.
		If we are furthermore in case \eqref{it:Qcan=>potContr1} then the same is true if $j \in I_k$.
		
		Suppose now that we are in case \eqref{it:Qcan=>potContr2} and $i \in I_k$.
		By restricting $\OO_X(S_2)$ first on $S_1$ and then on $\gamma_i$ we obtain that $\OO_{\gamma_i}(S_2) = \OO_{\gamma_i}(\gamma_i)$.
		By the adjunction formula on $S_1$ we have
		\[
		\OO_{\gamma_i}(\gamma_i) = K_{\gamma_i} - K_{S_1}|_{\gamma_i} = K_{\gamma_i} - (n_1-4)H|_{\gamma_i}
		\]
		which by our assumption coincides with $\OO_{\gamma_i}$.
		Therefore $h^0(\Gamma,S_2) \neq 0$.
		Moreover 
		\[
		h^1(X,S_2-S_1) = h^1(X,H) = 0 \quad \text{ and } \quad h^1(S_1,\OO_{S_1})=0,
		\]
		the latter since $S_1$ the strict transform of a smooth hypersurface in $\PP^3$.
		By Proposition \ref{prop:contractionZero}, we conclude that $D_k =  S_2$ is base point free, i.e.\ $\gamma_i$ is contractible. 
	\end{proof}
	
	\subsection{Degeneration phenomena and some interesting examples}
	
	Proposition \ref{prop:Qcan=>potContr} allows us to exhibit examples of nef and big, non-semiample divisors degenerating to a semi-ample ones.
	
	\begin{corollary}\label{cor:degenToSemiample}
		Let $g',d', n_1, n_2$, $g,d$ and $U_{g,d}$ be as in Theorem \ref{thm:mainThmRigid}.
		
		Then, for sufficiently large $n_1, n_2$, there exists a subset $V_{g,d}$ of $\overline{U}_{g,d}$, of positive codimension, so that for any $C \in V_{g,d}$, the blowup $X \to \PP^3$ along $C$ \emph{is} a Mori Dream Space;
		in particular, every nef divisor on $X$ is semiample.
	\end{corollary}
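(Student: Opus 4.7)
The plan is to construct $V_{g,d}$ as a positive-codimension specialization locus inside $\overline{U}_{g,d}$ whose general element arises via super-rigid linkage from a $\QQ$-canonical curve; then Propositions \ref{prop:Qcan=>potContr} and \ref{prop:potContr=>MDS} reverse the failure of Mori dreamness from Theorem \ref{thm:mainThmRigid}.

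First I would invoke Proposition \ref{prop:QcanonicalNonGen} and pick a non-empty closed subset $V'_{g',d'}$ of the $\QQ$-canonical locus $\mathcal{QK}_{g',d'}$, which is automatically of positive codimension in every component of $H_{g',d'}^S$. I then define $V_{g,d}$ as the $(n_1,n_2)$-linked family of $V'_{g',d'}$ in the sense of \eqref{eq:linkedFam}. For $n_1, n_2$ sufficiently large the cohomological hypotheses of Proposition \ref{prop:specialization} become automatic across $H_{g',d'}^S$: the vanishings $h^1(\II_{C'}(n_i))=0$ (consequences of Serre vanishing once $n_i$ is large) give the identity $h^0(\II_{C'}(n_i)) = h^0(\II_{C'_0}(n_i))$ via Riemann--Roch, while the super-rigidity inequality $(n_i-4)d' > 2g'-2$ makes $\OO_{C'}(n_i-4)$ non-special, yielding the identity on $h^0(\OO_{C'}(n_i-4))$. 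Consequently every $C \in V_{g,d}$ is a specialization of a curve in $U_{g,d}$, so that $V_{g,d} \subset \overline{U}_{g,d}$.

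For any such $C$ there is a super-rigid linkage $C \link{S_1,S_2} C'$ with $C'$ $\QQ$-canonical. Viewing $\Gamma = \{C'\}$ as a configuration with a single curve, the inequality $4(g'-1) - (n_2-4)d' < 0$ required by Proposition \ref{prop:Qcan=>potContr}\eqref{it:Qcan=>potContr1} holds for $n_2$ sufficiently large, so $C' \subset X$ is potentially contractible. Proposition \ref{prop:potContr=>MDS} then yields that $X$ is a Mori Dream Space, and in particular every nef divisor on $X$ is semiample by Definition \ref{def:MDS}\eqref{it:MDS2}.

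The most delicate step is verifying the positive codimension of $V_{g,d}$ in $\overline{U}_{g,d}$. This amounts to tracking dimensions through the linkage isomorphism $\ell\colon D_{g',d'}(n_1,n_2) \isom D_{g,d}(n_1,n_2)$ and its projections to the two Hilbert schemes. The uniform vanishings $h^1(\II_{C'}(n_i-4))=0$ and $h^1(\II_C(n_i-4))=0$ ensure that both projections have constant fiber dimension over $V'_{g',d'}$ and $V_{g,d}$ respectively, so that the positive codimension of $V'_{g',d'}$ inside $H_{g',d'}^S$ transfers to positive codimension of $V_{g,d}$ inside the closure of the linked family, that is, inside $\overline{U}_{g,d}$.
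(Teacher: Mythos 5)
Your proposal follows essentially the same route as the paper's own proof: define $V_{g,d}$ as the $(n_1,n_2)$-linked family of the $\QQ$-canonical locus, use Proposition \ref{prop:specialization} (with its cohomological hypotheses holding uniformly for $n_1,n_2$ large) to conclude $V_{g,d} \subset \overline{U}_{g,d}$, and combine Propositions \ref{prop:Qcan=>potContr} and \ref{prop:potContr=>MDS} to get Mori dreamness for every $C \in V_{g,d}$. The paper's proof is terser---it leaves both the hypothesis-checking and the positive-codimension claim implicit---so your additional verifications (uniform $h^1$-vanishing on the primed side and the constant-fiber-dimension count through the Hilbert-flag scheme) are refinements of the same argument rather than a different approach.
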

	
	\begin{proof}
		Define
		\[
		V_{g',d'} = \left\{ C' \in H_{g',d'}^S \,|\, C' \text{ is $\QQ$-canonical }\right\}
		\]
		and take $V_{g,d}$ to be the $(n_1,n_2)$-linked family (see subsection \ref{subsec:linkageAndHilberFlag}).
		Then, for sufficiently large $n_1,n_2$, the conditions of Propositions \ref{prop:specialization} and \ref{prop:Qcan=>potContr} are satisfied.
		On the one hand this implies that any curve in $V_{g,d}$ is the specialization of a curve in $U_{g,d}$, showing that $V_{g,d} \subset \overline{U}_{g,d}$;
		on the other hand for any $C \in V_{g,d}$, the blowup along $C$ is a Mori Dream Space.
	\end{proof}
	
	Finally, by appropriately choosing skew $\QQ$-canonical curves $\gamma_1, \dots, \gamma_k$ and considering the blowup along the linked curve, we may exhibit examples of: 
	\begin{enumerate}
		\item\label{it:interestingEx1} threefolds of Picard rank $2$, whose movable cone has arbitrarily many Mori chambers;
		\item\label{it:interestingEx2} SQMs that flip arbitrarily many curves;
		\item\label{it:interestingEx3} SQMs that flip curves with arbitrarily unbalanced normal bundles, that is $N_{\gamma/X} = \OO_{\gamma}(S_1) \oplus \OO_{\gamma}(S_2)$ with $\deg \OO_{\gamma}(S_2-S_1)$ arbitrarily large.
	\end{enumerate}
	
	Indeed, for \eqref{it:interestingEx1}, choosing the curves $\gamma_i$ so that for all the ratios $\frac{e_i}{d_i}$ of Setup \ref{set:rigidLinkage} are different, implies that all none of the $\gamma_i$ are numerically proportional; 
	in that case $\Mov(X)$ has exactly $k$ Mori chambers (see Figure \ref{fig:Eff(X)}).
	On the other hand, if  the curves are chosen so that the ratios $\frac{e_i}{d_i}$ are all equal, then all the $\gamma_i$ lie on the same ray $R$;
	the SQM flipping $R$, flips $k$ distinct curves, showing \eqref{it:interestingEx2}.
	For example, we may choose the curves $\gamma_i$ to be rational of degrees $d_i$, with $d_i$ all different for \eqref{it:interestingEx1} and all the same for \eqref{it:interestingEx2}.
	
	If $n_1, n_2$ are the degrees of the corresponding linkage $C \link{n_1,n_2} \Gamma$, then for every flipped curve we have $N_{\gamma_i/X} = \OO_{\gamma}(S_1) \oplus \OO_{\gamma}(S_2)$;
	therefore $\deg(\OO_{\gamma}(S_2-S_1)) = (n_2-n_1)d_i$.
	Choosing the difference $n_2 - n_1$ to be arbitrarily large we conclude \eqref{it:interestingEx3}.
	Note that by also including some non $\QQ$-canonical curves to $\Gamma$ we may exhibit these examples on varieties that are not Mori Dream Spaces.

	\bibliography{nonMDSbib}{}
	\bibliographystyle{alpha}   
	
\end{document}